\newtheorem{mythm}{Theorem}
\newtheorem{mylem}{Lemma}
\newtheorem{myprop}{Proposition}
\newtheorem{mycor}{Corollary}
\newtheorem{myrem}{Remark}
\newtheorem{myas}{Assumption}
\newcommand{\rfig}[1]{Fig.\,\ref{#1}} 
\newcommand{\req}[1]{\eqref{#1}}
\newcommand{\rlem}[1]{Lemma\,\ref{#1}}
\newcommand{\rrem}[1]{Remark\,\ref{#1}}
\newcommand{\rsec}[1]{Section\,\ref{#1}}
\newcommand{\ras}[1]{Assumption\,\ref{#1}}
\newcommand{\rprop}[1]{Proposition\,\ref{#1}}
\newcommand{\rthm}[1]{Theorem\,\ref{#1}}
\newcommand{\qedwhite}{\hfill \ensuremath{\Box}}
\newcommand{\ubar}[1]{\underaccent{\bar}{#1}}
\definecolor{forestgreen}{rgb}{0.33,0.61,0.34}
\definecolor{deepmagenta}{rgb}{0.8, 0.0, 0.8}
\definecolor{harvardcrimson}{rgb}{0.79, 0.0, 0.09}
\newcommand{\masaki}[1]{\textcolor{forestgreen}{\bf [Masaki: #1]}}
\newcommand{\add}[1]{\textcolor{black}{#1}}
\definecolor{airforceblue}{rgb}{0.36, 0.54, 0.66}
\journal{Annual Reviews in Control}
\begin{document}

\begin{frontmatter}


\title{Event-Triggered Control for \\ Mitigating SIS Spreading Processes}


\author[label3]{Kazumune Hashimoto\fnref{myfootnote}}
\ead{hashimoto@eei.eng.osaka-u.ac.jp}
\author[label1]{Yuga Onoue\fnref{myfootnote}}
\ead{onoue@hopf.sys.es.osaka-u.ac.jp}
\author[label2]{Masaki Ogura}
\ead{m-ogura@ist.osaka-u.ac.jp}
\author[label1]{Toshimitsu Ushio}
\ead{ushio@sys.es.osaka-u.ac.jp}

\address[label3]{Graduate School of Engineering, Osaka University, Suita, Japan}
\address[label1]{Graduate School of Engineering Science, Osaka University, Toyonaka, Japan}
\address[label2]{Graduate School of Information Science and Technology, Osaka University, Suita, Japan}

\fntext[myfootnote]{These authors are contributed equally to this work.}

\begin{abstract}
In this paper, we investigate the problem of designing event-triggered controllers for containing epidemic processes in complex networks. We focus on a deterministic susceptible-infected-susceptible (SIS) model, which is one of the well-known, fundamental models that capture the epidemic spreading. The event-triggered control is particularly formulated in the context of viral spreading, in which control inputs (e.g., the amount of medical treatments, a level of traffic regulations) for each subpopulation are updated \textit{only when} the fraction of the infected people in the subpopulation exceeds a prescribed threshold. 
We analyze the stability of the proposed event-triggered controller and derive a sufficient condition for a prescribed control objective to be achieved. 
Moreover, we propose a novel emulation-based approach towards the design of the event-triggered controller, and show that the problem of designing the event-triggered controller can be solved in polynomial time using a geometric programming.
We illustrate the effectiveness of the proposed approach through numerical simulations using an air transportation network. 
\end{abstract}

\begin{keyword}
Epidemic processes \sep Event-triggered control \sep Complex networks \sep Geometric programming 


\end{keyword}

\end{frontmatter}


\section{Introduction}

Analysis and control of epidemic {processes} in complex networks {\citep{RevModPhys}} have been studied over the past decades in {several} research {fields} including epidemiology, computer science, social science, and control {engineering}, {with applications in} epidemic spreading of infectious diseases over human contact networks \citep{virusspread}, malware spreading over computer networks \citep{malwareepidemic}, rumor propagation \citep{socialepidemic}, {and} cascading failures or blackouts in electrical networks \citep{cascadingfailure}. {This research trend has been further strengthened by the current} COVID-19 {pandemic}, which {is posing a significant threat to} {humanity} and economy worldwide. 
In recent years, an increasing attention has been paid in the systems and control theory toward the analysis and control of epidemic spreading processes, where the amount of medical treatments or a level of traffic regulations are treated as control inputs to be designed, as in \citep{RevModPhys,preciado2013traffic,preciado2014traffic,epidemiccontrolsurvey}. 
Specifically, 
control strategies are designed  
for the mitigation of the epidemic spreading processes (e.g., asymptotic or exponential convergence towards the disease-free state), based on the analysis of network structure and dynamical systems capturing the spreading processes. In this context, most of the early works consider {feedforward} control strategies, in which suitable control inputs that are applied \textit{constantly} for all the times will be designed 
\citep{preciado2014,han2015,nowzari2017,mai2018}.  
{More recently,  
dynamical and feedback
control strategies have been investigated, in which control inputs are determined to adapt the number of infected people and the time-varying nature of the dynamics \citep{ogura2016b,kohler18,outputfeedback2018,watkins,liu2019,gracy2020,jhohanson}.}

In this paper, we are particularly interested in designing a novel {feedback} control strategy for a deterministic susceptible-infected-susceptible (SIS) model, which is one of the well-known and fundamental models for the epidemic spreading processes (see, e.g., \citep{sistutorial,Mei2017}).  
The deterministic SIS model can address epidemic spreading processes in the following two different contexts. The first one is the \textit{individual}-based context, where the model consists of $n \geq 2$ individuals interacting with each other, and the state of each node represents the probability that the individual is infected. 
The other one is the \textit{metapopulation} context, where the model consists of $n \geq 2$ subpopulations containing a group of individuals, or called \textit{subpopulations}, and the state of each node indicates the fraction of the infected individuals in each subpopulation. This paper focuses on the metapopulation context, as the state of each subpopulation can be measured or estimated in real time 
(see, e.g., \citep{epidemiccontrolsurvey}), so that the state-feedback controller can be reasonably implemented in practice. 

Various dynamical and feedback control strategies for the SIS models have been proposed, see, e.g., \citep{kohler18,liu2019,gracy2020}. 
For example, \citep{kohler18} proposed to employ a model predictive control (MPC), in which control inputs are computed by solving a finite horizon optimal control problem online. The authors in \citep{kohler18} also showed that the asymptotic stability of the disease-free state is guaranteed by the proposed controller. Moreover \citep{liu2019} investigated the instability of the disease-free state under a linear state-feedback controller for a bi-virus epidemic spreading model. In addition, \citep{gracy2020} proposed a method for designing a time-varying controller 
for the asymptotic stabilization of the epidemic processes. 


Feedback control strategies presented in the aforementioned papers \citep{kohler18,liu2019,gracy2020} assume that control inputs are updated \textit{continuously} (for the continuous-time case) or per unit of time (for the discrete-time case). {However, such frequent control updates are not {necessarily} suitable in practice, since a even small fluctuation of the processes forces us to update the control inputs.} For example, the level of traffic regulation, which can be regarded as one of the control inputs for mitigating the epidemic spreading processes, is not necessarily realistic to update continuously because even a small adjustment could require enormous efforts. 
Thus, a more practical approach will be to update the control inputs \textit{only when} they are 
necessary, rather than updating the inputs at every time instants. 
For example, the level of traffic regulation may be preferable to be changed \textcolor{black}{\textit{only when} the fraction of the infected people in each subpopulation increases or decreases by the prescribed thresholds.} 

Motivated by the above {observation}, 
in this paper we propose to employ an \textit{event-triggered} control-based framework \citep{heemels2012a} for {containing} epidemic spreading processes. In the {proposed framework}, 
control inputs for each subpopulation are updated only when the fraction of the infected people in the subpopulation increases or decreases by a given threshold. 
As can be seen in the {current} situation of the COVID-19, employing the event-triggered control for the epidemic processes is reasonable and useful in practice, as many countries have been carrying out their mitigation strategies in an \textit{event-triggered} manner. 
In Japan, for example, each prefecture has created its own cautionary levels according to the number of COVID-19 cases, in which, for each cautionary level, the contents of requests to prefectural residents are provided (see, e.g., \citep{corona}). In other words, each prefecture dynamically updates its own mitigation strategies \textit{only when} the number of the COVID-19 cases increases or decreases to some extent. Therefore, the event-triggered control-based framework {could serve} as a useful {decision-making system} to {inform us of when to update our mitigation strategies in response to the dynamic change in the number of infected people} {in subpopulations}.

The main contribution of this paper is to formulate {a framework for} the event-triggered {containment of} the deterministic SIS model. We furthermore formulate the event-triggered control in a \textit{distributed} manner{, in which} control inputs for each subpopulation (or node in the graph) are updated based on the well-designed {local event-triggered} conditions. \textcolor{black}{The SIS model may not be appropriate to concretely capture the dynamical behavior for the current COVID-19 pandemic. Nevertheless, this paper can be viewed as a first step towards a rigorous, mathematical formulation of the event-triggered control for mitigating the epidemic spreading. In particular, we provide both theoretical analysis on stability and design procedure of the event-triggered control in a computationally efficient way.} 
Specifically, {the technical contribution}  of this paper {is twofold}: 
\begin{enumerate}[\textrm{(}i\textrm{)}]
\item  We derive a sufficient condition {for the event-triggered controller to achieve} the prescribed control objective. We further show that the condition can be checked by solving a convex program; for details, see \rsec{stabilitysec}. 
\item Based on the {analysis} given in (i), we {then} propose a novel framework to \textit{design} the event-triggered control for mitigating the SIS spreading processes. {In particular}, we propose to leverage an \textit{emulation-based} approach (see, e.g., \citep{heemels2013a,heemels2013b}) as {a} two-step procedure to design parameters {of} the event-triggered control. As we will see later, the main advantage of employing the emulation-based approach is that the problem of designing the event-triggered control can be formulated by a \textit{geometric programming} {(see, e.g., \citep{boyd,Ogura2019c})}, which can be translated into the convex program and thus can be solved in polynomial time; for details, see \rsec{eventtrigdesignsec}. 
\end{enumerate}

Our approach is related to applications of event-triggered control for \textit{multi-agent systems}, (see, e.g., \citep{dimos2012a,dimos2013a,fan2013,nowzari2019}). 
Note that our {result} 
differs from these previous results in terms of both analysis and design in the following {aspects}. While most of the previous works consider \textit{linear} 
multi-agent systems with single or double integrator dynamics and the control objective is to {asymptotically} achieve {a} {consensus}, {we study} the \textit{non-linear} {dynamical system arising from the} SIS model and{, furthermore,} the control objective is not achieving the consensus but to {asymptotically} suppress the fraction of the infected people {below} prescribed thresholds. As will be seen in \rsec{stabilitysec}, this leads to the stability analysis of an event-triggered controller for 
{\emph{positive}} and \textit{quadratic} dynamical systems, which has not been {fully} investigated in the literature. The design procedure {presented in this paper} is the {emulation-based design using a geometric programming, which} also {differs} from the {ones in the aforementioned} works.

The remainder of this paper is organized as follows. 
In Section~2, we describe dynamics of the SIS model and the control objective to be achieved in this paper. In Section~3, we describe the details of {the proposed} event-triggered control for the SIS model. In Section~4, we investigate the stability that derives a sufficient condition for achieving the control objective under the event-triggered controller. 
In Section~5, we provide an emulation-based approach towards the design of the event-triggered controller. In Section~6, numerical simulations are given to illustrate the effectiveness of the proposed approach. Finally, conclusions and future works are given in Section~7.  

\textit{(Notation and convention):} Let $\mathbb{R}$, $\mathbb{R}_{>0}$, $\mathbb{N}$ denote the set of real numbers, positive real numbers, and non-negative integers, respectively. 
Let $0$ denote the vector or matrix whose elements are all $0$. Let $I_n$ and $\mathsf{1}_n$ denote the $n \times n$ identity matrix and the $n$-dimensional vector whose elements are all $1$. 
The transpose of vectors and matrices are denoted by $(\cdot)^{\mathsf{T}}$. 
For any real vector ${x} = [x_1, x_2, \ldots, x_n]^\mathsf{T} \in \mathbb{R}^n$, 
the Euclidean norm and the $\ell_1$ norm are denoted by $\|x\|$ and $\|x\|_1$, respectively (i.e., $\|x\|=\sqrt{x^2 _1 + x^2 _2 + \cdots + x^2_n}$ and $\|x\|_1 = |x_1| + |x_2| + \cdots + |x_n|$).
\textcolor{black}{Moreover, let ${\rm diag}(x)$ denote the $n \times n$ diagonal matrix} whose $i$th diagonal 
equals $x_i$. 
In addition, denote by ${\rm supp}({x})\subseteq \{1,\ldots, n\}$ the 
support of $x$,
i.e., we define the set ${\rm supp}({x})$ by
\begin{equation}
{{\rm supp}({x}) = \{  i\in \{1,\ldots, n\} \mid x_i\neq 0 \}.} \nonumber
\end{equation}
For any two real vectors $x = [x_1,\ldots, x_n] \in \mathbb{R}^n$, $y = [y_1,\ldots, y_n] \in \mathbb{R}^n$, we write $x\leq y$ if and only if $x_i \leq y_i$ for all $i \in \{1, \ldots, n\}$. 
Given a set ${\cal N}$, we denote by $|{\cal N}|$ the cardinality of ${\cal N}$.

A directed graph is defined as the pair $\mathcal{G}=(\mathcal{V}, \mathcal{E})$, where $\mathcal{V}= \{1,\ldots, n\}$ is the set of nodes and $\mathcal{E}\subseteq \mathcal{V}\times \mathcal{V}$ is the set of edges. 
The set of \textit{out}-neighbors of node $i$ is denoted by $\mathcal{N}_i^{\rm out}$, i.e., $\mathcal{N}_i^{\rm out} = \{j\in \mathcal{V}:(i, j)\in\mathcal{E}\}$. Similarly, the set of \textit{in}-neighbors of node $i$ (including node $i$ itself) is denoted by $\mathcal{N}_i^{\rm in}$, i.e., $\mathcal{N}^{\rm in} _i = \{j \in \mathcal{V} : (j ,i) \in \mathcal{E}\}$. 
\textcolor{black}{Note that, from the definition, if a certain node includes the self-loop, both the in- and the out-neighbors include the node itself, i.e., if $(i,i) \in \mathcal{E}$, then $i \in \mathcal{N}_i^{\rm in}$ and $i \in \mathcal{N}_i^{\rm out}$.}

Next, we shall review some basic concepts of the geometric programming \citep{boyd}. 
Let the positive variables be given by $y = [y_1,\ldots,y_n]^\mathsf{T} \in \mathbb{R}^n _{>0}$. 
A function $g : \mathbb{R}^n _{>0} \rightarrow \mathbb{R}_{>0}$ is called a \textit{monomial} if it is given of the form: $g(y)= cy_1^{a_1}\cdots y_n^{a_n}$, where $c \geq 0$ and $a_1\ldots,a_n\in \mathbb{R}$ are given constants. 
Moreover, a function $f : \mathbb{R}^n _{>0} \rightarrow \mathbb{R}_{>0}$ is called a \textit{posynomial} if it is given of the form: $f(y)= \sum_{i=1}^n c_iy_1^{a_{1i}}\cdots y_n^{a_{ni}}$, where $c_i \geq 0$ and $a_{1i}, \ldots, a_{ni} \in \mathbb{R}$ are given constants. 
Given a set of posynomial functions $f_0,f_1,\ldots,f_k : \mathbb{R}_{>0}^n \rightarrow \mathbb{R}$ and a set of monomials $g_1,\ldots,g_q$, a \textit{geometric program} is an optimization problem given of the form:
\begin{align}
  \rm{minimize}&~~~f_0(y)\nonumber\\
  \rm{subject~to}&~~~f_i(y)\le 1,~i=1,\ldots,k\nonumber\\
  &~~~g_i(y)=1,~i=1,\ldots,q\nonumber
\end{align}
Although a geometric programming is not a convex program by itself, it can be converted into a convex problem with logarithmic changes of variables and logarithmic transformation of the objective and the constraint functions, (see, e.g., \citep{boyd2007tutorial}). 
Hence, the geometric program can be solved efficiently in polynomial time.



\section{System description and control objective}\label{dynamicssec}

In this section, we describe the dynamics of the epidemic spreading and the control objective in this paper. 
\subsection{Deterministic SIS model} 

As previously stated in the Introduction, we adopt a deterministic susceptible-infected-susceptible (SIS) model in terms of the \textit{metapopulation} context (see, e,g., \citep{epidemiccontrolsurvey}). 
Consider a network that consists of $n$ ($n\geq 2$) groups of individuals, which are labeled by $\{1, \ldots, n\}$. \textcolor{black}{Individuals in each group are affected by those in their own group or those in the neighboring groups.} The neighbor relationships among the groups are captured by a directed graph $\mathcal{G} = (\mathcal{V}, \mathcal{E})$, where $\mathcal{V} = \{1, \ldots, n \}$ is the set of nodes and $\mathcal{E} \subseteq \mathcal{V}\times \mathcal{V}$ is the set of edges. If $(i, j) \in \mathcal{E}$, it means that the node $i$ is affected by the node $j$. \textcolor{black}{Since individuals can be affected by those in their own group, the graph has the self-loops at all nodes, i.e., $(i, i) \in \mathcal{E}$ for all $i \in \mathcal{V}$. If $(i, j) \in \mathcal{E}$ with $i \neq j$, it means that there is a possibility of contact from individuals in node $i$ to those in node $j$.} 

In the (metapopulation) SIS model, each node has a $[0, 1]$-valued state variable representing the fraction of \emph{infected} individuals in the node.
We let the state of node $i$ at time $t \geq 0$ 
denoted by 
$x_i (t) $. Under this notation, the scalar $1-x_i(t)\in [0, 1]$ represents the fraction of individuals in node $i$ that are not infected, which we call \textit{susceptible} subpopulation. The dynamics of the state variable of node $i$ in the SIS model is then expressed as follows: 
\begin{align}\label{dynamics}
\dot{x}_i (t) = - \delta_i x_i(t) + ( 1- x_i(t)) \sum_{j\in \mathcal{N}^{\rm in}_i} \beta_{ji} x_j(t),\quad t\geq 0,
\end{align}
where $\delta_i >0$ and $\beta_{ji} >0$ ($j \in \mathcal{N}^{\rm in} _i$) are called the recovery and the infection rates, respectively.
\textcolor{black}{Note that $i \in \mathcal{N}^{\rm in}_i$ for all $i \in \mathcal{V}$, since every node has the self-loop.}
As shown in \req{dynamics}, the epidemic spreading in the 
SIS model is captured by the following two processes: (a) \textit{infected} individuals in node $i$ recover from infection according to the recovery rate $\delta_i$, and 
\textcolor{black}{(b) \textit{susceptible} individuals in node $i$ are infected either from the node $i$ itself according to the infection rate $\beta_{ii}$ or other nodes having their edges to $i$ according to the infection rates $\beta_{ji}$, $j \in \mathcal{N}^{\rm in} _i \backslash \{i\}$.}

In this paper, it is supposed that we can \textit{dynamically control} the recovery and the infection rates (see, e.g., \citep{kohler18}). 
For example, the recovery rate can be controlled by increasing or decreasing the amount of medical resources and treatments. On the other hand, the infection rates from the neighbors can be controlled by traffic regulations (e.g., decreasing the number of flights) or a closure of facilities such as the sightseeing places. This {consideration} 
allows us to replace the constants $\delta_i$ and $\beta_{ji}$ in \eqref{dynamics} with the time-dependent functions $\ubar{\delta}_i + u_i(t)$ and $\bar \beta_{ji}- v_{ji}(t)$, respectively, where $\ubar{\delta}_i >0$ and $\bar \beta_{ji} >0$ ($j\in\mathcal{N}_i^{\rm in}$) {represent} the natural (or, baseline) recovery and infection rates before intervention, whereas the time-dependent scalars $u_i(t)$ and $v_{ji}(t)$ ($j\in\mathcal{N}_i^{\rm in}$) for $t \geq 0$ {represent} the {effect from applying} control inputs {and are assumed to satisfy} $u_i(t) \geq 0$ and $v_{ji} (t) \in [0, \bar \beta_{ji}]$ for all $t \geq 0$. 
Then, the {original} SIS dynamics {\eqref{dynamics}} is {rewritten} as follows: 
\begin{align}\label{controldynamics}
\dot{x}_i(t) = - (\ubar{\delta}_i + u_i(t))  x_i(t) + ( 1- x_i(t)) \sum_{j\in \mathcal{N}^{\rm in} _i}(\bar \beta_{ji}- v_{ji}(t)) x_j(t). 
\end{align}
The model \req{controldynamics} can be {further} written in a {vectorial} form as 
\begin{equation}
\dot{ {x}}(t)=-(\ubar{D}+U(t)) {x}(t) + (I_n-X(t))(\bar{B}-V(t)) {x}(t)
\label{dynamics3}
\end{equation}
where ${x}=[x_1,\ldots,x_n]^\mathsf{T}$  {is the state vector and the $n\times n$ matrices $\ubar{D}$, $\bar B$, $X(t)$, $U(t)$, and $V(t)$ are defined by}
\begin{equation}
\begin{aligned}
    \ubar{D}&=\rm{diag}(\ubar{\delta}), \\
    \bar{B} & =[\bar \beta_{ji}]_{i,j}, \\
    X(t)&={\rm diag}(x(t)), \\
    U(t)&= {\rm diag}(u(t)), \\ 
    V(t)&=[v_{ji}(t)]_{i,j}, \\ 
\end{aligned}
\end{equation}
by using the vectors $\ubar{\delta} = [\ubar{\delta}_1, \ldots, \ubar{\delta}_n]^\mathsf{T}$ and $u(t) = [u_1(t), \ldots, u_n(t)]^\mathsf{T}$. 

\subsection{Control objective}\label{ControlObjective}

In this paper, we consider the following control objective: for every {initial state} $x (0) \in [0, 1]^n$, there exists $t' \geq 0$ such that the state trajectory satisfies 
\begin{align}
{w}_m ^\mathsf{T} {x}(t) \leq \bar{d}_m,\ \forall t \geq t',\ \forall m\in \{1,\ldots,M\}, 
\label{objective}
\end{align}
where $M \in \mathbb{N}_{>0}$ is the number of {given} control objectives, {${w}_1, \dotsc, w_M \in \{0,1\}^n$} are given vectors, and {$\bar{d}_1, \dotsc, \bar d_M \geq 0$} are given thresholds. Using \req{objective}, we can express various control objectives. \textcolor{black}{For example, suppose that we would like to stabilize $x_i$ below the threshold $\bar{x}_i \geq 0$ for all $i\in \{1,\ldots, n\}$ in finite time.} {This} control objective can be expressed by \req{objective} with $M = n$, $\bar{d}_m = \bar{x}_m$ for all $m \in \{1, \ldots, M\}$, and {$w_m$ ($m \in \{1, \ldots, M\}$) being the $m$th canonical basis of $\mathbb{R}^n$}. 
For another example, suppose that we divide the set of all the nodes into $M$ groups, i.e., 
\begin{align}\label{groups}
{\cal V}_1, \ldots, {\cal V}_M \subseteq \mathcal{V} 
\end{align}
with ${\cal V}_1 \cup {\cal V}_2 \cdots \cup {\cal V}_M = \mathcal{V}$ (for the illustration, see \rfig{groupgraph}), and 
we would like to stabilize the average of the states in each group ${\cal V}_m$ below the threshold $\bar{x}_m \geq 0$ in finite time, i.e., 
\begin{align}\label{objectivegroup}
\frac{1}{|{\cal V}_m|} {\sum_{i\in {\cal V}_m} x_i (t)}\leq \bar{x}_m,\ \forall t \geq t', \ \forall m\in \{1,\ldots,M\}, 
\end{align}
for some $t' \geq 0$. 
{This} control objective can be expressed by \req{objective} {if we choose} $\bar{d}_m = |{\cal V}_m| \bar{x}_m$ and {define $w_m$ by}
\begin{equation}
    [w_m]_i = \begin{cases}
    1,&\mbox{if $i \in {\cal V}_m$}, 
    \\
    0,&\mbox{otherwise.}
    \end{cases}
\end{equation}

\begin{figure}[tbp]
  \centering
  \includegraphics[width = 0.4\linewidth]{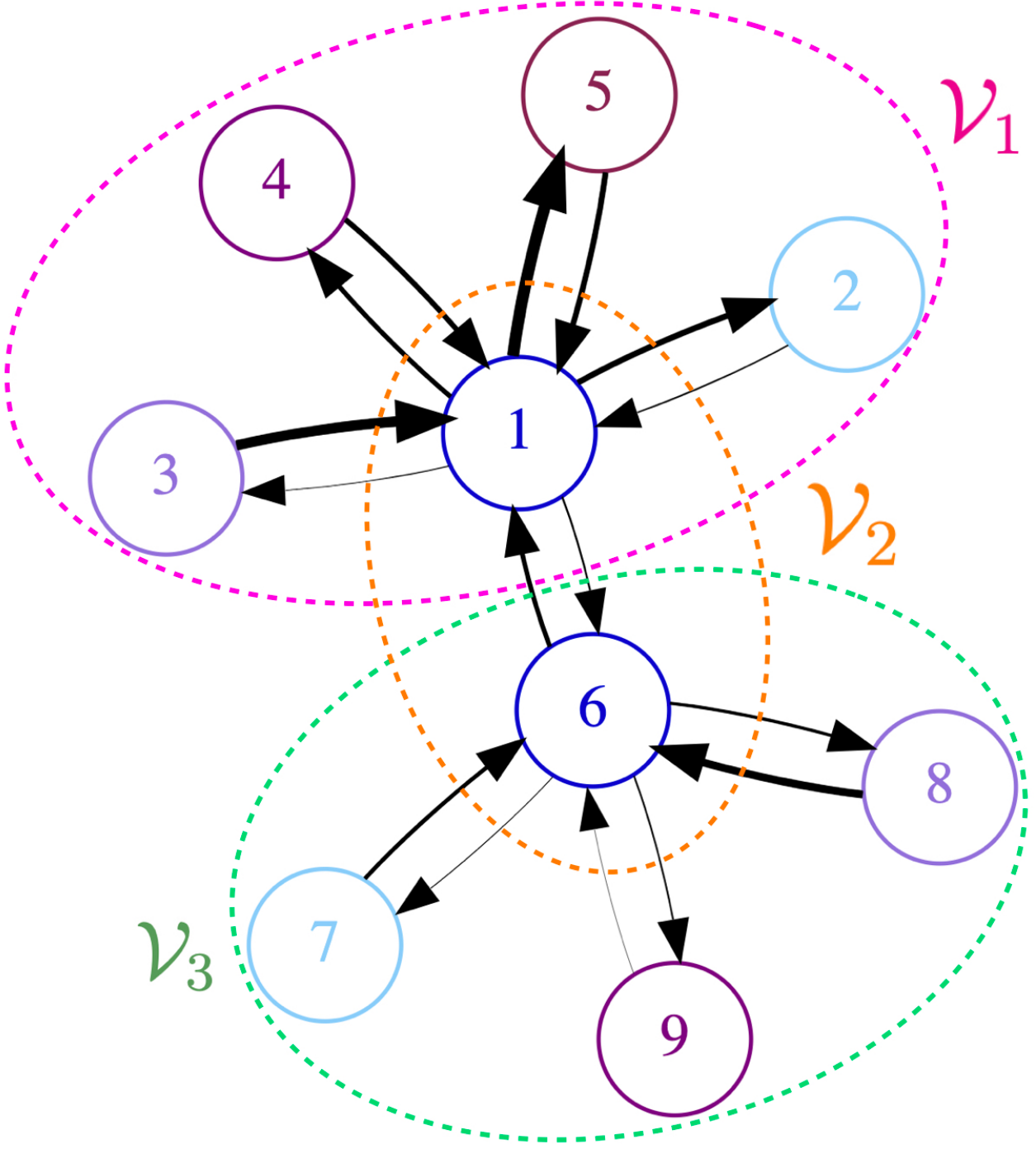}
  \caption{An example of the groups ${\cal V}_1, \ldots, {\cal V}_M$ defined in \req{groups}. The figure considers the case of $n=9$ and $M=3$.}
  \label{groupgraph}
\end{figure}

{We can confirm} that the control objective \req{objective} includes the case of achieving {asymptotic stabilization of the origin} by setting $M=n$, $\bar{d}_m = 0$ for all $m\in \{1, \ldots, M\}$, and {letting $w_m$ ($m\in \{1, \ldots, M\}$) be the $m$th vector in the canonical basis of $\mathbb{R}^m$}. Hence, \req{objective} provides the control objective in a more flexible way than the asymptotic stabilization of the origin. 

\section{Event-triggered control} \label{eventtrigsec}

As previously described in the Introduction, {conventional feedback} control strategies for the SIS models assume that the control inputs {can be} updated \textit{continuously}, i.e., the {amount of} medical resources and the {qualitative degree of} traffic regulations must be changed continuously or even per unit of time. 
However, such frequent control updates are not {necessarily} suitable in practice, since even a small fluctuation of the states (fraction of infected individuals) forces us to update the control inputs. Hence, a more suitable approach would be to update the control inputs \textit{only when} they are needed instead of continuously, i.e., the amount of medical resources and the traffic regulations are changed \textcolor{black}{only when the fraction of infected individuals increases or decreases by the prescribed thresholds.} This leads us to the usage of an \textit{event-triggered control} \citep{heemels2012a}, in which the control inputs are updated only when they are needed according to a well-designed event-triggered condition (as detailed below). 

To formulate the {proposed} event-triggered control {strategy}, let $t^i _0, t^i _1, t^i _2, \ldots$ with $t^i _0 < t^i _1 < t^i _2 \cdots$ be the \textit{triggering} time instants when control inputs {for} the recovery rate for node $i$, $u_i (t)$, and {the infection rates from node $i$ to its \textit{out}-neighbors (including the node $i$ itself), $v_{ij}(t)$, $(j \in \mathcal{N}^{\rm out}_i)$}, are updated. 
For simplicity, it is assumed that the initial updating time instants for all the nodes, i.e., $t^i _0$, $i\in \mathcal{V}$, are given. For any $t\in [t^i _0, \infty)$, node $i$ evaluates the following event-triggering condition 
\begin{align}\label{evcond}
|e_i (t)| < \sigma_i x_i(t) + \eta_i, 
\end{align}
where $e_i (t) \in \mathbb{R}$ denotes the error between $x_i (t)$ and the state at the latest triggering time before $t$, i.e., 
\begin{align} 
e_i (t) = x_i(t) - x_i (t^i_{\ell_t}),\ {\rm with}\ \ t^i_{\ell_t} = \underset{\ell' \in \mathbb{N}}{\max} \{ t^i _{\ell'} \geq 0 : t^i _{\ell'} < t \},
\end{align}
where 
the scalars $\sigma_i$, $\eta_i$, $i \in \mathcal{V}$ are the parameters to characterize the event-triggered condition. 
The parameters $\sigma_i$, $\eta_i$, $i \in {\cal V}$ are called the \textit{event-triggering gains}, which will be designed later in this paper. It is assumed that the event-triggering gains are chosen such that 
\begin{align}\label{conditioneventtrig}
\sigma_i \in (0, 1), \ \ \eta_i \in (0, 1),
\end{align}
for all $i\in {\cal V}$. If the condition \req{evcond} is satisfied, then node $i$ does \textit{not} update the control inputs, i.e., $t \neq t^i_{\ell+1}$. On the other hand, if \req{evcond} is violated, then node $i$ updates the control inputs, i.e., $t = t^i _{\ell+1}$. 
More specifically, the triggering time instants are given as follows: 
\begin{align}\label{triggeringtime}
t^i _{\ell+1} = {\inf} \left \{ t >t^i _{\ell}\ : \ |e_i (t)| \geq \sigma_i x_i (t) + \eta_i \right \}
\end{align}
for all $\ell \in \mathbb{N}$. 

\textcolor{black}{Our choice of the event-triggering condition in \req{evcond} (as well as the triggering time instants in \req{triggeringtime}) is motivated as follows. 
Intuitively, the term $\sigma_i x_i (t)$ in the right hand side of \req{evcond} becomes more dominant than $\eta_i$ when $x_i(t)$ is large, and $\eta_i$ is more dominant than $\sigma_i x_i (t)$ when $x_i(t)$ is very small. 
For example, when the state is very small, the control inputs are no more updated unless the error $|x_i(t) - x_i (t^i_{\ell_t})|$ exceeds $\eta_i$. In particular, when the state is decreasing and eventually satisfies $0< x_i(t) \leq \eta_i$ for all $t \geq {t}'$ (for some $t'$), the control inputs are no more updated after $t'$ since the error $|x_i(t) - x_i (t^i_{\ell_t})|$ does not exceed $\eta_i$ for all $t \geq {t}'$. Therefore, by using the event-triggering condition \req{evcond}, it can be expected that the frequency of the control updates becomes less and less as the state gets smaller and smaller.}

\begin{myrem}
\normalfont
It is necessary for the event-triggering gains $\eta_i$ to be designed as $\eta_i >0$ for all $i \in \mathcal{V}$ in our problem set-up in order to guarantee that the inter-event times are always positive, i.e., avoid the Zeno behavior or satisfy an event-separation property (see, e.g., \citep{heemels2012a,heemels2014a}). If we set $\eta_i = 0$, we cannot guarantee that the inter-event times are always positive due to the effect of the term $(1- x_i(t)) \sum_{j\in \mathcal{N}^{\rm in} _i}(\bar \beta_{ji}- v_{ji}(t)) x_j(t)$ in \req{controldynamics}; for certain values of this term, the inter-event times can eventually become zero in finite time, (see, e.g., \citep{heemels2014a}). 
If $\eta_i > 0$ for all $i\in\mathcal{V}$, it follows that, for each $t^i _\ell$, the next triggering time $t^i _{\ell+1}$ is given at least after the absolute error $|e_i (t)| = |x_i (t) - x_i(t^i _\ell)|$ reaches $\eta_i$. 
Since $x_i (t)$ is continuous for all $t$, there always exists a $\Delta >0$ such that the event-triggered condition \req{evcond} is satisfied for all $t \in [t^i _\ell, t^i _\ell + \Delta]$, and so the inter-event times are positive for all the times. \qedwhite
\end{myrem}

{For each node $i \in {\cal V}$, the} control inputs are updated according to the following linear state feedback controller: 
\begin{align}
u_i (t) &= k_i x_i (t^i _\ell),\  \forall t \in [t^i _\ell, t^i _{\ell+1}), \label{ui} \\
v_{ij} (t) &= l_{ij} x_i (t^i _\ell)\ \ \forall t \in [t^i _\ell, t^i _{\ell+1}), \ j \in \mathcal{N}^{\rm out} _i, \label{vij}
\end{align}
for all $\ell \in \mathbb{N}$, where $k_i$, $l_{ij}$, $i\in \mathcal{V}$, $j \in \mathcal{N}^{\rm out} _i$ are parameters to characterize the control strategy. 
The parameters $k_i$, $l_{ij}$, $i\in \mathcal{V}$, $j \in \mathcal{N}^{\rm out} _i$ are called the \textit{control gains}, which will be designed together with the event-triggering gains later in this paper. 
It is assumed that the control gains  must be chosen such that 
\begin{align}
k_i \in (0, \bar{k}_i],\ l_{ij} \in (0, \bar{l}_{ij}], \label{constraintcontrolparameters}
\end{align}
for all $i\in\mathcal{V}$ and $j \in {\cal N}^{\rm out} _i$, where $\bar{k}_i >0$ and $\bar{l}_{ij} \in (0, \bar \beta_{ij}]$ ($i\in\mathcal{V}$, $j \in {\cal N}^{\rm out} _i$) are given upper bounds for the control gains of $k_i$ and $l_{ij}$, respectively. 

The following {proposition establishes the invariance of the region $[0, 1]^n$ under the event-triggered controller \req{ui}, \req{vij}.}
\begin{myprop}\label{invariantprop}
\normalfont
Consider the SIS model \req{controldynamics} and the event-triggered controller \req{ui}, \req{vij}, in which the control and the event-triggering gains are, respectively, chosen such that \req{conditioneventtrig} and \req{constraintcontrolparameters} are satisfied for all $i\in\mathcal{V}$ and $j \in {\cal N}^{\rm out} _i$. 
Then, if $x (0) \in [0, 1]^n$, the state trajectory satisfies $x(t) \in [0, 1]^n$ for all $t \geq 0$. \qedwhite 
\end{myprop}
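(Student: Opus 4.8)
The plan is to show that the boundary of the box $[0,1]^n$ is never crossed by a trajectory starting inside it, by examining the sign of $\dot x_i(t)$ on each face of the box. Since the closed-loop vector field is continuous (indeed, between consecutive triggering times the control inputs $u_i$, $v_{ij}$ are constants, so on each such interval the right-hand side of \req{controldynamics} is a smooth function of $x$), it suffices to verify the standard ``subtangentiality'' conditions: whenever $x_i(t) = 0$ we need $\dot x_i(t) \ge 0$, and whenever $x_i(t) = 1$ we need $\dot x_i(t) \le 0$, for every $i \in \mathcal V$, \emph{provided} the other coordinates $x_j$ remain in $[0,1]$. A clean way to package this, without invoking a general Nagumo-type theorem, is to fix an arbitrary $T > 0$ and argue on a maximal subinterval $[0, T^\ast) \subseteq [0, T]$ on which $x(t) \in [0,1]^n$; the goal is to prove $T^\ast = T$.

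First I would establish the lower barrier. On any triggering interval $[t^i_\ell, t^i_{\ell+1})$ the control gives $u_i(t) = k_i x_i(t^i_\ell)$, so when $x_i(t) = 0$ the dynamics \req{controldynamics} reduce to
\begin{align}
\dot x_i(t) = (1 - 0)\sum_{j \in \mathcal N^{\rm in}_i} (\bar\beta_{ji} - v_{ji}(t))\, x_j(t) \ge 0, \nonumber
\end{align}
because $\bar\beta_{ji} - v_{ji}(t) \ge 0$ (this is exactly the constraint $v_{ji}(t) \in [0,\bar\beta_{ji}]$, which is guaranteed here since $v_{ji}(t) = l_{ji} x_j(t^j_{\ell'}) \le \bar l_{ji} \cdot 1 \le \bar\beta_{ji}$ by \req{constraintcontrolparameters} together with $x_j \in [0,1]$) and $x_j(t) \ge 0$ for all $j$ on the interval in question. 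Hence $x_i$ cannot become negative. For the upper barrier, when $x_i(t) = 1$ the term $(1 - x_i(t))\sum_{j}(\cdots)$ vanishes identically, leaving
\begin{align}
\dot x_i(t) = -(\ubar\delta_i + u_i(t))\cdot 1 = -(\ubar\delta_i + k_i x_i(t^i_\ell)) < 0, \nonumber
\end{align}
since $\ubar\delta_i > 0$ and $k_i > 0$, $x_i(t^i_\ell) \ge 0$. So $x_i$ cannot exceed $1$ either.

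Combining the two, at any point of $\partial [0,1]^n$ reachable along the trajectory, the vector field points into the box (weakly), so $T^\ast$ cannot be a genuine exit time: if $x(T^\ast)$ had some coordinate equal to $0$ or $1$, the sign conditions just derived would force that coordinate back toward the interior, contradicting maximality of $T^\ast$; and if $x(T^\ast)$ were interior, continuity would let us extend past $T^\ast$. Either way $T^\ast = T$, and since $T > 0$ was arbitrary we conclude $x(t) \in [0,1]^n$ for all $t \ge 0$. One subtlety worth a sentence in the writeup is that the vector field is only piecewise-smooth (it has jumps at triggering instants $t^i_\ell$), but this is harmless: within each half-open interval $[t^i_\ell, t^i_{\ell+1})$ the argument above applies verbatim, and at a triggering instant the state $x$ itself is continuous, so invariance is preserved across the junction. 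The main (and only real) obstacle is bookkeeping: making sure that at the moment $x_i$ touches a face, the values $x_j(t^j_{\ell'})$ appearing inside the controls $u_i$, $v_{ji}$ are themselves known to lie in $[0,1]$ — but this follows because those are past states on $[0, T^\ast)$, where the induction hypothesis $x(t) \in [0,1]^n$ holds.
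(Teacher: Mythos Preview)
Your proposal is correct and follows essentially the same approach as the paper: both verify the subtangentiality (Nagumo-type) conditions on the faces $x_i=0$ and $x_i=1$ of the cube, showing $\dot x_i \ge 0$ and $\dot x_i \le 0$ respectively. The paper packages this via outer normal vectors $o_{1,i}$, $o_{2,i}$ and cites the Lajmanovich--Yorke invariance argument, whereas you spell out an explicit maximal-interval argument; your treatment is actually a bit more careful than the paper's in flagging the bootstrapping issue (that $u_i$, $v_{ji}$ depend on past states $x_j(t^j_{\ell'})$, which must themselves lie in $[0,1]$), a circularity the paper glosses over.
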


\begin{proof}
Since we apply the event-triggered controller \req{ui}, \req{vij}, where the control gains  are chosen to satisfy \req{constraintcontrolparameters} for all $i\in\mathcal{V}$ and $j \in {\cal N}^{\rm out} _i$, it follows that the control inputs are piecewise continuous in $t$ and satisfy $u_i (t) \in [0, \bar{k}_i]$ and $v_{ij}(t) \in [0, \bar{\beta}_{ij}]$ for all $t \geq 0$. 
Let 
\begin{align}
f(x, \mu) = -(\ubar{D}+U) {x} + (I_n-X)(\bar{B}-V) {x}
\end{align}
with $\mu$ being the collection of all the control inputs, i.e., $\mu = \left [u_i, v_{ij}, i \in \mathcal{V}, \right.$ $\left. j \in {\cal N}^{\rm out} _{i} \right]^\mathsf{T}$. 
Moreover, let $\Omega = [0, 1]^n$. Since $\Omega$ is bounded and $f(x, \mu)$ is smooth in $x$, there exists an $L>0$ such that $|\partial f/\partial x| \leq L$ for all $x \in \Omega$, $u_i \in[0, \bar{k}_i]$, and $v_{ij} \in [0, \bar{\beta}_{ij}]$, satisfying the Lipschitz condition in $\Omega$ \citep{khalil}. Hence, the solution of $\dot{x} = f(x, \mu)$ exists and unique in $\Omega$. The fact that the state trajectory remains in $\Omega$ for all the times can be shown as follows. 
Let $\partial \Omega \subset \Omega$ be the boundary of $\Omega$ and let $\partial \Omega_{1, i}, \partial \Omega_{2, i} \subset \partial \Omega$ for all $i \in \mathcal{V}$ given by 
\begin{align}
&\partial \Omega_{1, i} = \{x \in [0, 1]^n : x_i = 0 \},\\ 
&\partial \Omega_{2, i} = \{x \in [0, 1]^n : x_i = 1 \}.
\end{align}
Note that the union of $\partial \Omega_{1, i}$ and $\partial \Omega_{2, i}$ for all $i \in \mathcal{V}$ comprises $\partial \Omega$. 
In addition, let $o_{1, i}  \in \{-1, 0\}$ and $o_{2, i} \in \{0, 1\}^n$ for $i \in \mathcal{V}$ be the outer normal vectors with respect to $\partial \Omega_{1, i}$ and $\partial \Omega_{2,i}$, respectively, i.e., $o_{1,i}$ (resp. $o_{2, i}$) is the vector whose $i$-th element is $-1$ (resp. $1$) and $0$ otherwise. Then, for all $x \in \partial \Omega_{1, i}$, $u_i \in [0, \bar{k}_i]$, and $v_{ji} \in [0, \bar \beta_{ji}]$, we obtain 
\begin{align}
&o_{1, i}^\mathsf{T}\ f (x, \mu) = -\sum_{j\in \mathcal{N}^{\rm in} _i}(\bar \beta_{ji}- v_{ji}) x_j \leq 0. \label{vectorfield1} 
\end{align}
In addition, for all $x \in \partial \Omega_{2, i}$, $u_i \in [0, \bar{k}_i]$, and $v_{ji} \in [0, \bar \beta_{ji}]$, we obtain 
\begin{align}
&o_{2, i}^\mathsf{T}\ f (x, \mu) =  - (\ubar{\delta}_i + u_i)  x_i \leq 0. \label{vectorfield2}
\end{align}
\req{vectorfield1} and \req{vectorfield2} imply that, for every $x$ on the boundary of $\Omega$ and for every $u_i \in [0, \bar{k}_i]$, $v_{ji} \in [0, \bar \beta_{ji}]$, $i\in \mathcal{V}$ and  $j \in {\cal N}^{\rm in} _i$, the vector field $f(x ,\mu)$ is tangential or pointing inwards $\Omega$, which shows that $\Omega$ is an invariant set (for the related analysis, see, e.g., \citep{LAJMANOVICH}). Therefore, if $x (0) \in \Omega$, the state trajectory satisfies $x(t) \in \Omega$ for all $t \geq 0$.
\end{proof}

\section{Stability Analysis}\label{stabilitysec}

In this section, we analyze the stability of the closed-loop system for the SIS model \req{controldynamics}. 
In particular, we investigate a sufficient condition that, under appropriate selections of the control and the event-triggering gains, the control objective \req{objective} is achieved by applying the event-triggered controller \req{ui}, \req{vij}. 

{We start our stability analysis by introducing} several additional parameters and notations.
First, we denote the set of control and the event-triggering gains as $k = [k_1, \ldots, k_n]^\mathsf{T}$, $\sigma = [\sigma_1, \ldots, \sigma_n]^\mathsf{T}$, and $\eta = [\eta_1, \ldots, \eta_n]^\mathsf{T}$ {. We then define the matrices} 
\begin{align}
K&={\rm diag}(k),\ L=[l_{ji}]_{i,j},\\
G &= {\rm diag}(\sigma), \ H = {\rm diag}(\eta).
\end{align} 
Second, {we define} a {candidate} Lyapunov function by
\begin{equation}
V( {x})= {p}^\mathsf{T} {x},
\label{eq11}
\end{equation}
for a {given} Lyapunov parameter ${p}=[p_1,\ldots,p_n]^\mathsf{T} \in \mathbb{R}_{>0}^n$. 
Note that we can make use of the candidate Lyapunov function as the linear function of $x$, since the dynamics \req{controldynamics} is non-negative for every $x(0) \in [0, 1]^n$ (see \rprop{invariantprop}). 
Moreover, let $p_m^* > 0$ for all $m \in \{1, \ldots, M\}$ be given by  
\begin{equation}
p_m^*=\min_{i\in {\rm supp}( {w}_m)} p_i, 
\label{pmstar}
\end{equation}
where $w_m \in \{0,1\}^n$ is defined in \req{objective}. 
That is, $p_m^*$ represents the smallest value among the set of the Lyapunov parameters whose indices belong to the support of $w_m$. 
Additionally, {define the vectors} $s, r \in \mathbb{R}^n$ by 
\begin{align}
 {s}^\mathsf{T} &=  {p}^\mathsf{T}(K+L)(I_n-G),
\label{svec}\\
 {r}^\mathsf{T} &=  {p}^\mathsf{T}\{\bar{B}-\ubar{D}+(K+L)H\}.
\label{rvec}
\end{align}
Finally, define the matrix $Q \in \mathbb{R}^{n\times n}$ and the set ${\cal W} \subset \mathbb{R}^n$ by 
\begin{align}
Q &= S+ \cfrac{1}{2} PL (I_n - G) (G+H),  \label{shat} \\
{\cal W} &=\{x\in \mathbb{R}^n: x^\mathsf{T}Q x- \textcolor{black}{(r+\epsilon \mathsf{1}_n)}^\mathsf{T}x \le 0\} \label{calw}
\end{align}
where $S = {\rm diag}(s)$ \textcolor{black}{, $P = {\rm diag}(p)$, and $\epsilon >0$ is an any positive constant}.

{The following theorem gives a sufficient condition for the control objective \req{objective} to be achieved and is} the main result {of} this section.
\begin{mythm}\label{mainresult}
\normalfont
Consider the SIS model \req{controldynamics}, the event-triggered controller \req{ui}, \req{vij}, and the control objective \req{objective}. Assume that the control and the event-triggering gains satisfying \req{conditioneventtrig} and \req{constraintcontrolparameters} 
are chosen such that the following conditions are satisfied: 
\begin{equation}
\theta^* \le p_m^*\bar{d}_m,  
\label{thetastar}
\end{equation}
for all $m \in \{1, \ldots, M\}$, where $\theta^* \in \mathbb{R}$ is defined according to the following optimization problem: 
\begin{align}
{\theta^* = \underset{x \in {\cal W}}{\rm max}\ \ p^\top x}
\label{optimization}
\end{align}
where ${\cal W}\subset \mathbb{R}^n$ is defined in \req{calw}. 
Then, for any $x(0) \in [0, 1]^n$, the control objective \req{objective} is achieved by applying the event-triggered controller \req{ui}, \req{vij}. In addition, for every selection of the control and the event-triggering gains satisfying \req{conditioneventtrig} and \req{constraintcontrolparameters}, it follows that the optimization problem \req{optimization} is strictly convex. 
\qed
\end{mythm}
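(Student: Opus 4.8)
The plan is to establish the first claim by a Lyapunov argument built on the linear candidate $V(x)=p^\mathsf{T}x$ of \req{eq11}, exploiting throughout the invariance $x(t)\in[0,1]^n$ provided by \rprop{invariantprop}. First I would substitute the feedback laws \req{ui}, \req{vij} into the controlled dynamics \req{controldynamics} and rewrite the closed loop in terms of the measurement errors $e_i(t)=x_i(t)-x_i(t^i_{\ell_t})$, so that $u_i(t)$ and $v_{ji}(t)$ appear through $x_i(t)$, $x_j(t)$ and the errors. Because $k_i>0$, $l_{ji}\le\bar\beta_{ij}$ and $x(t)\ge 0$, the event-triggering bound $|e_i(t)|\le\sigma_ix_i(t)+\eta_i$ can then be used to upper bound $\dot x_i(t)$ term by term; bounding $x_j^2(t)\le x_j(t)$ in the one place where a spurious quadratic cross term would otherwise remain, and relaxing via $\bar\beta_{ji}-l_{ji}\ge 0$ and $1-\sigma_i<1$, I arrive at a componentwise differential inequality which, collected over $i\in\mathcal{V}$, reads $\dot x(t)\le\{\bar B-\ubar D+(K+L)H\}x(t)-(K+L)(I_n-G)\tilde x(t)-\tfrac12 X(t)L(I_n-G)(G+H)x(t)$, with $\tilde x(t)$ stacking the $x_i^2(t)$ and the inequality understood elementwise.

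Multiplying by $p^\mathsf{T}>0$ and identifying the vectors $s,r$ of \req{svec}, \req{rvec} and the matrix $Q$ of \req{shat} gives $\tfrac{\mathrm d}{\mathrm dt}V(x(t))\le r^\mathsf{T}x(t)-x^\mathsf{T}(t)Qx(t)$. By the definition of $\theta^*$ in \req{optimization}, any $x$ with $p^\mathsf{T}x>\theta^*$ lies outside $\mathcal W$, i.e. $x^\mathsf{T}Qx-r^\mathsf{T}x>0$; hence $V(x(t))>\theta^*$ forces $\dot V(x(t))<0$, and a standard comparison argument on the compact set $[0,1]^n$ (on which $r^\mathsf{T}x-x^\mathsf{T}Qx$ is continuous and stays negative wherever $V(x)\ge\theta^*+\varepsilon$) yields $\limsup_{t\to\infty}V(x(t))\le\theta^*$. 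Finally, since $x(t)\ge 0$ and $p_m^*=\min_{i\in{\rm supp}(w_m)}p_i$, we have $p_m^*w_m^\mathsf{T}x(t)=p_m^*\sum_{i\in{\rm supp}(w_m)}x_i(t)\le\sum_{i=1}^n p_ix_i(t)=V(x(t))$; combined with the hypothesis $\theta^*\le p_m^*\bar d_m$ this gives $\limsup_{t\to\infty}w_m^\mathsf{T}x(t)\le\bar d_m$ for every $m$, which is exactly \req{objective}.

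For the convexity claim I would show that $Q$ is positive definite for every admissible choice of gains — equivalently, since $Q$ need not be symmetric, that its symmetric part $\tfrac12(Q+Q^\mathsf{T})$ is positive definite, which suffices because $x^\mathsf{T}Qx=\tfrac12 x^\mathsf{T}(Q+Q^\mathsf{T})x$. Reading off \req{shat} entrywise, the $i$th diagonal entry of $Q$ equals $(1-\sigma_i)\{p_ik_i+\sum_{j\in\mathcal N_i^{\rm out}}p_jl_{ij}\}>0$, while the only nonzero off-diagonal entries in row $i$ are $\tfrac12(1-\sigma_i)(\sigma_i+\eta_i)p_jl_{ij}>0$ for $j\in\mathcal N_i^{\rm out}$; their sum is strictly smaller than the diagonal entry since $1-\tfrac{\sigma_i+\eta_i}{2}>0$ when $\sigma_i,\eta_i\in(0,1)$. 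Thus $Q$ is strictly row-diagonally dominant with positive diagonal, so by the Gershgorin circle theorem $Q$ and its symmetric part are positive definite; consequently $\mathcal W$ is a bounded sublevel set of the strictly convex quadratic $x\mapsto x^\mathsf{T}Qx-r^\mathsf{T}x$ and the objective $p^\mathsf{T}x$ is linear, so \req{optimization} is a convex program (and $\theta^*$ is finite). I expect the main obstacle to be the bookkeeping in the chain of inequalities for $\dot x_i$: the bound must be relaxed just enough that precisely the cross term $-\tfrac12 x_i(t)\sum_{j\in\mathcal N_i^{\rm in}}l_{ji}(1-\sigma_j)(\sigma_j+\eta_j)x_j(t)$ survives, as this is exactly the term that makes $Q$ diagonally dominant and thereby keeps \req{optimization} convex for all admissible gains.
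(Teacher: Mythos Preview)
Your proposal is correct and follows essentially the same route as the paper: the same Lyapunov candidate $V=p^\mathsf{T}x$, the same chain of bounds on $\dot x_i$ (using the event-triggering inequality, $x_j^2\le x_j$, $\bar\beta_{ji}\ge l_{ji}$ and $1-\sigma_i<1$) to reach $\dot V\le r^\mathsf{T}x-x^\mathsf{T}Qx$, the same contraposition against $\theta^*$ and the same $p_m^*$ comparison, and the same diagonal-dominance/Gershgorin argument for the convexity claim. Your additional remark that it is the symmetric part of $Q$ that matters is exactly the content of the paper's subsequent remark; note, however, that what both you and the paper actually verify is \emph{column} diagonal dominance (the off-diagonal entries you list, $\tfrac12(1-\sigma_i)(\sigma_i+\eta_i)p_jl_{ij}$ for $j\in\mathcal N_i^{\rm out}$, sit in column $i$, not row $i$), so you should phrase the dominance accordingly.
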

In essence, \rthm{mainresult} states that if the control and the event-triggering gains are appropriately chosen such that \req{thetastar} is satisfied, then the control objective \req{objective} is achieved by applying the event-triggered controller \req{ui}, \req{vij}. \rthm{mainresult} also states that the optimization problem \req{optimization} is strictly {convex} for every selection of the control and the event-triggering gains. Hence, the condition \req{thetastar} can be efficiently checked in polynomial time. 
\begin{myrem}
\normalfont
{Note that the matrix $Q$ may not be a symmetric matrix. Without loss of generality, if the matrix $Q$ is not symmetric, we can replace the set ${\cal W}$ in \req{optimization} with ${\cal W}' = \{x\in \mathbb{R}^n: \frac{1}{2}{x}^\mathsf{T}(Q^\mathsf{T} + Q)x - (r+\epsilon \mathsf{1}_n)^\mathsf{T} x \leq 0\}$, so that $Q^\mathsf{T} + Q$ is the symmetric matrix and the convex optimization problem is given in a standard form \citep{boyd}. Such replacement is valid because ${x}^\mathsf{T}Q x = \frac{1}{2}{x}^\mathsf{T}(Q^\mathsf{T} + Q)x$ for all $x \in \mathbb{R}^n$.}
\qedwhite
\end{myrem}


\begin{proof}
Let us first show that the optimization problem \req{optimization} is strictly convex. 
From \req{calw}, the strict convexity of \req{optimization} can be shown by guaranteeing that the matrix $Q$ is positive definite for every selection of the control and the event-triggering gains satisfying \req{conditioneventtrig} and \req{constraintcontrolparameters}. From \req{shat}, it can be shown that the $i$-th diagonal element of the matrix $Q$, denoted by ${q}_{ii}$, is given by 
\begin{align}
{q}_{ii} = (1-\sigma_i)\left\{p_ik_i + \textcolor{black}{ {\frac{\sigma_i + \eta_i}{2}p_i l_{ii}} + \sum_{ j\in \mathcal{N}_i^{\rm out}\backslash \{i\}}p_jl_{ij}}\right\} >0,\label{eq19}
\end{align}
for all $i \in \mathcal{V}$. 
Moreover, the $(j, i)$-th ($j\neq i$) off-diagonal element of the matrix $Q$, which is denoted as ${q}_{ji}$, is given by 
\begin{align}
{q}_{ji} =
\begin{cases}
\frac{1}{2}(1-\sigma_i)(\sigma_i+\eta_i)p_jl_{ij}>0,\ &{\rm if} \ j \in {\cal N}^{\rm out}_i, \notag \\ 
0 \ &{\rm if} \ \textcolor{black}{j \notin \mathcal{N}_i^{\rm out}}
\end{cases}
\end{align}
Hence, the difference between the $i$-th diagonal element and the sum of the other elements in the $i$-th row is given by 
\begin{align}
&{q}_{ii} - {\color{blue} \sum_{j \neq i } {q}_{ji}} \notag \\ 
&= (1-\sigma_i) \left\{ p_ik_i + {\color{blue} \frac{\sigma_i + \eta_i}{2}p_i l_{ii} + \left (1 - \frac{\sigma_i + \eta_i}{2} \right ) \sum_{j\in \mathcal{N}_i^{\rm out}\backslash \{ i\}}p_jl_{ij}} \right \} >0
\label{diffs}
\end{align}
where we used $1- (\sigma_i + \eta_i)/2 >0$ for all $\sigma_i \in (0,1)$ and $\eta_i \in (0, 1)$. 
Hence, it follows that $Q$ is a strongly diagonally dominant matrix, which implies that, from the Gershgorin circle theorem~\citep{horn2012matrix}, the matrix $Q$ is positive-definite.

Next, we show that the control objective \req{objective} is achieved by applying the event-triggered controller \req{ui}, \req{vij}. Using \req{ui} and \req{vij}, the closed-loop system is given by 
\begin{align}
\dot{x}_{i}(t)=&-(\ubar{\delta}_i+k_ix_i(t_\ell^{i}))x_i(t)\nonumber\\ 
&+(1-x_i(t))\sum_{j\in \mathcal{N}_i^{\rm in}} (\bar \beta_{ji}-l_{ji}x_j(t_\ell^{j}))x_j(t)\nonumber\\
=&-\{\ubar{\delta}_i+k_i(x_i(t) - e_i(t))\} x_i(t)\nonumber\\
&+(1-x_i(t))\sum_{j\in \mathcal{N}_i^{\rm in}} \left\{\bar \beta_{ji}-l_{ji}\left(x_j(t) - e_j(t)\right)\right\}x_j(t)\label{eq23}
\end{align}
Moreover, due to the event-triggered condition \req{evcond}, it follows that $|e_i(t)|\le \sigma_i x_i(t)+\eta_i$, for all $t\ge 0$ and $i\in\mathcal{V}$. Hence, we obtain
\begin{align}\label{transformdxdt}
\dot{x}_{i}(t)\le&-\{\ubar{\delta}_i+k_i(x_i(t)-|e_i(t)|)\} x_i(t)\nonumber\\
&+(1-x_i(t))\sum_{j\in \mathcal{N}_i^{\rm in}}\{\bar \beta_{ji}-l_{ji}(x_j(t)-|e_j(t)|)\}x_j(t)\nonumber\\
\le&-\{\ubar{\delta}_i + k_i((1-\sigma_i)x_i(t) - \eta_i)\}x_i(t) \nonumber\\
&+(1-x_i(t))\sum_{j\in \mathcal{N}_i^{\rm in}}\{\bar \beta_{ji} - l_{ji}( (1-\sigma_{j})x_j(t) - \eta_j)\}x_j(t)\nonumber \\
= &-(\ubar{\delta}_i-k_i\eta_i)x_i(t)+\sum_{j\in \mathcal{N}^{\rm in}_i} (\bar \beta_{ji}+l_{ji}\eta_j)x_j(t), \nonumber \\
&-k_i(1-\sigma_i)x_i^2(t)-\sum_{j\in \mathcal{N}^{\rm in} _i} l_{ji}(1-\sigma_{j})x_j^2(t)\nonumber\\
&-x_i(t)\sum_{j\in \mathcal{N}_i^{\rm in}}(\bar \beta_{ji} + l_{ji}\eta_{j})x_j(t)
+ x_i(t)\sum_{j\in \mathcal{N}_i^{\rm in}} l_{ji}(1 - \sigma_j)x_j^2(t) 
\end{align}
Note that for every $x (0) \in [0, 1]^n$, we have $x (t) \in [0,1]^n$ for all $t >0$ (see \rprop{invariantprop}). 
Hence, the last term in \req{transformdxdt} can be computed as
\begin{align}
x_i(t)\sum_{j\in \mathcal{N}_i^{\rm in}} l_{ji}(1 - \sigma_j)x_j^2(t)\le x_i(t)\sum_{j\in \mathcal{N}_i^{\rm in}} l_{ji}(1 - \sigma_j)x_j(t). 
\end{align}
Thus, \req{transformdxdt} becomes
\begin{align}\label{xeq2}
\dot{x}_{i}(t)\le 
& -(\ubar{\delta}_i-k_i\eta_i)x_i(t)+\sum_{j\in \mathcal{N}^{\rm in}_i} (\bar \beta_{ji}+l_{ji}\eta_j)x_j(t), \nonumber \\
&-k_i(1-\sigma_i)x_i^2(t)-\sum_{j\in \mathcal{N}^{\rm in} _i} l_{ji}(1-\sigma_{j})x_j^2(t)\nonumber\\
&-x_i(t)\sum_{j\in \mathcal{N}_i^{\rm in}}\{\bar \beta_{ji} - l_{ji} + l_{ji}(\sigma_j+\eta_{j})\}x_j(t),  
\end{align}
Using $\bar \beta_{ji} \ge l_{ji}$ and $1-\sigma_i <1$, we then obtain
\begin{align}
\dot{x}_{i}(t)\le&-(\ubar{\delta}_i-k_i\eta_i)x_i(t)+\sum_{j\in \mathcal{N}^{\rm in}_i} (\bar \beta_{ji}+l_{ji}\eta_j)x_j(t)\nonumber\\ 
&-k_i(1-\sigma_i)x_i^2(t)-\sum_{j\in \mathcal{N}^{\rm in} _i} l_{ji}(1-\sigma_{j})x_j^2(t)\nonumber\\
&-\frac{1}{2}x_i(t)\sum_{j\in \mathcal{N}^{\rm in} _i}  l_{ji}(1-\sigma_j)(\sigma_j+\eta_j)x_j(t), 
\label{xeq}
\end{align}
By collecting \req{xeq} for all $i \in \mathcal{V}$, we have 
\begin{align}
\dot{x}(t)\le&\left \{\bar{B}-\ubar{D}+(K+L)H\right\} {x}(t)-(K+L)(I_n-G)\tilde{ {x}}(t)\nonumber\\
&-\frac{1}{2}X(t)L(I_n-G)(G+H) {x}(t),
\label{xdoteq}
\end{align}
where $\tilde{ {x}}(t)=[x_1^2(t),\ldots,x_n^2(t)]^\mathsf{T}$ and $X(t) = {\rm diag}(x(t))$.  
The derivative of the Lyapunov function $V( {x}) = p^\mathsf{T} x$ is then given by 
\begin{align}
\cfrac{{\rm d}}{{\rm d}t} {V}( {x}(t)) &= p^\mathsf{T} \dot{x}(t) \notag \\
&\le  {r}^\mathsf{T} {x}(t)- {x}^\mathsf{T}S {x}(t) -\frac{1}{2} {x}^\mathsf{T}(t) PL(I_n-G)(G+H) {x}(t)\nonumber\\
&=  {r}^\mathsf{T} {x}(t)- {x}^\mathsf{T}(t)Q {x}(t),
\label{eq27}
\end{align}
so that ${\rm d}{V}( {x}(t))/{\rm d}t \leq {r}^\mathsf{T} {x}(t)- {x}^\mathsf{T}(t) Q {x}(t)$. 

Now, consider $\theta^* \in \mathbb{R}$ computed from \req{optimization}. 
From \req{optimization}, it follows that $x^\mathsf{T} Q x - (r+\epsilon \mathsf{1}_n)^\mathsf{T} {x} \leq 0 \implies p^\mathsf{T} x \leq \theta^*$ for all $x \in \mathbb{R}^n$. Moreover, since \req{optimization} is strictly convex and it corresponds to the maximization of the linear function (i.e., $p^\mathsf{T} x$) over the ellipsoidal set (i.e., $\mathcal{W}$), the optimal solution of \req{optimization} is unique and lies on the 
boundary of $\mathcal{W}$ \footnote{For this clarification, see, e.g., the solution to Exercise 4.21(b) in \citep{boyd}, in which it can be verified that the optimal solution $x^\star$ lies on the boundary of the ellipsoidal set $(x -x_c)^T A (x -x_c) \leq 1$.}.
In other words, we have $x^{*\mathsf{T}}Qx^* - (r+\epsilon \mathsf{1}_n)^\mathsf{T} x^* =0$, where $x^*$ is the optimal solution of $x$ in \req{optimization} (i.e., $p^\mathsf{T} x^* = \theta^*$). Therefore, 
\begin{align}\label{thetaprop}
x^\mathsf{T} Q x - (r+\epsilon \mathsf{1}_n)^\mathsf{T} {x} < 0 \implies p^\mathsf{T} x < \theta^*,
\end{align}
for all $x \in \mathbb{R}^n$. 
Hence, by taking the contrapositive of \req{thetaprop}, we obtain $p^\mathsf{T} x \geq \theta^* \implies x^\mathsf{T} Q x - (r+\epsilon \mathsf{1}_n)^\mathsf{T} {x} \geq  0$ for all $x \in \mathbb{R}^n$. 
Therefore, from \req{eq27}, we obtain 
\begin{align}
V( {x}(t)) \geq \theta^* \implies \cfrac{{\rm d}}{{\rm d}t} {V}( {x}(t)) &\leq  {r}^\mathsf{T} {x}(t)- {x}^\mathsf{T} (t)Q {x}(t) \notag \\
&\leq - \epsilon \mathsf{1}_n ^\mathsf{T} x = - \epsilon \|x\|_1 \label{laypunovdecrease}
\end{align}
Eq.~\req{laypunovdecrease} implies that the derivative of the Lyapunov function $V$ along the trajectory of the SIS model satisfies 
\begin{align}
   \frac{\mathrm{d}}{\mathrm{d}t} {V}( {x}(t)) \leq - \epsilon \|x\|_1, \ \forall x(t) \in \Lambda,  
\end{align}
where $\Lambda = \{x \in [0, 1]^n : V(x) \geq \theta^*\}$. Since $\dot{V}$ is negative in $\Lambda$, any state trajectory starting in $\Lambda$ converges to the set $\Omega = \{x \in [0, 1]^n : V(x) \leq \theta^*\}$ in finite time (see e.g., Section 4.8 in \citep{khalil}). Moreover, since $\dot{V}$ is negative in $\partial \Omega = \{x \in [0, 1]^n : V(x) = \theta^*\}$, it is shown that $\Omega$ is an invariant set, i.e., once the state enters $\Omega$, it remains therein for all future times. 
Therefore, for every $x(0) \in [0, 1]^n$, there exists $t' \geq 0$ such that  
\begin{align}\label{lyapunovconverge}
V(x(t)) \leq \theta^*, \ \forall t \geq t'. 
\end{align}
Moreover, since $p_m^*$ is defined by \req{pmstar}, we have 
\begin{align}
p_m^* {w}_m^\mathsf{T} {x}(t) &=p_m^*\sum _{i\in {\rm supp}( {w}_m)}x_i (t) \le \sum_{i\in {\rm supp}( {w}_m)}p_ix_i (t) \notag \\
&\le \sum_{i = 1}^n p_ix_i(t) = V( {x}(t)),
\label{eq30}
\end{align}
which implies that
\begin{align}\label{lyapunovconverge2}
V( {x(t)})\le \theta^*, \ \forall t \geq t' \implies 
p_m^* {w}_m^T {x}(t)\le \theta^*, \ \forall t \geq t'. 
\end{align}
 Hence, if \req{thetastar} holds for all $m \in \{1, \ldots, M\}$, we have ${w}_m^T {x}(t)\le \bar{d}_m$, for all $t \geq t'$ and $m \in \{1, \ldots, M\}$. Therefore, for any $x(0) \in [0, 1]^n$, the control objective \req{objective} is achieved by applying the event-triggered controller \req{ui}, \req{vij}. 
\end{proof}

\section{Event-triggered controller synthesis}\label{eventtrigdesignsec}

{In this section, we investigate an event-triggered controller design.} 
As shown in \rthm{mainresult}, the control objective \req{objective} is achieved if the control and the event-triggering gains {satisfy the inequality} \req{thetastar} for all $m\in \{1, \ldots, M\}$ {, which we can efficiently check by convex optimization}. 
However, {it is not necessarily easy to directly use the inequality \req{thetastar} for} \textit{designing} the control and the event-triggering gains, {because the vector} $r$ and {the matrix} $Q$ used to define the set ${\cal W}$ in {the optimization problem} \req{optimization} {contain the parameters to be designed}. 
{In order to overcome this difficulty, in} this section, we {present a tractable and numerically efficient method for designing the control and event-triggering gains via convex relaxation techniques for the conditions required in \rthm{mainresult}, such that both the control and the event-triggering gains are designed in polynomial time}. 

Specifically, we propose an \textit{emulation-based} approach to the design of the control and the event-triggering gains. 
The emulation-based approach is the well-known technique to design the event-triggered controller (see, e.g., \citep{heemels2012a,heemels2013a}), and basically it consists of the two steps. First, we find the set of the control gains  under the assumption that the \textit{continuous-time} controller is implemented. Second, using the control gains  obtained by the first step, we then design the event-triggering gains, such that the control objective is achieved. As will be shown below, both the former and the latter problems can be formulated by geometric programmings \citep{boyd}, meaning that the control and the event-triggering gains can be found efficiently in polynomial time. 

\subsection{Designing control gains}\label{designcontrolgainsec}
We start by designing the control gains  $k_i$, $l_{ij}$ for all $i \in \mathcal{V}$ and $j\in {\cal N}^{\rm out}_i$. 
As mentioned above, in the emulation-based approach, the control gains  are designed under the assumption that the continuous-time controller is implemented; that is, \req{ui} and \req{vij} are replaced by 
\begin{align}
u_i (t) &= k_i x_i (t), \label{uitimecon} \\
v_{ij} (t) &= l_{ij} x_i (t), \ j \in \mathcal{N}^{\rm out} _i, \label{vijtimecon}
\end{align}
for all $t \geq 0$. Moreover, define the constants $\tilde{r}_{c, i}$, $c_{1,i}$ for all $i\in {\cal V}$ by 
\begin{align}
\tilde{r}_{c, i} &= -p_i\ubar{\delta}_i + \sum_{j\in \mathcal{N}^{\rm out}_i} p_j\bar \beta_{ij}, \label{rtildeci} \\
c_{1,i}&= p_i\bar{k}_i+\sum_{j\in \mathcal{N}^{\rm out}_i}p_j\bar{l}_{ij} >0, \label{ctwoi}
\end{align}
From \req{rtildeci}, we obtain $\tilde{r}^\mathsf{T} _c = p^\mathsf{T} \left (\bar{B}-\ubar{D}\right)$ with $\tilde{r} _c = [\tilde{r} _{c,1}, \ldots, \tilde{r} _{c,n}]^\mathsf{T}$. 
In addition, define the set $\mathcal{C} \subseteq \mathcal{V}$ and the constants $c_{2,m}$ for all $m \in \{1, \ldots, M\}$ by 
\begin{align}
&\mathcal{C}= \{i \in \mathcal{V}\ :\ \tilde{r}_{c, i}\ge 0\}, \label{positivec}\\ 
&c_{2,m}= 2p_m^* \bar{d}_m - \sum_{i\notin \mathcal{C}}\cfrac{p_i \tilde{r}_{c, i}}{c_{1,i}} >0. 
\end{align}

The following proposition shows that the set of the control gains  achieving the control objective \req{objective} under the continuous-time controller can be found by solving a geometric programming problem. 

\begin{myprop}\label{gpproposition}
\normalfont
Consider the SIS model \req{controldynamics}, continuous-time controller \req{uitimecon}, \req{vijtimecon}, and the control objective \req{objective}. 
Moreover, let $\tilde{k}^*_i, \tilde{l}^*_{ij}, \tilde{s}^*_{c,i} >0$ for all $i \in \mathcal{V}$, $j \in {\cal N}^{\rm out} _i$ and $\epsilon^*_1, \epsilon^*_2, \epsilon^*_3, {\xi}^*_{c} >0$ denote the optimal solution of $\tilde{k}_i, \tilde{l}_{ij}, \tilde{s}_{c,i} >0$ for all $i \in \mathcal{V}$, $j \in {\cal N}^{\rm out} _i$ and $\epsilon_1, \epsilon_2, \epsilon_3, {\xi}_{c}>0$, in the following geometric programming: 
\begin{align}
  \underset{Z_c >0}{\rm{minimize}}&\ \ g_c \left(Z_c \right), \notag \\ 
 \! \rm{subject~to}\ \ \ 
  &\tilde{s}_{c,i}+ p_i\tilde{k}_i + \sum_{j\in \mathcal{N}^{\rm out}_i}p_j\tilde{l}_{ij}\le c_{1,i},\ \forall i \in \mathcal{V}\label{gpthree}\\
  &\tilde{k}_i + \epsilon_1 \leq \bar{k}_i,\ \forall i \in \mathcal{V} \label{gpfour} \\ 
  &\tilde{l}_{ij} + \epsilon_2 \leq \bar{l}_{ij},\ \forall i \in \mathcal{V}, \forall j \in {\cal N}^{\rm out} _i, \label{gpfive}\\
  &{\xi}_c ^{\frac{1}{2}}+\sum_{i\in \mathcal{C}}{p_i(\tilde{r}_{c,i}+\epsilon_3)}{\tilde{s}^{-1}_{c,i}}\le c_{2,m},\ \forall m\in \{1,\ldots, M\}\label{gpone}\\
  &\left(\sum_{i \in \mathcal{V}}{p_i^2}{\tilde{s}^{-1}_{c,i}}\right)\left(\sum_{i \in \mathcal{V}}{(\tilde{r}_{c,i}+\epsilon_3)^{2}}{\tilde{s}^{-1}_{c,i}}\right) \le {\xi}_c,\label{gptwo}
\end{align}
where $Z_c$ is the vector that collects all the decision variables in the optimization problem, i.e., $Z_c = \left[\tilde{k}_i, \tilde{l}_{ij}, \tilde{s}_{c,i},i \in \mathcal{V},j \in {\cal N}^{\rm out} _i, \epsilon_1,\epsilon_2,\epsilon_3, {\xi}_{c}\right]^\mathsf{T}$ and $g_c(\cdot)$ is a given posynomial function. 
Then, the control objective is achieved by applying the continuous-time controller, in which the control gains $k_i$, $l_{ij}$ are given by 
\begin{align}\label{kilij}
k_i = \bar{k}_i-\tilde{k}^*_i,\ \ l_{ij} =  \bar{l}_{ij} -\tilde{l}^* _{ij} 
\end{align}
for all $i \in \mathcal{V}$ and $j \in {\cal N}^{\rm out} _i$. 
\qed
\end{myprop}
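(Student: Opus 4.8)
The plan is to re-run the Lyapunov argument from the proof of \rthm{mainresult} for the continuous-time closed loop and then translate the resulting sufficient condition $\theta^*\le p_m^*\bar d_m$ into the constraints of the geometric program. First I would substitute \req{uitimecon}, \req{vijtimecon} into \req{controldynamics}. Since there is no triggering error, the computation in \req{eq23}--\req{eq27} carried out with $e_i\equiv 0$ (equivalently $\sigma_i=\eta_i=0$) shows that the derivative of $V(x)=p^{\mathsf T}x$ satisfies $\dot V(x(t))\le \tilde r_c^{\mathsf T}x(t)-x(t)^{\mathsf T}Sx(t)$, where $\tilde r_c$ is the vector in \req{rtildeci} (recall $\tilde r_c^{\mathsf T}=p^{\mathsf T}(\bar B-\ubar D)$) and $S={\rm diag}(s_1,\dots,s_n)$ with $s_i=p_ik_i+\sum_{j\in\mathcal{N}^{\rm out}_i}p_jl_{ij}$; this is precisely \req{eq27} with $r=\tilde r_c$ and $Q=S$. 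Because $k_i>0$, $l_{ij}>0$, $p_i>0$, the matrix $S$ is diagonal with positive entries, hence positive definite, so ${\cal W}=\{x:x^{\mathsf T}Sx\le \tilde r_c^{\mathsf T}x\}$ is a bounded ellipsoid and $\theta^*=\max_{x\in{\cal W}}p^{\mathsf T}x$ is attained. Continuing exactly as in the proof of \rthm{mainresult} from \req{eq27} onward yields $\limsup_{t\to\infty}V(x(t))\le\theta^*$, so the control objective \req{objective} follows once we establish $\theta^*\le p_m^*\bar d_m$ for every $m\in\{1,\dots,M\}$.

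Second, I would evaluate $\theta^*$ in closed form. Completing the square, ${\cal W}=\{x:\sum_{i}s_i(x_i-\tilde r_{c,i}/(2s_i))^2\le \sum_{i}\tilde r_{c,i}^2/(4s_i)\}$, and maximizing the linear functional $p^{\mathsf T}x$ over this ellipsoid (by Cauchy--Schwarz after the change of variables $z_i=\sqrt{s_i}\,(x_i-\tilde r_{c,i}/(2s_i))$) gives
\begin{equation*}
2\theta^* = \sum_{i=1}^{n}\frac{p_i\tilde r_{c,i}}{s_i} + \sqrt{\left(\sum_{i=1}^{n}\frac{\tilde r_{c,i}^2}{s_i}\right)\left(\sum_{i=1}^{n}\frac{p_i^2}{s_i}\right)}.
\end{equation*}
Third, I would bound $2\theta^*$ using the constraints of the geometric program and the substitution \req{kilij}. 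Constraint \req{gpthree} together with positivity of the decision variables gives $0<\tilde s_{c,i}\le s_i$, while $\tilde k_i,\tilde l_{ij}>0$ in \req{kilij} forces $k_i<\bar k_i$, $l_{ij}<\bar l_{ij}$, hence $s_i\le c_{1,i}$. Consequently: for $i\notin\mathcal{C}$, where $\tilde r_{c,i}<0$, one has $p_i\tilde r_{c,i}/s_i\le p_i\tilde r_{c,i}/c_{1,i}$; for $i\in\mathcal{C}$, where $\tilde r_{c,i}\ge 0$, one has $p_i\tilde r_{c,i}/s_i\le p_i\tilde r_{c,i}\tilde s_{c,i}^{-1}$; and $(\sum_i\tilde r_{c,i}^2/s_i)(\sum_i p_i^2/s_i)\le(\sum_i\tilde r_{c,i}^2\tilde s_{c,i}^{-1})(\sum_i p_i^2\tilde s_{c,i}^{-1})\le\xi_c$ by \req{gptwo}. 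Combining these inequalities with \req{gpone} and the definition of $c_{2,m}$,
\begin{equation*}
2\theta^*\ \le\ \sqrt{\xi_c}+\sum_{i\in\mathcal{C}}p_i\tilde r_{c,i}\tilde s_{c,i}^{-1}+\sum_{i\notin\mathcal{C}}\frac{p_i\tilde r_{c,i}}{c_{1,i}}\ \le\ c_{2,m}+\sum_{i\notin\mathcal{C}}\frac{p_i\tilde r_{c,i}}{c_{1,i}}\ =\ 2p_m^*\bar d_m,
\end{equation*}
hence $\theta^*\le p_m^*\bar d_m$ for all $m$. Finally, \req{gpfour}, \req{gpfive} with $\tilde k_i,\tilde l_{ij}>0$ guarantee $k_i\in(0,\bar k_i]$ and $l_{ij}\in(0,\bar l_{ij}]$ in \req{kilij}, i.e., \req{constraintcontrolparameters} holds, which closes the argument.

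I expect the main obstacle to lie in the second and third steps taken together: one must obtain the closed form for $\theta^*$ and then recognize that the auxiliary variables of the geometric program are introduced precisely so that every relaxation points in the admissible direction -- $\tilde s_{c,i}$ is a \emph{lower} bound for the quantity $s_i$, which is not a posynomial in the decision variables $\tilde k_i,\tilde l_{ij}$ (so the monomial $\tilde s_{c,i}^{-1}$ upper-bounds $s_i^{-1}$), and $\xi_c$ is a monomial upper bound for the product of sums. The sign split at $\mathcal{C}$ must be handled with care: only the negative terms $p_i\tilde r_{c,i}/s_i$ ($i\notin\mathcal{C}$) admit a uniform bound in terms of $c_{1,i}$, whereas the nonnegative ones have to be carried by the decision variable $\tilde s_{c,i}$; this is exactly why $\mathcal{C}$ and $c_{2,m}$ are defined as in \req{positivec}. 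One should also note in passing that feasibility of the geometric program is implicit in the statement (it presupposes an optimal solution) and that, since $S\succ 0$, the maximum defining $\theta^*$ is attained, so the closed-form expression is legitimate.
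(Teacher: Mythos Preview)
Your proposal is correct and follows essentially the same route as the paper: the paper packages the Lyapunov step as a separate \rlem{emulationlemma} (introducing the relaxation $\tilde{s}_{c,i}\le s_i$ already there, so that $\tilde\theta^*_c$ is computed with $\tilde S_c$), then derives the closed-form ellipsoid maximum and translates it into \req{gpthree}--\req{gptwo} via the same sign split at $\mathcal{C}$ and the auxiliary variable $\xi_c$. Your choice to work with the exact $S$ first and defer the $\tilde s_{c,i}$ relaxation to the bounding step is a minor reorganization, not a different argument.
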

\rprop{gpproposition} is shown by providing sufficient conditions for the control objective to be achieved under the continuous-time controller (see \rlem{emulationlemma} in Appendix~A), and then translate the conditions into the posynomial constraints as shown in \req{gpthree}--\req{gptwo}. 
For the detailed proof, see Appendix~A. 
\begin{myrem}[On the selection of the cost function $g_c$]
\normalfont 
\textcolor{black}{For example, one could select the (posynomical) cost function $g_c(\cdot)$ as follows: 
\begin{align}\label{controlparamcostfunc}
g_c\left(Z_c \right) = \sum_{i\in \mathcal{V}} \frac{w_{k,i}}{\tilde{k}_i} + \sum_{i\in \mathcal{V}} \sum_{j \in {\cal N}^{\rm out} _{i} } \frac{w_{l,ij}}{\tilde{l} _{ij}},  
\end{align}
where $w_{k,i}, w_{l,ij}>0$ for all $i \in \mathcal{V}, j \in \mathcal{N}^{\rm out} _i$ are given weights. Note that $\tilde{k}_i$ and $\tilde{l}_{ij}$ for all $i\in\mathcal{V}$ and $j\in\mathcal{N}_i^{\rm out}$ are the variables in the optimization problem satisfying  $\tilde{k}_i = \bar{k}_i - k_i$ and $\tilde{l}_{ij} = \bar{l}_{ij} - l_{ij}$ (see Appendix~A). 
Moreover, $\bar{k}_i$, $\bar{l}_{ij}$ for all $i\in\mathcal{V}$ and $j\in\mathcal{N}_i^{\rm out}$ are the constants that represent the upper bounds of the control gains (see \req{constraintcontrolparameters}). 
Hence, reducing $k_i$ (resp. ${l}_{ij}$) implies to reduce the cost of $1/{\tilde{k}_i}$ (resp. $1/{\tilde{l}_{ij}}$). Therefore, minimizing \req{controlparamcostfunc} subject to the constraints \req{gpthree}-\req{gptwo} aims at obtaining small control gains while achieving the control objective.} \qedwhite 
\end{myrem}

\subsection{Designing event-triggering gains} 
Let us now design the event-triggering gains, i.e., $\sigma_i \in (0, 1), \eta_i \in (0,1)$ for all $i\in {\cal V}$. Consider the event-triggered controller \req{ui}, \req{vij}, in which the triggering time instants $t^i _0, t^i _1, t^i _2, \ldots$ are given according to \req{triggeringtime}. 
Fix the control gains by $k_i = k^* _i \in (0, \bar{k}_{ij}]$, $l_{ij} = l^* _{ij} \in (0, \bar{l}_{ij}]$ for all $i \in \mathcal{V}$, $j \in {\cal N}_i^{\rm out}$, where $k^* _i$, $l^* _{ij}$ ($i \in \mathcal{V}$, $j \in {\cal N}_i^{\rm out}$) are the optimal control gains that are designed by solving the geometric programming problem proposed in \rprop{gpproposition}. 
Moreover, define the constants $c_{3, i}$ for all $i \in {\cal V}$ by 
\begin{align}\label{const3}
  c_{3,i} =p_i k^*_i+\sum_{j\in \mathcal{N}^{\rm out} _i}p_j l^*_{ij} >0.
\end{align}
For technical reasons, we make the following assumption: 
\begin{myas}\label{assumption}
\normalfont
For all $i \notin {\cal C}$, $c_{3, i} + \tilde{r}_{c, i} >0$. 
\qedwhite 
\end{myas}
Recall that $\tilde{r}_{c,i}$ and ${\cal C}$ are defined in \req{rtildeci} and \req{positivec}, respectively. Hence, \ras{assumption} implies that, for all $i \in {\cal V}$ satisfying $\tilde{r}_{c,i} <0$, the following condition is satisfied: 
\begin{align}
& c_{3, i} + \tilde{r}_{c, i} = p_i(k^* _i - \ubar{\delta}_i) + \sum_{j\in \mathcal{N}^{\rm out}_i} p_j (\bar \beta_{ij}+l^*_{ij}) >0.  \label{ascondition}
\end{align}
Hence, \req{ascondition} implies that the (optimal) control gains  $k^* _i$, $l^* _{ij}$, $i \notin {\cal C}$, $j \in {\cal N}^{\rm out} _i$ should be chosen large enough such that $c_{3, i} + \tilde{r}_{c, i}$ is positive.  
\begin{myrem}
\normalfont
The condition required in \ras{assumption} can be indeed satisfied by imposing an additional constraint in the geometric programming presented in \rprop{gpproposition}. From \req{ascondition}, the control gains  $k _i$, $l_{ij}$ for all $i \notin {\cal C}$ and $j \in {\cal N}^{\rm out} _i$ must be chosen such that $p_i(k _i - \ubar{\delta}_i) + \sum_{j\in \mathcal{N}^{\rm out}_i} p_j (\bar \beta_{ij}+l_{ij}) >0$ for all $i \notin {\cal C}$. This leads to the following posynomial constraint: 
\begin{align}
p_i\tilde{k}_i + p_i \ubar{\delta}_i + \sum_{j\in \mathcal{N}^{\rm out}_i}p_j\tilde{l}_{ij} + \epsilon \le c_{1,i} + \sum_{j\in \mathcal{N}^{\rm out}_i}p_j \bar \beta_{ij}, \label{ascondition2}
\end{align}
where $\epsilon >0$ is a given arbitrary small positive constant. Hence, $c_{3, i} + \tilde{r}_{c, i} >0$ is achieved by additionally imposing \req{ascondition2} for all $i \notin {\cal C}$ in the geometric programming provided in \rprop{gpproposition}. \qedwhite 
\end{myrem}

The following proposition shows that the event-triggering gains achieving the control objective can be found by solving the geometric programming problem:


\begin{myprop}\label{gppropositione}
\normalfont
Consider the SIS model \req{controldynamics}, event-triggered controller \req{ui}, \req{vij}, and the control objective \req{objective}. 
Let \ras{assumption} hold, and 
let $\tilde{\sigma}^* _i$, $\tilde{s}^* _{e,i}$, $\eta^* _i$, $\tilde{r}^* _{e,i} >0$ for all $i \in \mathcal{V}$ and $\epsilon^*_1, \epsilon^*_2, \epsilon^*_3, {\xi}^* _{e}>0$ be the optimal solution of $\tilde{\sigma} _i$, $\tilde{s} _{e,i}$, $\eta _i$, $\tilde{r} _{e,i} > 0$ for all $i \in \mathcal{V}$ and $\epsilon_1, \epsilon_2, \epsilon_3, {\xi} _{e} > 0$, in the following geometric programming: 
\begin{align}
\underset{Z_e >0}{\rm{minimize}}&\ \ g_e (Z_e), \notag \\ 
{\rm subject\ to}\ \ &{\tilde{s}_{e,i}}{\tilde{\sigma}^{-1}_i}  \le c_{3,i}, \ \forall i\in \mathcal{V}
\label{propositionconstone}\\
&\tilde{r}_{c,i}\tilde{r}^{-1}_{e,i} + c_{3,i} \eta_i \tilde{r}^{-1}_{e,i} \le 1, \ \forall i\in {\cal C}, \label{propositionconsttwo}\\
&\tilde{\sigma}_i + \epsilon_1 \leq 1,\ \forall i\in \mathcal{V} \label{propositionconstthree}\\
&\eta_i  + \epsilon_2 \leq 1,\ \forall i\in {\cal C}, \label{eq59} \\
&{\xi}^{\frac{1}{2}}_{e}+\sum_{i \in \mathcal{V}} {p_i (\tilde{r}_{e,i}+\epsilon_3)}{\tilde{s}^{-1}_{e,i}}\le 2p_m^*\bar{d}_m, \ \forall m = \{1,\ldots,M\}, 
\label{propositionconstfour}\\
&\!\!\left(\sum_{i \in \mathcal{V}}{p_i^2}{\tilde{s}^{-1}_{e,i}}\right)\left(\sum_{i \in \mathcal{V}}{(\tilde{r}_{e,i}+\epsilon_3)^2}{\tilde{s}^{-1}_{e,i}}\right) \le {\xi}_{e}
\label{propositionconstfive}
\end{align}
where $Z_e$ is the vector that collects all the decision variables in the optimization problem, i.e., $Z_e = \left[ \tilde{\sigma} _i, \tilde{s} _{e,i}, \eta _i, \tilde{r} _{e,i}, {i \in \mathcal{V}},\epsilon_1, \epsilon_2, \epsilon_3,\xi_e\right]^\mathsf{T}$ and $g_e (\cdot)$ is a given posynomial function. 
Then, the control objective is achieved by applying the event-triggered controller \req{ui}, \req{vij}, in which the event-triggering gains $\sigma_i, \eta_i$ are given by
\begin{align}
\sigma_i &= 1- \tilde{\sigma}^*_i, \ \forall i \in \mathcal{V}, \label{sigmai} \\
\eta_i &= 
\begin{cases}
\eta^* _i, \ \ \ \ \ \ \ \forall i \in {\cal C}, \\
\cfrac{{-\tilde{r}_{c,i}}}{c_{3, i}}\ \ \ \ \forall i \notin {\cal C}. \label{etanotinc}
\end{cases}
\end{align}
\qed
\end{myprop}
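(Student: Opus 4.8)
The plan is to follow the same two-stage pattern as in \rprop{gpproposition}: first extract a clean, closed-form sufficient condition for the control objective \req{objective} under the event-triggered controller with the control gains \emph{fixed} at $k^*_i,l^*_{ij}$, and then verify that the posynomial constraints \req{propositionconstone}--\req{propositionconstfive}, together with the choices \req{sigmai}, \req{etanotinc}, imply that condition while keeping $\sigma_i,\eta_i$ in $(0,1)$. \textbf{Step 1 (a diagonal Lyapunov estimate).} Reusing the derivation in the proof of \rthm{mainresult} up to \req{xdoteq}--\req{eq27}: along any trajectory with $x(0)\in[0,1]^n$ we have $x(t)\in[0,1]^n$ by \rprop{invariantprop}, and $\frac{{\rm d}}{{\rm d}t}V(x(t))\le r^\mathsf{T}x(t)-x(t)^\mathsf{T}Sx(t)-\tfrac12 x(t)^\mathsf{T}PL(I_n-G)(G+H)x(t)$. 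Since $P$, $L$, $I_n-G$ and $G+H$ have non-negative entries and $x(t)\ge 0$, the last quadratic form is $\ge 0$, so $\frac{{\rm d}}{{\rm d}t}V(x(t))\le r^\mathsf{T}x(t)-x(t)^\mathsf{T}Sx(t)$, where $S={\rm diag}(s)$ with $s_i>0$. With the control gains fixed we have $s_i=(1-\sigma_i)c_{3,i}$ and $r_i=\tilde r_{c,i}+\eta_i c_{3,i}$, with $c_{3,i}$ as in \req{const3}. The relevant ``level'' is therefore $\hat\theta:=\max\{p^\mathsf{T}x:\ x^\mathsf{T}Sx\le r^\mathsf{T}x\}$, which is finite because $S$ is positive definite (the set is a bounded ellipsoid containing the origin); completing the square gives $\hat\theta=\tfrac12\sum_i p_ir_i/s_i+\tfrac12\sqrt{(\sum_i r_i^2/s_i)(\sum_i p_i^2/s_i)}$.

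\textbf{Step 2 (the choice on $\mathcal V\setminus\mathcal C$ and \ras{assumption}).} For $i\notin\mathcal C$ we have $\tilde r_{c,i}<0$ by \req{positivec}, and the choice $\eta_i=-\tilde r_{c,i}/c_{3,i}$ in \req{etanotinc} makes $r_i=\tilde r_{c,i}+\eta_i c_{3,i}=0$. Hence those indices drop out of the two sums in $\hat\theta$ carrying a factor $r_i$ (but \emph{not} out of $\sum_i p_i^2/s_i$), leaving $\hat\theta=\tfrac12\sum_{i\in\mathcal C}p_ir_i/s_i+\tfrac12\sqrt{(\sum_{i\in\mathcal C}r_i^2/s_i)(\sum_{i=1}^{n}p_i^2/s_i)}$. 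Moreover $\eta_i=-\tilde r_{c,i}/c_{3,i}>0$, and $\eta_i<1$ is equivalent to $c_{3,i}+\tilde r_{c,i}>0$, which is exactly \ras{assumption}; together with \req{propositionconstthree} (which forces $\sigma_i=1-\tilde\sigma_i\in(0,1)$) and \req{eq59} (which forces $\eta_i=\eta^*_i\in(0,1)$ for $i\in\mathcal C$), the gains \req{sigmai}, \req{etanotinc} satisfy \req{conditioneventtrig}.

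\textbf{Step 3 (the surrogates dominate $\hat\theta$) and conclusion.} Constraint \req{propositionconstone} reads $\tilde s_{e,i}\le c_{3,i}\tilde\sigma_i=c_{3,i}(1-\sigma_i)=s_i$, so $\tilde s_{e,i}$ lower-bounds $s_i$; constraint \req{propositionconsttwo} reads $\tilde r_{c,i}+c_{3,i}\eta_i\le\tilde r_{e,i}$, i.e.\ $0\le r_i\le\tilde r_{e,i}$ for $i\in\mathcal C$. From \req{propositionconstfive}, $\xi_e\ge(\sum_i p_i^2/\tilde s_{e,i})(\sum_{i\in\mathcal C}\tilde r_{e,i}^2/\tilde s_{e,i})$, and substituting into \req{propositionconstfour} gives $\sqrt{(\sum_i p_i^2/\tilde s_{e,i})(\sum_{i\in\mathcal C}\tilde r_{e,i}^2/\tilde s_{e,i})}+\sum_{i\in\mathcal C}p_i\tilde r_{e,i}/\tilde s_{e,i}\le 2p_m^*\bar d_m$ for every $m$. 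Since each term on the left is a monomial nondecreasing in $1/\tilde s_{e,i}$ and in $\tilde r_{e,i}$, the bounds $1/\tilde s_{e,i}\ge1/s_i$ and $0\le r_i\le\tilde r_{e,i}$ make the left-hand side at least $2\hat\theta$, so $\hat\theta\le p_m^*\bar d_m$ for all $m$. To finish, combine Steps 1 and 3: along any trajectory with $x(0)\in[0,1]^n$, whenever $V(x(t))=p^\mathsf{T}x(t)>\hat\theta$ the definition of $\hat\theta$ yields $x(t)^\mathsf{T}Sx(t)-r^\mathsf{T}x(t)>0$, hence $\frac{{\rm d}}{{\rm d}t}V(x(t))<0$; as in the last part of the proof of \rthm{mainresult} (continuity of $V$, compactness of $[0,1]^n$, and strict negativity of $\dot V$ on $\{x\in[0,1]^n:V(x)\ge\hat\theta\}$) this gives $\limsup_{t\to\infty}V(x(t))\le\hat\theta$, and then \req{eq30} gives $\limsup_{t\to\infty}w_m^\mathsf{T}x(t)\le\hat\theta/p_m^*\le\bar d_m$, i.e.\ \req{objective} holds.

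\textbf{Main obstacle.} I expect the delicate part to be Steps 1--2: one must recognize that the non-diagonal Lyapunov term may be discarded \emph{because trajectories remain in the non-negative orthant} (so a non-negative-matrix quadratic form is non-negative there), which is precisely what replaces the $\mathbb{R}^n$-wide optimization of \rthm{mainresult} by the tractable diagonal ellipsoid with its closed-form maximum, and that the choice \req{etanotinc} is dictated by the need to zero out $r_i$ on $\mathcal V\setminus\mathcal C$ so that only $\sum_{i\in\mathcal C}r_i^2/s_i$ (not a sum over all $i$) appears under the square root in $\hat\theta$, with \ras{assumption} being exactly the condition that keeps this $\eta_i$ inside $(0,1)$. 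The remaining step, matching the closed form against \req{propositionconstone}--\req{propositionconstfive}, is routine monomial monotonicity.
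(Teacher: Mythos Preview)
Your proof is correct and follows essentially the same route as the paper: reduce the Lyapunov estimate from $Q$ to the diagonal $S$, compute the resulting ellipsoid maximum in closed form, kill the $r_i$ contributions on $\mathcal{V}\setminus\mathcal{C}$ via the choice \req{etanotinc} (with \ras{assumption} ensuring $\eta_i\in(0,1)$), and then check that the GP constraints \req{propositionconstone}--\req{propositionconstfive} dominate that closed form. The only structural difference is that the paper packages the diagonal reduction into an auxiliary lemma (\rlem{emulationevlemma}) asserting $\mathcal{W}\subseteq\mathcal{W}_e$ and then invokes \rthm{mainresult}, whereas you bound $\dot V$ directly along trajectories and drop the off-diagonal term using $x(t)\ge 0$; your justification of that step is in fact a bit more careful, since the non-negativity of $x^\mathsf{T}PL(I_n-G)(G+H)x$ genuinely relies on $x\ge 0$ rather than on any positive-semidefiniteness of the matrix.
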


\rprop{gppropositione} is proven by modifying the conditions required in \rthm{mainresult} (see \rlem{emulationevlemma} in Appendix~B), so that the conditions required to achieve the control objective can be translated into the posynomial constraints as shown in \req{propositionconstone}--\req{propositionconstfive}. 
For the detailed proof, see Appendix~B. 
\begin{myrem}[On the selection of the cost function $g_e$]
\normalfont 
\textcolor{black}{
For example, one could select the cost function $g_e(\cdot)$ as follows: 
\begin{align}\label{eventtriggerparamcostfunc}
g_e \left( Z_e \right) = \sum_{i\in \mathcal{V}} w_{\sigma, i} \tilde{\sigma}_i + \sum_{i\in \cal{C}} \frac{w_{\eta,i}}{\eta_i},
\end{align}
where $w_{\sigma, i}, w_{\eta,i}>0$ for all $i \in \mathcal{V}$ are given weight parameters. Note that $\tilde{\sigma} _i$ and $\eta_{i}$ for all $i\in{\cal V}$ are the variables satisfying $\tilde{\sigma} _i = 1- \sigma_i$ (see Appendix~B), and that $\sigma_i, \eta_{i}$ for all $i\in{\cal V}$ are the event-triggering gains. 
Hence, increasing $\sigma_i$ (resp. $\eta_{i}$) implies to reduce the cost of $\tilde{\sigma} _i$ (resp. $\eta_{i}^{-1}$). From \req{triggeringtime}, increasing $\sigma_i$ and $\eta_{i}$ allow us to reduce the number of the control updates. Therefore, minimizing \req{eventtriggerparamcostfunc} subject to the constraints \req{propositionconstone}--\req{propositionconstfive} implies to obtain large event-triggering gains so as to reduce the number of the control updates while achieving the control objective.} \qedwhite 
\end{myrem}
\textcolor{black}{
\section{Some discussions on the proposed approach} 
In this section, we provide some discussions on the proposed approach. In \rsec{designlyapunovsec}, we provide a way of how to design the Lyapunov parameter $p$. 
In \rsec{conservativenesssec}, we discuss a conservativeness of the geometric programming problems in \rprop{gpproposition} and 3 with respect to the condition derived in Theorem~1. 
\subsection{On designing the Lyapunov parameter} \label{designlyapunovsec}
Note that the Lyapunov parameter $p \in \mathbb{R}^n _{>0}$ should be \textit{given} in \rprop{gpproposition} (and \rprop{gppropositione}), which means that $p$ must be chosen \textit{apriori} before designing the control and the event-triggering gains. For example, one could choose $p = [p_1, \ldots, p_n]^\mathsf{T}$ by solving the following linear program: 
\begin{align}\label{designlyapunov}
    \underset{p >0,\  \|p\|_1 = c_p}{\rm minimize} \ \ \sum_{i \in \mathcal{V}}\left (-p_i\ubar{\delta}_i + \sum_{j\in \mathcal{N}^{\rm out}_i} p_j\bar{\beta}_{ij}\right ), 
\end{align}
where $c_p>0$ is a given positive constant. Since $\tilde{r}_{c, i} = -p_i \underline{\delta}_i + 
\sum_{j\in \mathcal{N}^{\rm out}_i} p_j \overline{\beta}_{ij}$
for all $i \in {\cal V}$ (see \req{rtildeci}), the optimization problem \req{designlyapunov} aims at finding $p$ such that $\sum_{i \in \mathcal{V}} \tilde{r}_{c, i}$ is minimized. Moreover, recall that the vector $\tilde{r}_{c} = [\tilde{r}_{c, 1}, \ldots, \tilde{r}_{c, n}]$ has been utilized in the derivative of the Lyapunov function under the continuous time controller: 
\begin{align}
 \cfrac{{\rm d}}{{\rm d}t} {V}({x}) \leq \tilde{r}^\mathsf{T} _c {x}- {x}^\mathsf{T}\tilde{S}_c {x}
\end{align}
(see \req{upperboundcontinuous} in the Appendix). Hence, intuitively, if we have smaller components of $\tilde{r}_{c, i}$, $i \in {\cal V}$, then the term $\tilde{r}^\mathsf{T} _c {x}$ becomes smaller and so we can obtain a larger domain of $x$ for which ${\rm d}{V}/{\rm d}t$ is ensured to be negative: $\{x\in [0, 1]^n : \tilde{r}^\mathsf{T} _c {x}- {x}^\mathsf{T}\tilde{S}_c {x} < 0\}$. Thus, designing $p$ such that $\tilde{r}_{c, i}$ becomes small may have the potential to enlarge the domain of attraction. Note that it is indeed difficult to take the matrix $\tilde{S}_c$ into account for designing $p$, since $\tilde{S}_c$ must satisfy the constraint involving the control gains (on the other hand, $\tilde{r}_{c, i}$ does not depend on the control gains).}
\textcolor{black}{In \req{designlyapunov}, the linear constraint $\|p\|_1 = p_1 + p_2 + \cdots p_n = c_p$ has been utilized to normalize the Lyapunov parameter, so that the sum of all the components of $p$ equals $c_p$. In essence, this avoids the case where the optimal solution of $p$ becomes extremely close to zero. 
%
For example, if $\bar{\beta}_{ii} > \ubar{\delta}_i$ for all $i\in \mathcal{N}$, then the cost in \req{designlyapunov} is positive for all $p>0$. Hence, if $\|p\|_1 = c_p$ \textit{were} not given, we could then obtain the optimal solution as $p^* \approx 0$, since it tries to make the cost in \req{designlyapunov} as close as possible to $0$, i.e., $\sum_{i \in \mathcal{V}} (-p_i\ubar{\delta}_i + \sum_{j\in \mathcal{N}^{\rm out}_i} p_j\bar{\beta}_{ij}) \rightarrow 0$ as $p \rightarrow 0$.}

\textcolor{black}{
One might wonder how to select $c_p$ in \req{designlyapunov}. 
Here, \textit{without loss of generality}, we can set 
$c_p = 1$; how we select $c_p$ does not affect the domain of the control gains $k_i, l_{ij}, i\in \mathcal{V}, j \in \mathcal{N}^{\rm out}_i$ (resp. the event-triggering gains $\sigma_i, \eta_i$, $i \in \mathcal{V}$) for which the geometric programming problem in Proposition~2 (resp. Proposition~3) is feasible. Specifically, we have the following result:}
\textcolor{black}{
\begin{myprop}\label{feasibilityresult}
\normalfont
Let $p^{(1)} = [p^{(1)}_{1}, p^{(1)}_{2},\ldots, p^{(1)}_n]$ and $p^{(2)} = [p^{(2)}_{1}, p^{(2)}_{2},\ldots, p^{(2)}_n]$ denote the optimal solution of \req{designlyapunov} with $c_p= \gamma^{(1)}$ and $c_p = \gamma^{(2)}$, respectively, where $\gamma^{(1)}, \gamma^{(2)} >0$ with $\gamma^{(1)} \neq \gamma^{(2)}$ are any positive constants. Let {(P.1)} and {(P.2)} (resp. (Q.1) and (Q.2)) denote the geometric programming problem in Proposition~2 (resp. Proposition~3) with the Lyapunov parameter being given by $p= p^{(1)}$ and $p= p^{(2)}$, respectively. 
Then, it follows that the feasibility of {(P.1)} (resp. {(Q.1)}) implies the feasibility of {(P.2)} (resp. {(Q.2)}), and vice versa. \qedwhite 
\end{myprop}
\rprop{feasibilityresult} implies that the domain of the control gains (resp. event-triggering gains) for which the geometric programming problem in Proposition~2 (resp. Proposition~3) with $p= p^{(1)}$ is feasible equals the one with $p= p^{(2)}$. Hence, if the cost function of the geometric programming problem in Proposition~2 depends only on the control gains, i.e., $g_c (\tilde{k}_i, \tilde{l}_{ij}, i \in \mathcal{V}, j \in {\cal N}^{\rm out} _i)$, then the optimal solution with $p= p^{(1)}$ equals the one with $p= p^{(2)}$.
In general, we define the cost functions depending only on the control gains, since we would like to optimize these parameters (see \req{controlparamcostfunc} as an example of the cost function). 
Similarly, if the cost function of the geometric programming problem in Proposition~3 depends only on the event-triggering gains, i.e., $g_e (\tilde{\sigma}_i, \eta_i, i \in \mathcal{V})$, then the optimal solution with $p= p^{(1)}$ equals the one with $p= p^{(2)}$ (see \req{eventtriggerparamcostfunc} as an example of the cost function).
For the proof of \rprop{feasibilityresult}, see Appendix~C.}

\textcolor{black}{
\subsection{On the conservativeness of \rprop{gpproposition} and 3}\label{conservativenesssec}
In this section, we discuss the potential conservativeness of Proposition~2 and~3. Note that the posynomial constraints in \rprop{gpproposition} and 3 are given as the \textit{sufficient} conditions to those in \rthm{mainresult}. 
Hence, it is worth discussing how the posynomical constraints derived in \rprop{gpproposition} and 3 are conservative with respect to those in \rthm{mainresult}. 
For deriving the posynomial constraints of the control gains (\rprop{gpproposition}), the sufficiency has arisen since the term $\sum_{i\notin\mathcal{C}}\frac{-p_i\tilde{r}_{c,i}}{\tilde{s}_{c,i}}$ in the right hand side of \req{gaincalconst} has been replaced by $\sum_{i\notin\mathcal{C}}\frac{-p_i\tilde{r}_{c,i}}{c_{1,i}}$ in \req{gaincalconst3}, using the inequalities $\frac{-p_i\tilde{r}_{c,i}}{c_{1,i}} \leq \frac{-p_i\tilde{r}_{c,i}}{\tilde{s}_{c,i}}$ for all $i \notin {\cal C}$. This conservative operation (i.e., from \req{gaincalconst} to \req{gaincalconst3}) has been taken for all $i \notin \mathcal{C}$. 
Thus, the conservativeness for designing the control gains from Proposition~2 increases as the number of the nodes $i$ satisfying $i \notin \mathcal{C}$ (i.e., the cardinality of $\mathcal{V}\backslash\mathcal{C}$) increases. In other words, the conservativeness decreases as the number of the nodes $i$ satisfying $i \in \mathcal{C}$ (i.e., the cardinality of $\mathcal{C}$) increases. Recall that $\mathcal{C}$ is defined as $\mathcal{C}= \{i \in \mathcal{V} : \tilde{r}_{c, i}\ge 0\}$, where $\tilde{r}_{c, i} = -p_i\underline{\delta}_i + \sum_{j\in \mathcal{N}^{\rm out}_i} p_j\bar{\beta}_{ij}$ (see \req{positivec}). 
Hence, $i \in \mathcal{C}$ implies 
\begin{align}
-p_i\ubar{\delta}_i+ \sum_{j\in \mathcal{N}^{\rm out}_i} p_j\bar{\beta}_{ij} = p_i(\bar{\beta}_{ii}-\ubar{\delta}_i) + \sum_{j\in \mathcal{N}^{\rm out}_i\backslash \{i\}} p_j\bar{\beta}_{ij}\geq 0. \label{cin}
\end{align}
If $\bar{\beta}_{ii} \geq \ubar{\delta}_i$ (i.e., the baseline infection rate for the node itself is larger than the natural recovery rate), then \req{cin} holds. 
Hence, as the number of nodes satisfying $\bar{\beta}_{ii} \geq \underline{\delta}_i$ increases, which may be the case where a huge outbreak (i.e., the infection rates are large) happens, the conservativeness for designing the control gains from Proposition~2 decreases.}

\textcolor{black}{
For deriving the posynomial constraints of the event-triggering gains (Proposition~3), the sufficiency has arisen in \req{constrelax}, since we have used the following inequality: ${x}^\mathsf{T}Qx - r^\mathsf{T} x \geq {x}^\mathsf{T} S x - r^\mathsf{T} x$, 
where $Q = S+ \frac{1}{2} PL (I_n - G) (G+H)$ (see \req{shat}). The term $\frac{1}{2} PL (I_n - G) (G+H)$ is the matrix that enumerates the coefficiencies of the cross term $x_i x_j,\ j \in \mathcal{N}^{\rm in} _i$ in the derivative of $x_i$ (see \req{xeq}). 
Since we neglect this term for deriving the posynomial constraints, it implies that $\sigma_i, \eta_i$ will be more conservatively selected as the number of the in-neighbor nodes for node $i$ is larger. 
In other words, if the number of the in-neighbor nodes for node $i$ is very large, very small $\sigma_i, \eta_i$ could be obtained, which could result in frequent control updates (or the geometric programming problem in Proposition~3 may become infeasible).} 

\textcolor{black}{If the geometric programming problem of finding the control gains (\rprop{gpproposition}) is not feasible, we have no choice but could try to change the candidate Lyapunov function (i.e., modify the parameter $p$), or, if allows, try to enlarge the upper bound of the control gains $\bar{k}_i, \bar{l}_{ij}$ in \req{constraintcontrolparameters} so as to increase the feasibility domain of the geometric programming problem in \rprop{gpproposition}. 
If \rprop{gpproposition} is feasible but \rprop{gppropositione} is not feasible, we could modify the cost function in Proposition~2 (e.g., change the weight parameters of \req{controlparamcostfunc}), or, slightly tighten the constraints in \rprop{gpproposition} so as to make \rprop{gppropositione} feasible. More specifically, in \rprop{gpproposition}, we replace the constraints \req{gpthree}, \req{gpone}, \req{gptwo} with 
\begin{align}
   & \tilde{s}_{c,i}+ (1-\epsilon_s)\left ( p_i\tilde{k}_i + \sum_{j\in \mathcal{N}^{out}_i}p_j\tilde{l}_{ij}\right )\le c_{1,i},\ \forall i \in \mathcal{V} \label{gpmodified1}\\ 
     &{\xi}_c ^{\frac{1}{2}}+\sum_{i\in \mathcal{C}}{p_i(\tilde{r}_{c,i} + \epsilon_r +\epsilon_3)}{\tilde{s}^{-1}_{c,i}}\le c_{2,m},\ \forall m\in \{1,\ldots, M\}\label{gpmodified3}\\
  &\left(\sum_{i \in \mathcal{V}}{p_i^2}{\tilde{s}^{-1}_{c,i}}\right)\left(\sum_{i \in \mathcal{V}}{(\tilde{r}_{c,i}+\epsilon_r + \epsilon_3)^{2}}{\tilde{s}^{-1}_{c,i}}\right) \le {\xi}_c,\label{gpmodified2}
\end{align}
where $\epsilon_s, \epsilon_r \in (0, 1)$ are {given} positive constants. Note that setting $\epsilon_s, \epsilon_r \rightarrow 0$ in \req{gpmodified1}, \req{gpmodified3} and \req{gpmodified2} corresponds to \req{gpthree}, \req{gpone} and \req{gptwo}, respectively.} 
\textcolor{black}{Let $\tilde{k}^*_i, \tilde{l}^*_{ij}, \tilde{s}^*_{c,i} >0$ for all $i \in \mathcal{V}$, $j \in {\cal N}^{\rm out} _i$ and $\epsilon^*_1, \epsilon^*_2, \epsilon^*_3, {\xi}^*_{c} >0$ denote an any feasible solution to the posynomial constraints \req{gpfour}, \req{gpfive}, \req{gpmodified1}--\req{gpmodified2}. 
The corresponding control gains are denoted as $k^* _i = \bar{k}_i-\tilde{k}^*_i, \ l^*_{ij} =  \bar{l}_{ij} -\tilde{l}^* _{ij}$. 
Then, it follows that 
\begin{align}
    &\tilde{s}^* _{c,i}+ (1-\epsilon_s)\left ( p_i\tilde{k}_i + \sum_{j\in \mathcal{N}^{out}_i}p_j\tilde{l}_{ij}\right )\le c_{1,i}, \notag \\
    &\Longleftrightarrow \tilde{s}^* _{c,i} (1-\epsilon_s)^{-1} \leq p_i{k}^* _i + \sum_{j\in \mathcal{N}^{out}_i}p_j{l}^*_{ij} = c_{3,i}. \label{cond1}
\end{align}
Moreover, if $\eta_i = \epsilon_r/c_{3,i}$, it follows that  \begin{align}
    & \tilde{r}_{c,i} + c_{3,i} \eta_i =  \tilde{r}_{c,i} + \epsilon_r\ \Longleftrightarrow\ \tilde{r}_{c,i} (\tilde{r}_{c,i} + \epsilon_r)^{-1} + c_{3,i} \eta_i (\tilde{r}_{c,i} + \epsilon_r)^{-1}  =  1 \label{cond2}
\end{align}
Additionally, from \req{gpone} and \req{gptwo}, it follows that 
\begin{align}
& {\xi}_c ^{*\frac{1}{2}}+\sum_{i\in \mathcal{C}}{p_i(\tilde{r}_{c,i} + \epsilon_r +\epsilon^*_3)}{\tilde{s}^{*-1}_{c,i}}\le c_{2,m},\ \forall m\in \{1,\ldots, M\} \label{cond3}\\
&\left(\sum_{i \in \mathcal{V}}{p_i^2}{\tilde{s}^{*-1}_{c,i}}\right)\left(\sum_{i \in \mathcal{V}}{(\tilde{r}_{c,i}+\epsilon_r + \epsilon^*_3)^{2}}{\tilde{s}^{*-1}_{c,i}}\right) \le {\xi}^*_c \label{cond4}
\end{align}
Now, consider the feasibility problem provided in Proposition~3: find $\tilde{\sigma} _i$, $\tilde{s} _{e,i}$, $\eta _i$, $\tilde{r} _{e,i} > 0$ for all $i \in \mathcal{V}$ and $\epsilon_1, \epsilon_2, \epsilon_3, {\xi} _{e} > 0$ such that \req{propositionconstone}--\req{propositionconstfive} hold. 
Suppose that $\epsilon_r$ is chosen small enough such that $\epsilon_r < c_{3, i}$ for all $i\in \mathcal{V}$. Then, 
\req{cond1}, \req{cond2}, \req{cond3} and \req{cond4} imply that the posynomical constraints \req{propositionconstone}--\req{propositionconstfive} are all feasible with 
\begin{align}
    &\tilde{s}_{e,i} = \tilde{s}^* _{c,i},\ \tilde{\sigma}_i = 1-\epsilon_s,\ \tilde{r}_{e,i} = \tilde{r}_{c,i} + \epsilon_r,\  \eta_i = \epsilon_r / c_{3, i} \\
    &\xi_e = \xi^*_c,\ \epsilon_1 = \epsilon_s,\ \epsilon_2 = 1- \epsilon_r / c_{3,i},\ \epsilon_3 = \epsilon^*_3. \ 
\end{align}
Therefore, if we utilize the slightly tightened constraints in Proposition~2, and if it is feasible, we can guarantee the feasibility of the posynomical constraints in Proposition~3.}

\textcolor{black}{Even though Proposition~3 becomes feasible, it is still possible that, due to the conservativeness as described above, very small event-triggering gains $\sigma_i, \eta_i$ might be obtained. 
In such case, we could make use of the result of Theorem~1 in order to reduce the conservativeness. That is, if the resulting $\sigma_i, \eta_i$ according to Proposition~3 are very small for some $i$, we could try to increase these parameters (i.e., $\sigma_i \leftarrow \sigma_i + \epsilon_{\sigma}, \eta_i \leftarrow \eta_i + \epsilon_{\eta}$ for some $\epsilon_{\sigma}, \epsilon_{\eta}>0$ and the other parameters are fixed) and then check if \req{thetastar} in \rthm{mainresult} holds. If \req{thetastar} holds, then it follows that the control objective is achieved even with the modified event-triggering gains. 
Since the condition in Theorem~1 is less conservative than those in Proposition~3, we have the potential to enlarge the event-triggering gains. Note that \req{thetastar} can be checked via convex program, since all the control and the event-triggering gains are here given. While the above approach may be somewhat heuristic, it will be useful in practice for reducing the conservativeness of Proposition~3.}

\section{Numerical simulations}\label{numericalsimulationsec}
In this section, we demonstrate the performance of the proposed event-triggered controller through a numerical simulation. \textcolor{black}{The simulation have been conducted on MacOS Big Sur, 8-core Intel Core i9 2.4GHz, 32GB RAM using Python 3.} \textcolor{black}{Moreover, we used CVXPY for solving convex optimization problems. The code is available on Github: \textcolor{black}{\url{https://github.com/yugaro/etc-sis-solver}}.}

\smallskip
\textit{(Problem setup):} 
\textcolor{black}{We apply the proposed approach against the epidemic spreading propagated over an air transportation network consisting of 50 airports in the United States (U.S.).} The graph is constructed from the statistical data \citep{boarding2019all,openflights2019airport} of the number of passengers and flights. More specifically, we extract the data from \citep{boarding2019all} the top 50 U.S. airports according to the number of passengers in 2019, and from \citep{openflights2019airport} the number of the flights among the airports. 
\textcolor{black}{The resulting graph consists of 50 nodes (i.e., ${\cal V} = \{1, \ldots, 50\}$) that represent the set of 50 airports, and the set of edges among different nodes that represent the existence of the directed flights among them.} 
\textcolor{black}{\rfig{airtransportnetwork} depicts the resulting graph structure, where the nodes in the network are divided into three groups denoted by ${\cal V}_1$, ${\cal V}_2$, ${\cal V}_3$ $\subseteq {\cal V}$, which are called Group~1, 2 and 3, respectively.}
\begin{figure}[tbp]
  \centering
  \includegraphics[width = 12cm]{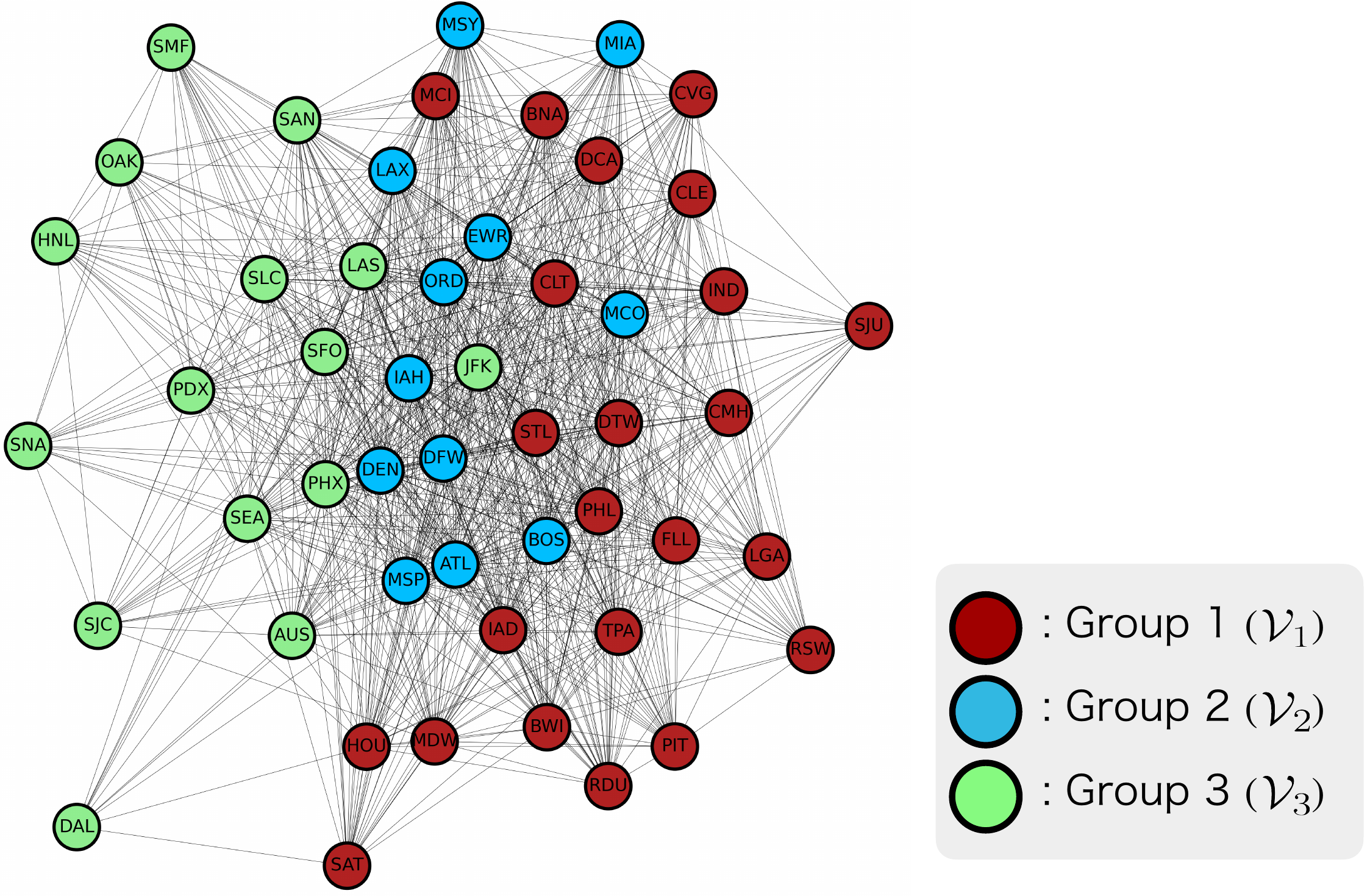}
  \caption{\textcolor{black}{The graph structure represents an air transport network consisting of 50 airports in the U.S.\citep{boarding2019all,openflights2019airport}}. The directions of the edges in the graph are omitted for brevity. \textcolor{black}{All the nodes are labeled by the world airport codes (IATA 3-letter codes)}, and divided by the three groups colored by red (Group~1), blue (Group~2), and green (Group~3).  
  }
  \label{airtransportnetwork}
\end{figure}

In this numerical simulation, we randomly choose the baseline recovery rate $\ubar{\delta}_i$ of each node $i\in \mathcal{V}$ from a uniform distribution on the interval 
$[8.0\times 10^{-2}, 1.0\times 10^{-1}]$. 
On the other hand, each baseline infection rate from the neighbor $\bar{\beta}_{ij}$ ($i \in {\cal V}$, $j \in {\cal N}^{\rm out}_{i}\backslash \{i\}$) is chosen on the interval $(0, 5.0\times 10^{-2}]$ in accordance with the data \citep{openflights2019airport}.
More specifically, each $\bar{\beta}_{ij}$ ($i \in {\cal V}$, $j \in {\cal N}^{\rm out}_{i}\backslash \{i\}$) is chosen to be proportional to the number of the direct flights from the airport corresponding to node $i \in \mathcal{V}$ to the one corresponding to node $j\in\mathcal{N}_i^{\rm out}\backslash \{i\}$, and is normalized such that the maximum value of $\bar{\beta}_{ij}$ ($i \in {\cal V}$, $j \in {\cal N}^{\rm out}_{i}\backslash \{i\}$) equals $5.0\times 10^{-2}$. \textcolor{black}{Moreover, each baseline infection rate for the node itself $\bar{\beta}_{ii}$, $i \in \mathcal{V}$ is chosen to be proportional to the population of the city that contains the airport corresponding to the node $i$, and is normalized such that the maximum value of $\bar{\beta}_{ii}$ ($i \in {\cal V}$) equals to $5.0\times 10^{-2}$.} 

The control objective in this numerical simulation is to achieve \req{objectivegroup} for all $m \in \{1, 2, 3\}$ with $\bar{x}_1 = 8.0\times 10^{-2}, \bar{x}_2 =  0.1$, $\bar{x}_3 = 9.0 \times 10^{-2}$. In other words, we aim at containing the average of the fraction of the infected people in Group~1, 2, and 3 within the corresponding thresholds $\bar{x}_1 = 8.0\times 10^{-2}, \bar{x}_2 =  0.1$, and $\bar{x}_3 = 9.0 \times 10^{-2}$, respectively. As described in \rsec{ControlObjective}, such control objective can be expressed by \req{objective} with appropriate selection of the parameters $w_m \in \{0, 1\}^n$ and  $\bar{d}_m$ for each $m \in \{1, 2, 3\}$. 
\begin{figure}[tbp]
  \centering
  \subfigure[Event-triggered controller.]{
    \centering
    \includegraphics[width = 0.48\hsize]{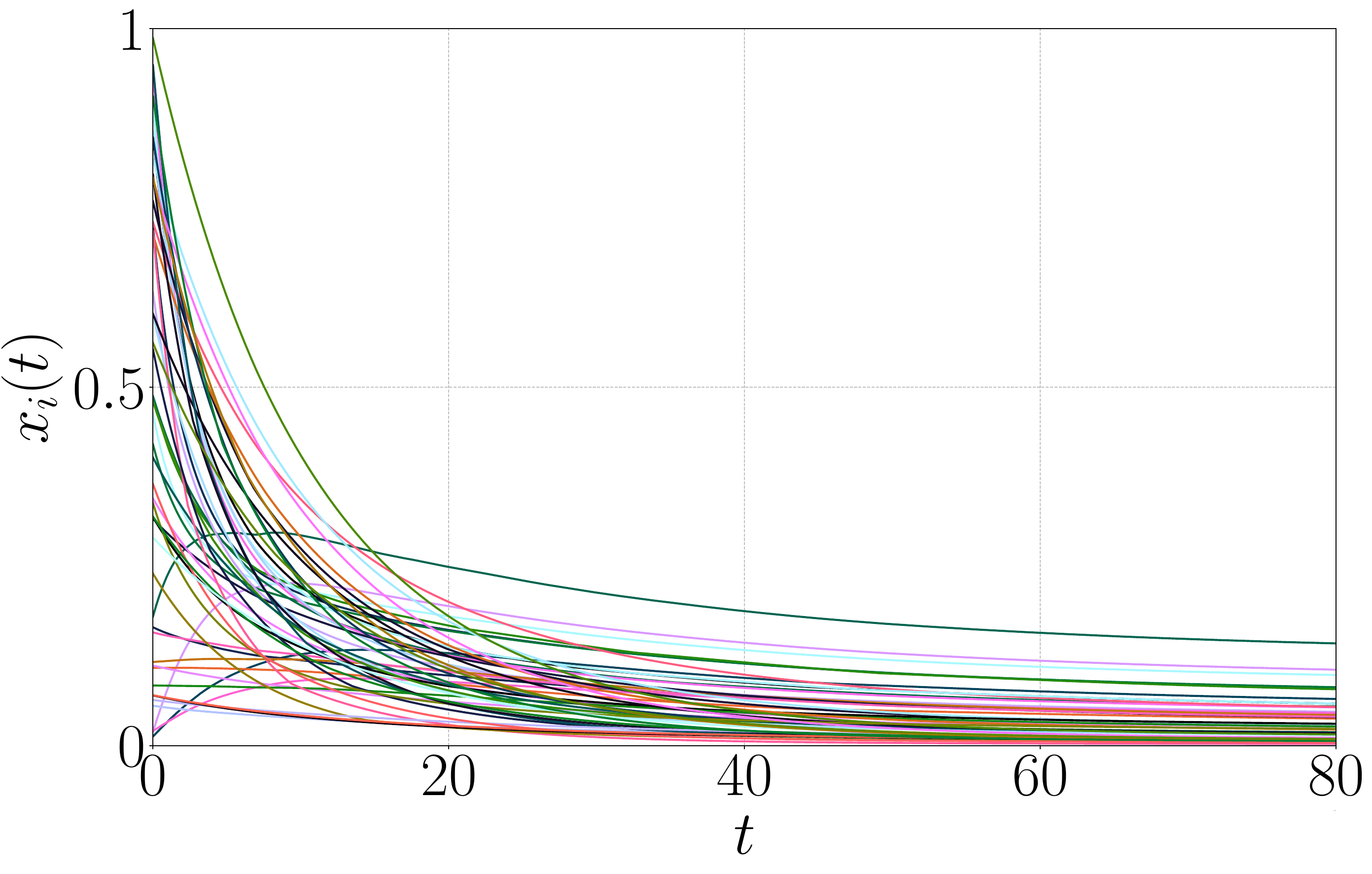}
    \label{statetrajectoryallcontrol}
  }
    \subfigure[Without control inputs.]{
    \centering
    \includegraphics[width = 0.48\hsize]{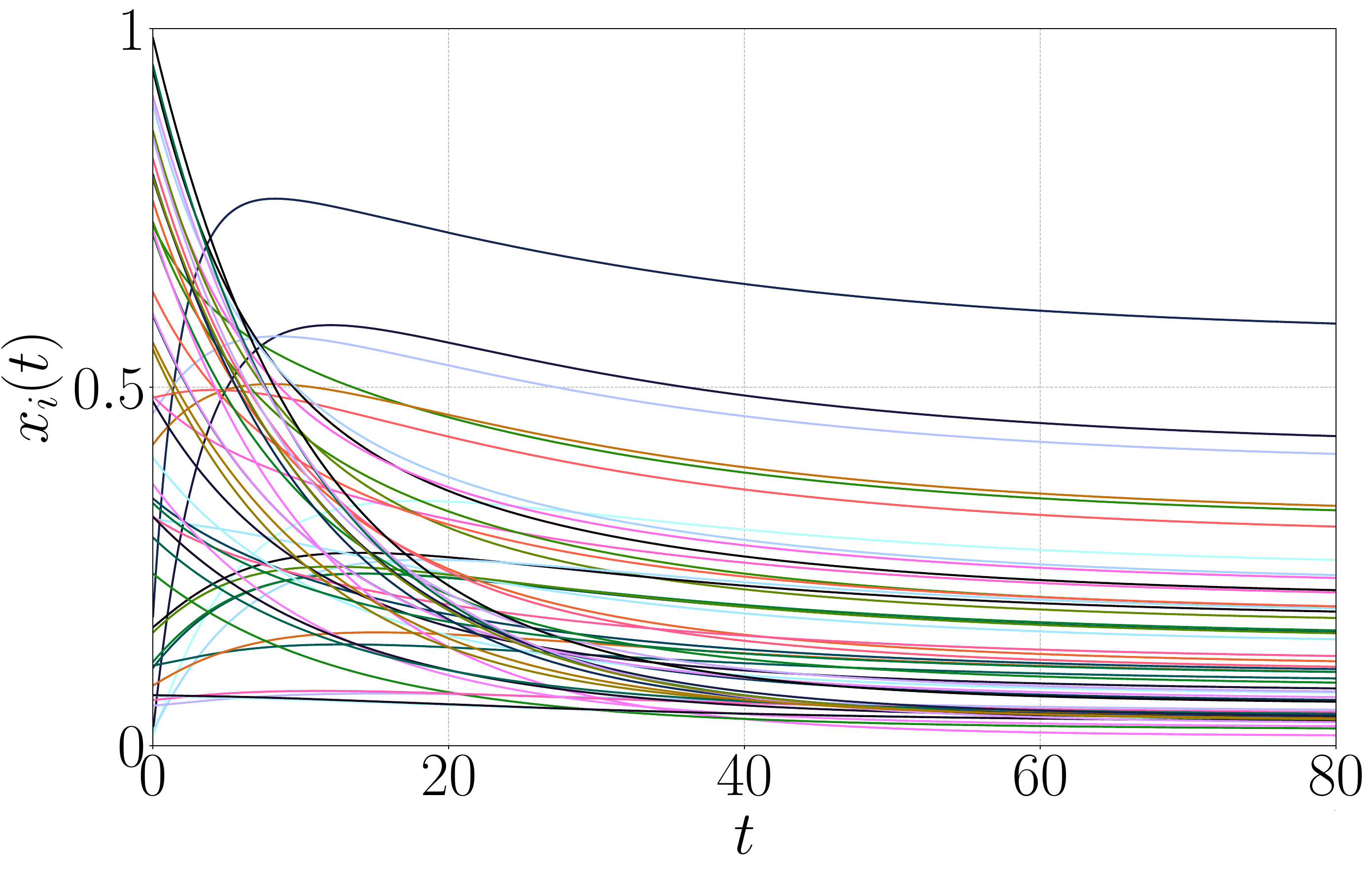}
    \label{statetrajectoryallnoinput}
    }
  \caption{State trajectories by applying the event-triggered controller designed by the emulation-based approach proposed in \rsec{eventtrigdesignsec}, and the state trajectories without control inputs, i.e., $u_i(t) = 0$ and $v_{ij}(t) = 0$ for all $i \in {\cal V}$, $j \in {\cal N}^{\rm out} _i$, and $ t \geq 0$.}
 \label{statetrajectoryall}
\end{figure}


The upper bounds of the control gains are given by $\bar{k}_i = 5.2\times 10^{-1}$, $\bar{l}_{ij} = 5.4\times 10^{-2}$ for all $i\in \mathcal{V}$ and $j\in \mathcal{N}_i^{\rm out}$. The Lyapunov parameter $p \in \mathbb{R}^n _{>0}$ in \req{eq11} is chosen by solving \req{designlyapunov}. When solving the geometric programming problem to find the optimal control gains from \rprop{gpproposition}, we define the cost function $g_c (\cdot)$ by \req{controlparamcostfunc} with $w_{k,i} = w_{l,ij} = 1$ for all $i \in \mathcal{V}, j \in \mathcal{N}^{\rm out} _i$. 
In addition, when solving the geometric programming problem to find the optimal event-triggering gains from \rprop{gppropositione}, 
we define the cost function $g_e(\cdot)$ by \req{eventtriggerparamcostfunc} with $w_{\sigma, i} = w_{\eta, i} =1$ for all $i \in \mathcal{V}$. 

\begin{figure}[tbp]
  \centering
  \subfigure[$m=1$]{
    \centering
    \includegraphics[width = 0.5\hsize]{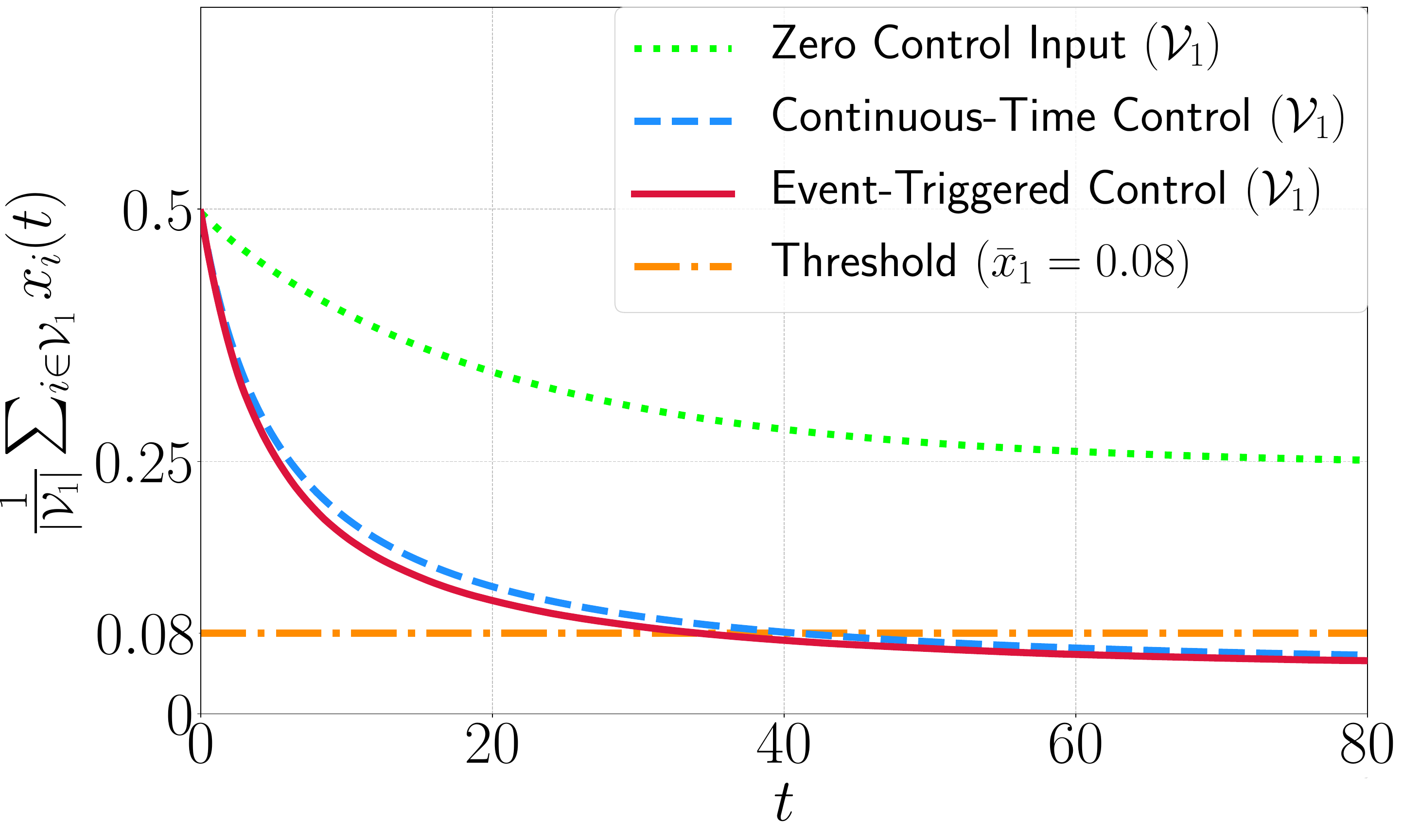}
    \label{statetrajectorycommunity1}
    }
  \subfigure[$m=2$]{
    \centering
    \includegraphics[width = 0.48\hsize]{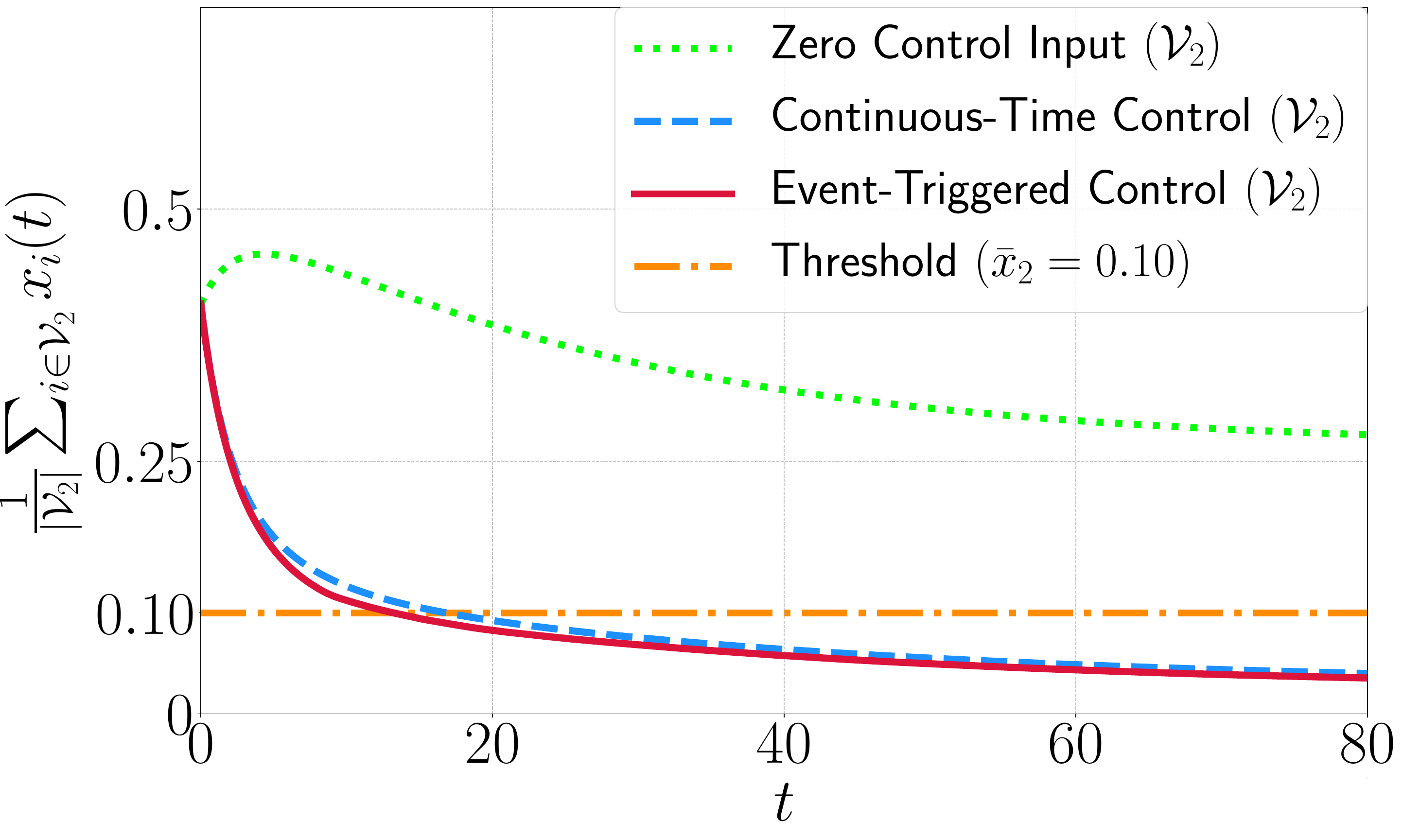}
    \label{statetrajectorycommunity2}
  }
  \subfigure[$m=3$]{
    \centering
    \includegraphics[width = 0.48\hsize]{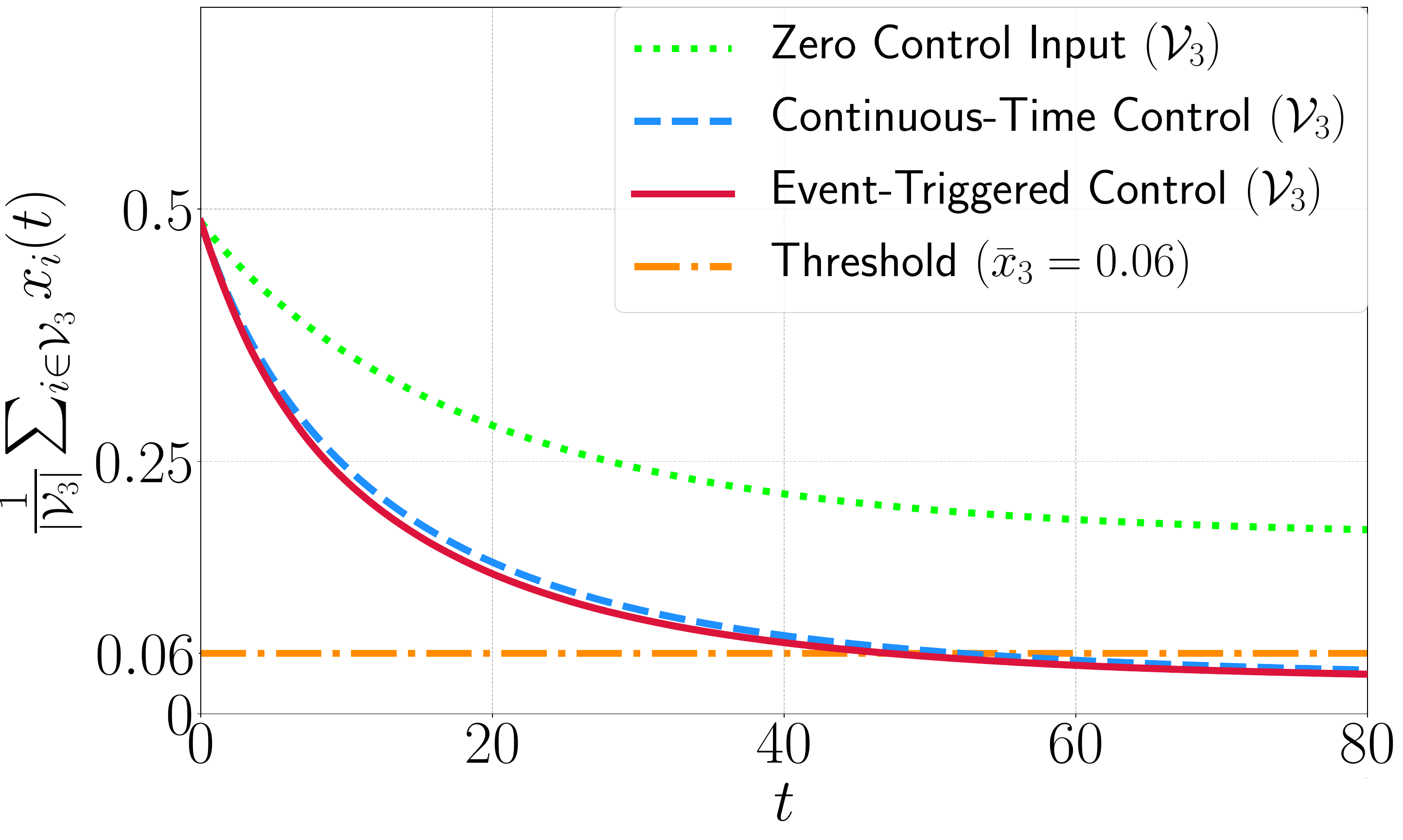}
    \label{statetrajectorycommunity3}
  }
  \caption{Trajectories of $|\mathcal{V}_m|^{-1}\sum_{i\in \mathcal{V}_m}x_i(t)$ for all $m \in \{1, 2, 3 \}$ under the proposed event-triggered controller (red solid line), without control inputs (green dotted line), and the continuous-time controller (blue dashed line) whose control gains are the same as the event-triggered controller. 
  The dash-dotted orange lines represent the thresholds $\bar{x}_1$, $\bar{x}_2$, and $\bar{x}_3$ for the control objective in the numerical simulation.}
  \label{statetrajectorycommunity}
\end{figure}

\begin{figure}[t]
  \centering
  \subfigure[Control inputs of $u_i(t)$ for $i\in\mathcal{V}_1$.]{
    \centering
    \includegraphics[width = 0.48\hsize]{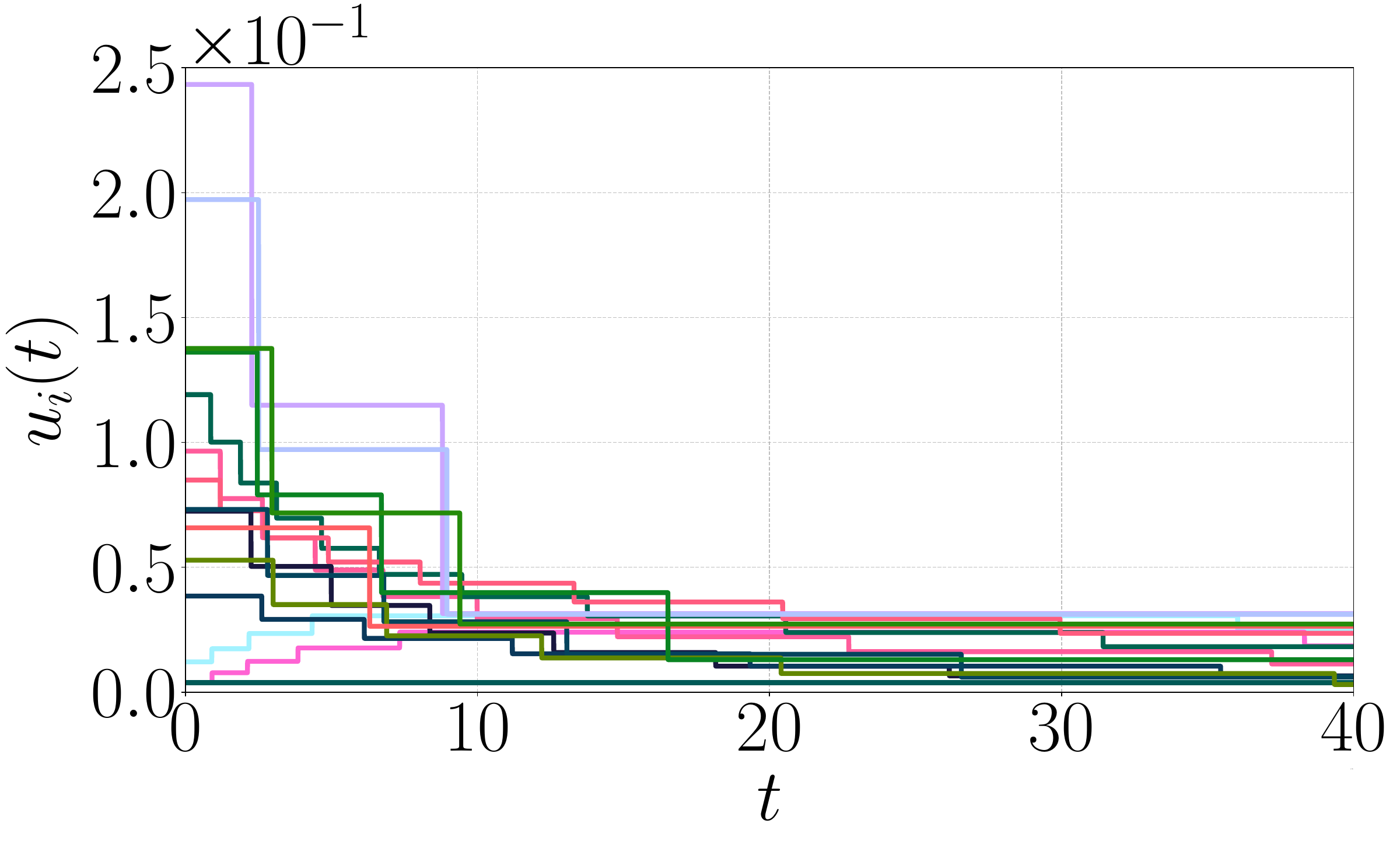}
    \label{controllertrajectoryrecovery}
    }
  \subfigure[Control inputs of $v_{ij}(t)$ for $i\in\mathcal{V}_1$.]{
    \centering
    \includegraphics[width = 0.48\hsize]{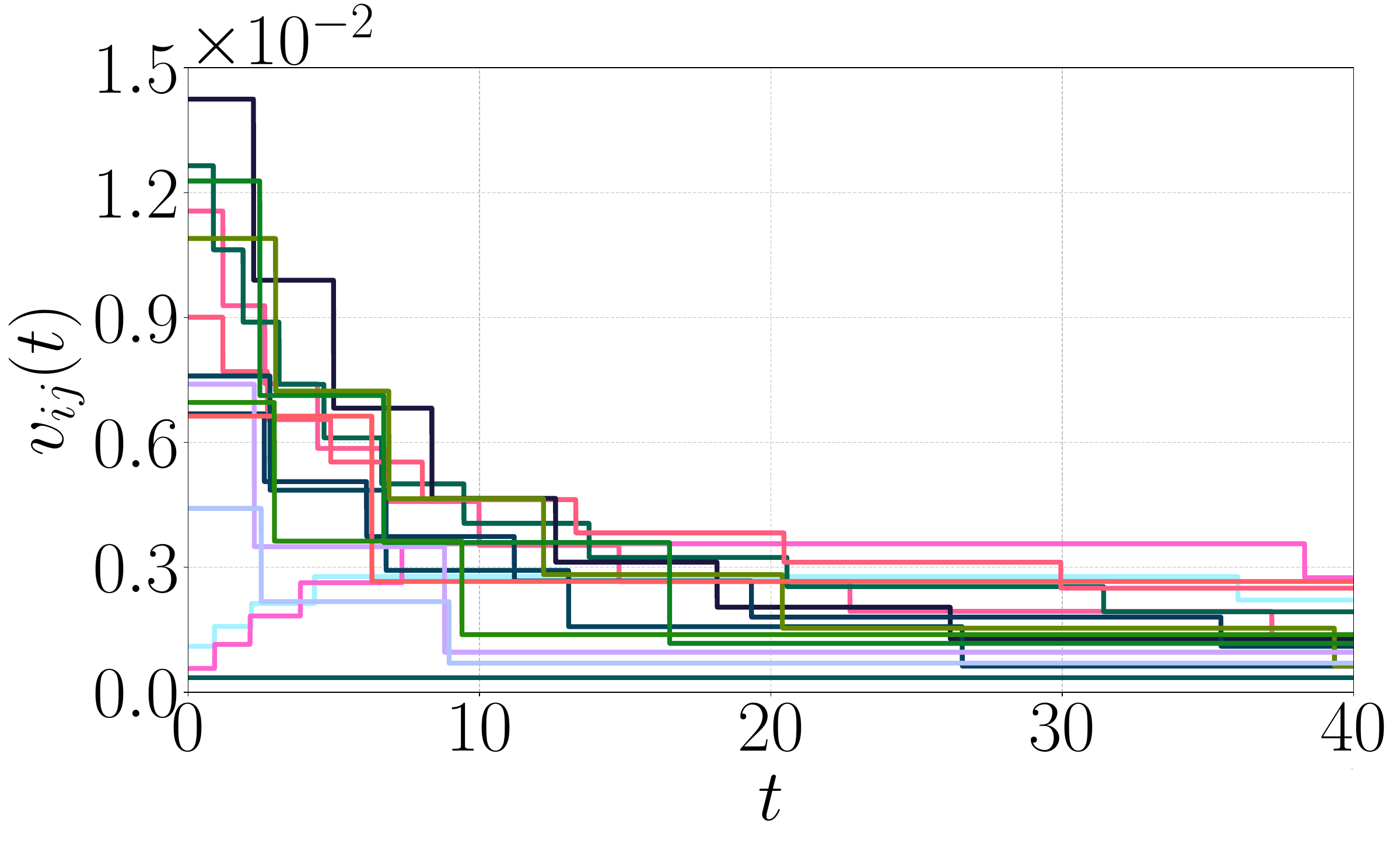}
    \label{controllertrajectoryinfection}
  }
  \subfigure[Inter-event times $t^i_{\ell + 1} - t^i_{\ell}$ for $i\in\mathcal{V}_1$.]{
    \centering
    \includegraphics[width = 0.48\hsize]{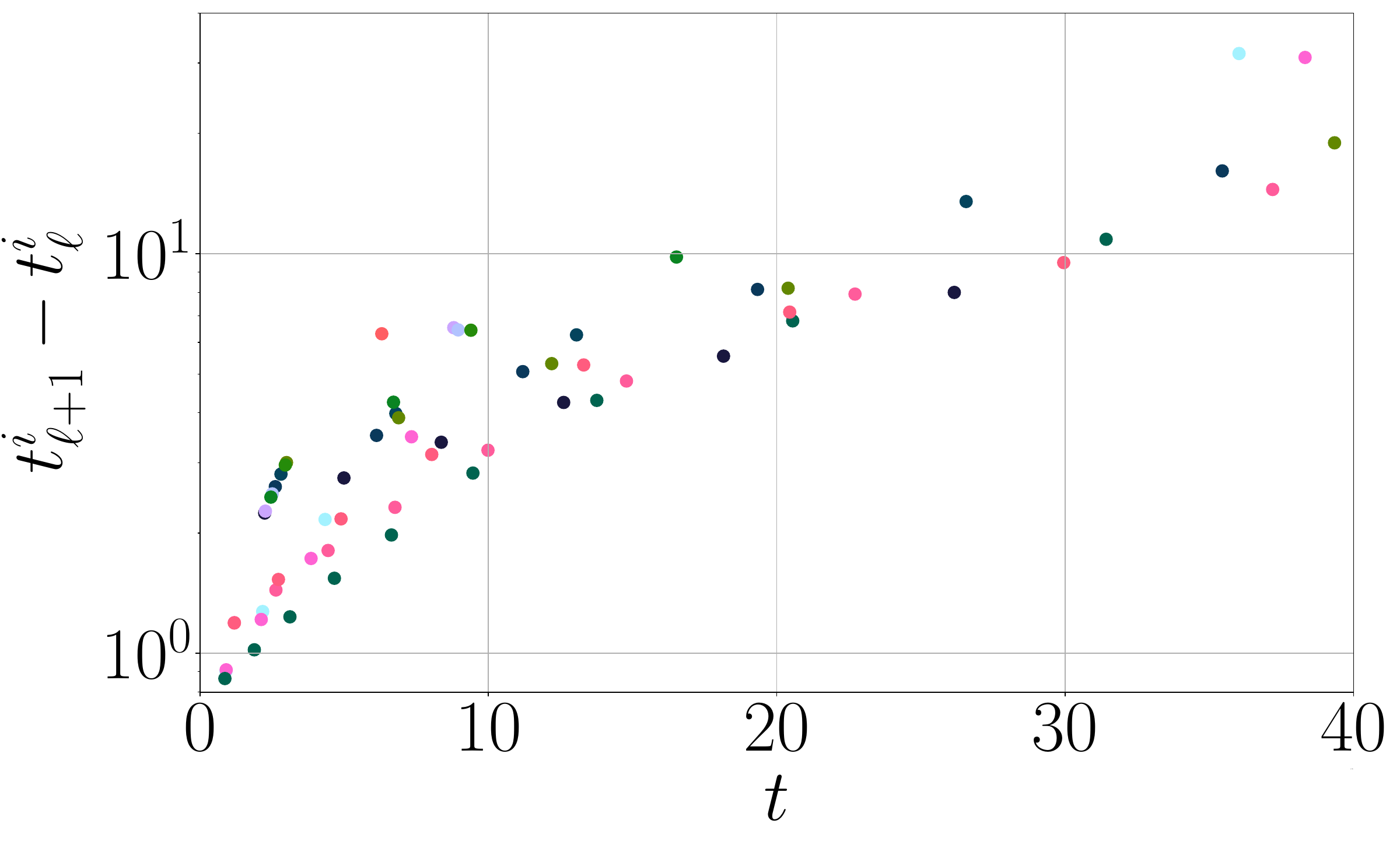}
    \label{controllertrajectorytriggering}
  }
  \caption{Control inputs $u_i(t)$, $v_{ij}(t)$ and the corresponding inter-event times $t_{\ell + 1}^i - t_{\ell}^i$, $\ell \in \mathbb{N}$ for $i\in\mathcal{V}_1$.}
 \label{controllertrajectory}
\end{figure}

\smallskip 
\textit{(Simulation results):} \rfig{statetrajectoryallcontrol} plots the state trajectories for all the nodes by applying the event-triggered controller designed by the emulation-based approach proposed in \rsec{eventtrigdesignsec}, where the initial state $x(0)$ is randomly given from a uniform distribution on the interval set $[0,1]^n$. 
For comparisons, we illustrate in \rfig{statetrajectoryallnoinput} the state trajectories without control inputs (i.e., the state trajectories with $u_i(t) = v_{ij}(t) = 0$, for all $i \in {\cal V}$, $j \in {\cal N}^{\rm out} _i$, and $t \geq 0$), where the initial state is the same as the event-triggered controller. 
\rfig{statetrajectoryallcontrol} shows that the fraction of infected people are contained effectively by applying the proposed event-triggered controller compared with the control-free case in \rfig{statetrajectoryallnoinput}. 
To verify that the control objective is achieved, we illustrate in \rfig{statetrajectorycommunity} the  trajectories of ${|\mathcal{V}_m|}^{-1} \sum_{i\in \mathcal{V}_m} x_i (t)$ for all $m \in \{1, 2, 3\}$ under the proposed event-triggered controller (red solid line), without control inputs (green dotted line), the continuous-time controller \req{uitimecon}, \req{vijtimecon} (see \rsec{designcontrolgainsec}; blue dashed line) whose control gains are the same as the event-triggered controller. \rfig{statetrajectorycommunity} shows that, by applying the event-triggered controller, the average of the fraction of infected people in each group can be appropriately contained such that the control objective is achieved (i.e., converge below the prescribed thresholds $\bar{x}_1, \bar{x}_2$, and $\bar{x}_3$), while at the same time preserving almost the same convergence performance as the continuous-time controller. \textcolor{black}{It can be also seen that the event-triggered controller provides a better control performance than the continuous-time controller. To describe why this has happened, recall that the event-triggered controller updates the control inputs {only when they are needed}, while the continuous controller updates them {continuously}. Moreover, the linear state-feedback controller is employed under both the continuous controller and the event-triggered controller (see \req{ui}, \req{vij}).
Thus, when the state (the fraction of the infected people) is decreasing, the control input under the continuous controller is decreasing for all times, while the control input under the event-triggered controller is kept constant during the inter-event times and it decreases only for the update times. Therefore, if the inter-event times are relatively long, the control input under the event-triggered controller tend to be larger than the one under the continuous-time controller. Therefore, the event-triggered controller tends to mitigate the infection further than the continuous-time controller (since it tends to utilize larger control inputs) and thus provides a better performance.
Note that, when the state is \textit{increasing}, the reverse holds; the control inputs under the event-triggered controller tends to be smaller than the continuous-time controller and thus provide worse performance. In this numerical simulation, the event-triggered controller provides a better performance than the continuous time controller, since, as shown in \rfig{statetrajectoryall}, many states are \textit{decreasing} for most of the times.}


In \rfig{controllertrajectoryrecovery} and (b), we plot the trajectories of the control inputs $u_i(t)$, $v_{ij}(t)$ for $i \in {\cal V}_1$ under the proposed event-triggered controller. 
In addition, \rfig{controllertrajectorytriggering} illustrates the corresponding inter-event times, i.e., $t_{\ell + 1}^i - t_{\ell}^i$ for $\ell \in \mathbb{N}$ and $i \in {\cal V}_1$. 
From the figures, we can observe that the control inputs are not updated continuously but are updated aperiodically based on the proposed event-triggered controller. We can furthermore observe the inter-event times tend to be larger as time evolves and the state trajectories converge, which implies that the control inputs are updated {only when} the fraction of the infected people in each node increases or decreases by the threshold designed by the proposed approach. In summary, we can confirm the validity of the proposed event-triggered controller by showing that, the control inputs for each node are updated only when they are needed while preserving almost the same convergence performance as the continuous-time controller. 

\section{Conclusions and future works}
In this paper, we proposed an event-triggered control-based framework for a deterministic SIS model. 
In the proposed framework, control inputs for each subpopulation are updated only when the fraction of the infected people increases or decreases by a prescribed threshold, aiming at reducing unnecessarily burden of updating the control inputs. 
First, we analyzed the stability of the closed-loop system under the event-triggered controller. 
We in particular derived a sufficient condition for the event-triggered controller to achieve a prescribed control objective. 
We further showed that the derived conditions are characterized by convex programs, which can be thus efficiently solved in polynomial time. 
Then, we proposed an emulation-based approach towards the design of the event-triggered controller. We in particular showed that the problem of designing the control and the event-triggering gains can be solved by the geometric programmings, which can reduce to convex programs and can be efficiently solved in polynomial time. 
Finally, we confirmed the validity of our proposed approach through numerical simulations of an epidemic spreading using an air transportation network.  

The following aspects should be further pursued in our future works of research: 
\begin{itemize}
    \item \textcolor{black}{In the event-triggered control framework presented in this paper, the state for each node needs to be monitored continuously so as to determine the triggering time instants. Moreover, the inter-event times might still be very small especially when a huge outbreak happens. Hence, it is of great importance to provide some constraints on the \textit{lower bound} of the inter-event times, so as to further reduce the frequency of the control inputs. These issues could indeed be solved by employing the periodic event-triggered control \citep{heemels2013a}, in which the states are measured only periodically (not continuously as in the standard event-triggered control presented in this paper). Therefore, future work involves solving the above issues by formulating the periodic event-triggered control for mitigating the epidemic spreading.} 
    \item \textcolor{black}{In our current problem setup, we impose the constraints on the control \textit{gains} (see \req{constraintcontrolparameters}) in order to restrict the amount of the control inputs. However, in this formulation, the upper bound of the control input is utilized only when the state is $1$ (i.e., $u_i(t) = \bar{k}_i$ iff $x_i (t) =1$). Hence, it might be more reasonable and realistic to impose the constraints \textit{directly} on the control inputs, e.g., $u_i(t) \leq \bar{u}_i$, $v_{ij}(t) \leq \bar{v}_{ij}$ for all $t\geq 0$ rather than impose the constraints on the control gains. Hence, considering how to impose $u_i(t) \leq \bar{u}_i$, $v_{ij}(t) \leq \bar{v}_{ij}$ when designing the event-triggered controller for the epidemic spreading should be pursued in our future work.} 
    \item In addition to the above, there exist several research directions that should be further pursued in relation to the current COVID-19 pandemic. \textcolor{black}{For example, it is of both theoretical and practical interest to investigate the event-triggered control of more realistic epidemic models, such as the SIR models, the SEIR models, and various others (see, e.g., \citep{wang2014,Casella2021,DarabiSahneh2013,Giordano2020}).} Another important research direction is to develop a theoretical framework for the event-triggered control of epidemics over temporal networks \citep{ogura2016,pare2017epidemic} in order to account the intrinsic time-variability of contact networks in the human society. \textcolor{black}{Additionally, time delays arising in the feedback loop as well as parameter uncertainties in the epidemic models (see, e.g., \citep{Casella2021}) should be taken into account for further investigations in designing the event-triggered controller.} 
\end{itemize}


\section*{Acknowledgement}
This work was supported in part by JSPS KAKENHI Grant 21H01353 and in part by JST ERATO HASUO Metamathematics for Systems Design Project (No. JPMJER1603). 



\appendix 
\section{Proof of \rprop{gpproposition}}\label{proofprop}
To prove \rprop{gpproposition}, let us first provide the following lemma, which gives a sufficient condition for the control objective \req{objective} to be achieved under the continuous-time controller. 
\begin{mylem}\label{emulationlemma}
\normalfont
Consider the SIS model \req{controldynamics}, the continuous-time controller \req{uitimecon}, \req{vijtimecon}, and the control objective \req{objective}. Assume that the control gains  satisfying \req{constraintcontrolparameters} are chosen such that the following conditions are satisfied: 
\begin{align}
\tilde{\theta}^* _c \le p_m^*\bar{d}_m, \label{thetaeq}
\end{align}
for all $m \in \{1, \ldots, M\}$, where $\tilde{\theta}^* _c \in \mathbb{R}$ is defined according to the following optimization problem: 
\begin{align}
\tilde{\theta}^* _c = &\ \underset{x\in {\cal W}_c}{\rm max}\ {p}^\mathsf{T} {x} \label{optimizationcontinuous} 
\end{align}
where ${\cal W}_c =\{x\in \mathbb{R}^n: {x}^\mathsf{T}\tilde{S}_c x - (\tilde{r}_c + \epsilon \mathsf{1}_n)^\mathsf{T} x \leq 0 \}$ with $\epsilon>0$, $\tilde{S}_c = {\rm diag}(\tilde{s}_c)$ and $\tilde{s}_c \in \mathbb{R}^n$ is chosen such that the following inequality is satisfied:
\begin{align}\label{stilde}
0 < \tilde{s}_c^\mathsf{T} &\leq {p}^\mathsf{T}(K+L). 
\end{align}
Then, for any $x(0) \in [0, 1]^n$, the control objective \req{objective} is achieved by applying the continuous-time controller. \qedwhite 
\end{mylem}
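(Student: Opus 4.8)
The plan is to follow the proof of \rthm{mainresult} essentially line by line, specialized to the continuous-time controller \req{uitimecon}, \req{vijtimecon}. The crucial simplification is that for this controller the measurement errors $e_i(t)$ vanish identically, so the event-triggering gains never enter and the closed-loop estimate collapses to the one obtained from \req{xdoteq} upon deleting the $e_i$-terms (equivalently, upon setting $G=H=0$). As a preliminary step I would note that $[0,1]^n$ is invariant for the closed loop under \req{uitimecon}, \req{vijtimecon}: although \rprop{invariantprop} is stated for the event-triggered controller, its proof only uses that the applied inputs obey $u_i(t)\in[0,\bar{k}_i]$ and $v_{ij}(t)\in[0,\bar{\beta}_{ij}]$, and here $u_i(t)=k_ix_i(t)$, $v_{ij}(t)=l_{ij}x_i(t)$ with $k_i\le\bar{k}_i$, $l_{ij}\le\bar{\beta}_{ij}$, so these bounds hold whenever $x(t)\in[0,1]^n$; hence the argument that the vector field is tangent to or points into $[0,1]^n$ along its boundary carries over verbatim, and $x(0)\in[0,1]^n$ implies $x(t)\in[0,1]^n$ for all $t\ge0$.

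Next I would repeat the derivation from \req{eq23} to \req{xdoteq} with $e_i(t)\equiv0$, using invariance to bound $x_j^2(t)\le x_j(t)$ and the standing inequality $\bar{\beta}_{ji}\ge l_{ji}$ to discard the remaining nonpositive cross-terms, obtaining the element-wise inequality
\[
\dot x(t)\le(\bar{B}-\ubar{D})\,x(t)-(K+L)\tilde x(t),\qquad \tilde x(t)=[x_1^2(t),\dots,x_n^2(t)]^\mathsf{T}.
\]
Differentiating $V(x)=p^\mathsf{T}x$ along trajectories, and using $\tilde r_c^\mathsf{T}=p^\mathsf{T}(\bar{B}-\ubar{D})$ together with \req{stilde} and $\tilde x(t)\ge0$, this yields
\[
\frac{\mathrm{d}}{\mathrm{d}t}V(x(t))\le\tilde r_c^\mathsf{T}x(t)-p^\mathsf{T}(K+L)\tilde x(t)\le\tilde r_c^\mathsf{T}x(t)-x^\mathsf{T}(t)\,\tilde S_c\,x(t).
\]
Since $\tilde S_c={\rm diag}(\tilde s_c)$ is (diagonal and) positive definite, ${\cal W}_c$ is a bounded ellipsoid and $\tilde\theta^*_c$ in \req{optimizationcontinuous} is finite; moreover $x^\mathsf{T}\tilde S_c x-\tilde r_c^\mathsf{T}x\le0$ implies $p^\mathsf{T}x\le\tilde\theta^*_c$, whose contrapositive shows that $V(x(t))>\tilde\theta^*_c$ forces $\tfrac{\mathrm{d}}{\mathrm{d}t}V(x(t))<0$.

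To conclude I would argue exactly as in \rthm{mainresult}: on each compact set $\{x\in[0,1]^n:V(x)\ge\tilde\theta^*_c+\epsilon\}$ the continuous function $\tilde r_c^\mathsf{T}x-x^\mathsf{T}\tilde S_c x$ is strictly negative, hence bounded above by a negative constant, so $V(x(t))$ enters $\{V\le\tilde\theta^*_c+\epsilon\}$ in finite time; letting $\epsilon\downarrow0$ gives $\limsup_{t\to\infty}V(x(t))\le\tilde\theta^*_c$. Combining this with $p_m^*w_m^\mathsf{T}x(t)\le V(x(t))$ (which follows from \req{pmstar} and $x(t)\ge0$) and the hypothesis \req{thetaeq} yields $\limsup_{t\to\infty}w_m^\mathsf{T}x(t)\le\tilde\theta^*_c/p_m^*\le\bar{d}_m$ for every $m\in\{1,\dots,M\}$, i.e.\ \req{objective} holds. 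Every computation here is a lighter version of one already performed for \rthm{mainresult}, so I anticipate no genuine difficulty; the two points worth a word of care are the remark in the invariance step that \rprop{invariantprop} still applies even though the continuous-time gains need not satisfy \req{conditioneventtrig}, and — the step I expect to be the most delicate to phrase rigorously — the passage from the pointwise implication ``$V(x(t))>\tilde\theta^*_c$ forces $\dot V(x(t))<0$'' to the asymptotic bound on $\limsup_{t\to\infty}V(x(t))$, which is precisely where compactness of $[0,1]^n$ and continuity of the bounding function are invoked.
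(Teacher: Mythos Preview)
Your proposal is correct and follows essentially the same approach as the paper's proof: both specialize the estimates of \rthm{mainresult} to the continuous-time controller by setting the error terms (equivalently $G,H$) to zero, obtain $\dot V\le\tilde r_c^\mathsf{T}x-x^\mathsf{T}\tilde S_c x$ via \req{stilde}, and then repeat the contrapositive/limsup argument leading to \req{lyapunovconverge}--\req{lyapunovconverge2}. Your version is slightly more careful in two places the paper leaves implicit---the invariance of $[0,1]^n$ for the continuous-time controller and the compactness argument behind the $\limsup$ bound---but these are refinements of the same proof, not a different route.
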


\textit{(Proof of \rlem{emulationlemma}):} 
The proof is given based on \rthm{mainresult}. 
Applying the continuous-time controller \req{uitimecon}, \req{vijtimecon} is equivalent to setting the event-triggering gains as $\sigma_i \rightarrow 0$ and $\eta_i \rightarrow 0$ for all $i \in \mathcal{V}$ (i.e., $G\rightarrow 0$ and $H \rightarrow 0$). Hence, by setting $G \rightarrow 0,  H \rightarrow 0$ in \req{xdoteq}, the upper bound of $\dot{x} (t)$ under the continuous-time controller is computed as 
\begin{align}
\dot{x}(t)\le&\left (\bar{B}-\ubar{D}\right) {x}(t)-(K+L) \tilde{ {x}}(t). 
\end{align}
The derivative of the Lyapunov function $V( {x}) = p^\mathsf{T} x$ (under the continuous-time controller) is then given by 
\begin{align}
\cfrac{{\rm d}}{{\rm d}t} {V}( {x}(t)) &= p^\mathsf{T} \dot{x}(t)\leq p^\mathsf{T} \left (\bar{B}-\ubar{D}\right)x(t) - p^\mathsf{T}(K+L) \tilde{ {x}}(t) \\
&\leq \tilde{r}^\mathsf{T} _c {x}(t)- {x}^\mathsf{T}(t)\tilde{S}_c  {x}(t), \label{upperboundcontinuous}
\end{align}
where we used $\tilde{r}^\mathsf{T} _c = p^\mathsf{T} \left (\bar{B}-\ubar{D}\right)$ (see the statement after \req{rtildeci}) and \req{stilde} to obtain \req{upperboundcontinuous}. 

Now, similarly to \req{thetaprop}, 
it follows that ${x}^\mathsf{T}\tilde{S}_c {x}-(\tilde{r}_c + \epsilon \mathsf{1}_n)^\mathsf{T} {x} <0 \implies p^\mathsf{T} x < \tilde{\theta}^* _c$ for all $x \in \mathbb{R}^n$.
Hence, by taking the contrapositive, we obtain
\begin{equation}
V( {x}(t)) \geq \tilde{\theta}^* _c \implies \cfrac{{\rm d}}{{\rm d}t} {V}( {x}(t)) \leq \tilde{r}^\mathsf{T} _c {x}(t)- {x}^\mathsf{T}(t)\tilde{S}_c {x}(t) \leq - \epsilon \|x\|_1. \label{lyapunovdecreasecontinuous}
\end{equation}
 Thus, by following the same procedure as  \req{lyapunovconverge}--\req{lyapunovconverge2}, we can show that there exists $t' \geq 0$ such that ${w}_m^T {x}(t)\le \bar{d}_m$ for all $t \geq t'$ and $m \in \{1, \ldots, M\}$. Therefore, if \req{thetaeq} holds for all $m \in \{1, \ldots, M\}$, the control objective \req{objective} is achieved by applying the continuous-time controller \req{uitimecon}, \req{vijtimecon}. \qedwhite 

\smallskip


We now prove \rprop{gpproposition}. 

\smallskip
\textit{(Proof of \rprop{gpproposition})}: 
From \rlem{emulationlemma}, it is shown that the control objective \req{objective} is achieved by finding control gains $k_i, l_{ij}$, $i\in {\cal V}$, $j \in {\cal N}^{\rm out} _i$ and a vector $\tilde{s}_c = [\tilde{s}_{c, 1}, \ldots, \tilde{s}_{c, n}]$, such that the following conditions are all satisfied: \req{constraintcontrolparameters} for all $i\in {\cal V}$ and $j \in {\cal N}^{\rm out} _i$, \req{thetaeq} for all $m \in \{1, \ldots, M\}$ and \req{stilde}. 
Thus, the decision variables in the design problem here are: $k_i, l_{ij}$, $i\in {\cal V}$, $j \in {\cal N}^{\rm out} _i$ and $\tilde{s}_c= [\tilde{s}_{c, 1}, \ldots, \tilde{s}_{c, n}]$. Moreover, the constraints are:  \req{constraintcontrolparameters} for all $i\in {\cal V}$, $j \in {\cal N}^{\rm out} _i$, \req{thetaeq} for all $m \in \{1, \ldots, M\}$ and \req{stilde}. 
In what follows, we translate the conditions \req{constraintcontrolparameters}, \req{thetaeq}, and \req{stilde} into the \textit{posynomial constraints}, so that the control gains can be designed by solving a geometric programming problem. 
First, the constraint for the $i$-th element of $\tilde{s}_c$ (i.e., $\tilde{s}_{c, i}$) in \req{stilde} 
is given by 
\begin{align}
& \tilde{s}_{c, i} \leq p_i k_i + \sum_{j \in \mathcal{N}_i^{\rm out}} p_j l_{ij}. 
\label{stildeel}
\end{align}
 Summing $p_i \bar{k}_i + \sum_{j\in{\cal N}_i ^{\rm out}} p_j \bar{l}_{ij}$ in both sides of \req{stildeel}, we obtain 
\begin{align}\label{stildeeq}
\tilde{s}_{c, i} + p_i (\bar{k}_i - k_i) + \sum_{j \in \mathcal{N}_i^{\rm out}} p_j (\bar{l}_{ij} - l_{ij}) \leq c_{1,i}, 
\end{align}
where $c_{1,i}$ is the constant defined in \req{ctwoi}. 
Define the new decision variables $\widetilde{k}_i$, $\widetilde{l}_{ij} >0$ for all $j\in\mathcal{N}_i^{\rm out}$ by 
\begin{align}
\tilde{k}_i = \bar{k}_i - k_i,\ \ \tilde{l}_{ij} = \bar{l}_{ij} - l_{ij}. 
\end{align}
Then, the constraint \req{stildeeq} results in \req{gpthree}, which is the posynomial constraint (with respect to the variables $\tilde{k}_i$, $\tilde{l}_{ij}$, $\tilde{s}_{c, i}$). With the new decision variables $\tilde{k}_i$, $\tilde{l}_{ij}$ ($j\in\mathcal{N}_i^{\rm out}$), the constraint \req{constraintcontrolparameters} is then rewritten as $\tilde{k}_i < \bar{k}_i$ and $\tilde{l}_{ij} < \bar{l}_{ij}$ for all $j\in\mathcal{N}_i^{\rm out}$. This leads to \req{gpfour}, \req{gpfive}, which are the posynomial constraints. 
Let us now translate \req{thetaeq} into the posynomial constraint. 
First, observe that 
\begin{align}\label{ellipsoidcdef}
{{\mathcal{W}}_c} &=\{x\in \mathbb{R}^n: x^\mathsf{T}\tilde{S}_c x-(\tilde{r}_c + \epsilon \mathsf{1}_n)^\mathsf{T}x \le 0\} \notag \\ 
                     &=\{x\in \mathbb{R}^n: (x - {g}_c )^\mathsf{T}\tilde{S}_c (x - {g}_c )  \le {\rho}_c^2\}, 
\end{align}
where 
\begin{align}
{g}_c  = \left [\frac{\tilde{r}_{c,1}+\epsilon}{2\tilde{s}_{c,1}},\ldots, \frac{\tilde{r}_{c,n}+\epsilon}{2\tilde{s}_{c,n}}\right]^\mathsf{T}, \ \  
{\rho}_c = \sqrt{\sum_{i\in \mathcal{V}} \frac{(\tilde{r}_{c,i}+\epsilon)^2}{4{\tilde{s}_{c,i}}}}.
\end{align} 
Hence, we obtain 
\begin{align}\label{ellipsoidcdefaffine}
{\mathcal{W}}_c & = \left \{x\in \mathbb{R}^n: \left \|{{\tilde{S}^{1/2}_c}}(x - {g}_c )/{\rho}_c \right \|  \le 1\right \}, \notag \\ 
& = \left \{ {F}_c v + {g}_c  \in \mathbb{R}^n: \|v\| \le 1\right \}, 
\end{align}
where $\tilde{S}^{1/2}_c = {\rm diag}\left (\left[\sqrt{\tilde{s}_{c,1}}, \sqrt{\tilde{s}_{c,2}}, \ldots, \sqrt{\tilde{s}_{c,n}}\ \right]\right )$ and ${F}_c= {\rho}_c \tilde{S}^{-1/2}_c$. 
Thus, the optimization problem \req{optimizationcontinuous} can be analytically solved as: 
\begin{align}
\tilde{\theta}^* _c &= \max_{x \in {{\mathcal{W}}_c}} p^\mathsf{T} x = \max_{\|v\|\leq 1} p^\mathsf{T} ({F}_c v + {g}_c ) =\|p^\mathsf{T} {F}_c\| + p^\mathsf{T}{{g}_c}  \notag\\ 
& = \frac{1}{2}\sqrt{\sum_{i \in \mathcal{V}}\frac{p_i^2}{\tilde{s}_{c,i}}}\sqrt{\sum_{i\in \mathcal{V}}\frac{(\tilde{r}_{c,i}+\epsilon)^2}{\tilde{s}_{c,i}}} + \frac{1}{2}\sum_{i\in \mathcal{V}} \frac{p_i(\tilde{r}_{c,i}+\epsilon)}{\tilde{s}_{c,i}}. \label{thetastaranalytic}
\end{align}

Therefore, the constraint \req{thetaeq} is given by 
\begin{align}\label{gaincalconst}
  &\sqrt{\sum_{i \in \mathcal{V}}\frac{p_i^2}{\tilde{s}_{c,i}}}\sqrt{\sum_{i\in \mathcal{V}}\frac{(\tilde{r}_{c,i}+\epsilon)^2}{\tilde{s}_{c,i}}} + \sum_{i\in \mathcal{V}} \frac{p_i(\tilde{r}_{c,i}+\epsilon)}{\tilde{s}_{c,i}} \le 2p_m^*\bar{d}_m \Longleftrightarrow \notag \\ 
& \sqrt{\sum_{i \in \mathcal{V}}\frac{p_i^2}{\tilde{s}_{c,i}}}\sqrt{\sum_{i\in \mathcal{V}}\frac{(\tilde{r}_{c,i}+\epsilon)^2}{\tilde{s}_{c,i}}} + \sum_{i\in\mathcal{C}}\frac{p_i\tilde{r}_{c,i}}{\tilde{s}_{c,i}} +\sum_{i\in \mathcal{V}} \frac{\epsilon p_i}{\tilde{s}_{c,i}} \le 2p_m^*\bar{d}_m + \sum_{i\notin\mathcal{C}}\frac{-p_i\tilde{r}_{c,i}}{\tilde{s}_{c,i}},
\end{align}
where the set ${\cal C}$ is defined in \req{positivec}. Note that \req{gaincalconst} is not yet a posynomial constraint, since the term $\sum_{i\notin\mathcal{C}}\frac{-p_i\tilde{r}_{c,i}}{\tilde{s}_{c,i}}$ in the right hand side involves the variable $\tilde{s}_{c,i}$. 
In order to convert \req{gaincalconst} into the posynomial constraint, notice that, from \req{stildeeq}, we have   $\tilde{s}_{c,i}\le c_{1,i}$ for all $i\in\mathcal{V}$.
Hence, we obtain $\frac{-p_i\tilde{r}_{c,i}}{c_{1,i}} \leq \frac{-p_i\tilde{r}_{c,i}}{\tilde{s}_{c,i}}$ for all $i \notin {\cal C}$. 
Thus, a sufficient condition to satisfy \req{gaincalconst} is given by 
\begin{align}
\sqrt{\sum_{i \in \mathcal{V}}\frac{p_i^2}{\tilde{s}_{c,i}}}\sqrt{\sum_{i\in \mathcal{V}}\frac{(\tilde{r}_{c,i}+\epsilon)^2}{\tilde{s}_{c,i}}} + \sum_{i\in\mathcal{C}}\frac{p_i\tilde{r}_{c,i}}{\tilde{s}_{c,i}} + \sum_{i\in \mathcal{V}} \frac{\epsilon p_i}{\tilde{s}_{c,i}} 
&\le 2p_m^*\bar{d}_m +\sum_{i\notin\mathcal{C}}\frac{-p_i\tilde{r}_{c,i}}{c_{1,i}} \notag \\ 
&\ \ \ \ = c_{2, m}, \label{gaincalconst3} 
\end{align}
for all $m \in \{1, \ldots, M\}$.
Define a new variable ${\xi}_{c} >0$, such that the following constraint is satisfied: 
\begin{align}
\left({\sum_{i \in \mathcal{V}}\frac{p_i^2}{\tilde{s}_{c,i}}}\right )\left({\sum_{i\in \mathcal{V}}\frac{(\tilde{r}_{c,i}+\epsilon)^2}{\tilde{s}_{c,i}}}\right )\leq  {\xi}_{c}. \label{xieq}
\end{align}
Thus, we have
\begin{align}
{\xi}^{\frac{1}{2}} _c + \sum_{i\in\mathcal{C}}\frac{p_i (\tilde{r}_{c,i}+\epsilon)}{\tilde{s}_{c,i}} \le c_{2, m},  \label{gaincalconst4}
\end{align}
which leads to the posynomial constraints \req{gpone}, \req{gptwo}. 

In summary, the control objective is achieved by finding $\tilde{k}_i, \tilde{l}_{ij} >0$, $i \in \mathcal{V}$, $j \in {\cal N}^{\rm out} _i$, $\tilde{s}_c= [\tilde{s}_{c, 1}, \ldots, \tilde{s}_{c, n}] >0$ and $\epsilon_1, \epsilon_2, \epsilon_3, {\xi}_{c}>0$, such that the posynomial constraints \req{gpthree}--\req{gptwo} are satisfied. Since $\tilde{k}_i = \bar{k}_i - k_i$, $\tilde{l}_{ij} = \bar{l}_{ij} - l_{ij}$, the (optimal) control parameters are given by \req{kilij} after solving the geometric programming problem shown in \rprop{gpproposition}. 


\section{Proof of \rprop{gppropositione}} \label{appproof}
Before proving \rprop{gppropositione}, let us show that the following lemma holds as a slight modification of \rthm{mainresult}: 
\begin{mylem}\label{emulationevlemma}
\normalfont
Consider the SIS model \req{controldynamics}, the event-triggered controller \req{ui}, \req{vij}, and the control objective \req{objective}. Assume that the control and the event-triggering gains satisfying \req{conditioneventtrig} and \req{constraintcontrolparameters} 
are chosen such that the following conditions are satisfied: 
\begin{equation}
\tilde{\theta}^* _e \le p_m^*\bar{d}_m,  
\label{thetastarlemma}
\end{equation}
for all $m \in \{1, \ldots, M\}$, where $\tilde{\theta}^* _e \in \mathbb{R}$ is defined according to the following optimization problem: 
\begin{align}
\tilde{\theta}^* _e = &\ \underset{x\in {\cal W}_e}{\rm max}\ {p}^\mathsf{T} {x} \label{optimizationeventtrig} 
\end{align}
where ${\cal W}_e =\{x\in \mathbb{R}^n: {x}^\mathsf{T}\tilde{S}_e x - (\tilde{r}_e + \epsilon \mathsf{1}_n)^\mathsf{T} x \leq 0 \}$ with $\epsilon>0$, $\tilde{S}_e = {\rm diag}(\tilde{s}_e)$, and the vectors $\tilde{r}_e, \tilde{s}_e \in \mathbb{R}^n$ are defined such that the following inequalities satisfied:
\begin{align}
&0 < \tilde{s}_e^\mathsf{T} \leq {p}^\mathsf{T}(K+L)(I_n-G) \label{setilde}\\
&\tilde{r}^\mathsf{T}_e  \geq {p}^\mathsf{T}\{\bar{B}-\ubar{D}+(K+L)H\}. \label{retilde}
\end{align}

Then, for any $x(0) \in [0, 1]^n$, the control objective \req{objective} is achieved by applying the event-triggered controller. \qedwhite 
\end{mylem}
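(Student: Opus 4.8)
The plan is to replay the proof of \rthm{mainresult} almost verbatim, substituting $(\tilde{S}_e,\tilde{r}_e,\tilde{\theta}^*_e,{\cal W}_e)$ for $(Q,r,\theta^*,{\cal W})$ and using the nonnegativity of the state guaranteed by \rprop{invariantprop} to justify the extra relaxations permitted by \req{setilde} and \req{retilde}. As a starting point I would invoke the differential inequality \req{xdoteq} derived in the proof of \rthm{mainresult},
\[
\dot{x}(t)\le\left\{\bar{B}-\ubar{D}+(K+L)H\right\}x(t)-(K+L)(I_n-G)\tilde{x}(t)-\tfrac{1}{2}X(t)L(I_n-G)(G+H)x(t),
\]
where $\tilde{x}(t)=[x_1^2(t),\dots,x_n^2(t)]^\mathsf{T}$ and $X(t)={\rm diag}(x(t))$; this inequality remains valid here since its derivation uses only the event-triggered controller \req{ui}, \req{vij}, the triggering condition \req{evcond}, and the invariance $x(t)\in[0,1]^n$, none of which depends on $\tilde{s}_e$ or $\tilde{r}_e$.

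I would then differentiate $V(x)=p^\mathsf{T}x$ along trajectories and relax the resulting bound term by term. Writing $P={\rm diag}(p)$, the matrices $P,L,I_n-G,G+H$ are entrywise nonnegative and $x(t)\ge 0$, so $-\tfrac{1}{2}x(t)^\mathsf{T}PL(I_n-G)(G+H)x(t)\le 0$ and this term is simply dropped; next, $p^\mathsf{T}\{\bar{B}-\ubar{D}+(K+L)H\}x(t)\le\tilde{r}_e^\mathsf{T}x(t)$ by \req{retilde} together with $x(t)\ge 0$; and $p^\mathsf{T}(K+L)(I_n-G)\tilde{x}(t)\ge\tilde{s}_e^\mathsf{T}\tilde{x}(t)=x(t)^\mathsf{T}\tilde{S}_ex(t)$ by \req{setilde} together with $\tilde{x}(t)\ge 0$. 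Since the latter term enters with a minus sign, combining the three estimates gives $\tfrac{{\rm d}}{{\rm d}t}V(x(t))=p^\mathsf{T}\dot{x}(t)\le\tilde{r}_e^\mathsf{T}x(t)-x(t)^\mathsf{T}\tilde{S}_ex(t)$ for all $t\ge 0$.

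The remainder follows the proof of \rthm{mainresult} unchanged: from the definition of $\tilde{\theta}^*_e$ in \req{optimizationeventtrig} one has $x^\mathsf{T}\tilde{S}_ex-\tilde{r}_e^\mathsf{T}x\le 0\implies p^\mathsf{T}x\le\tilde{\theta}^*_e$, whose contrapositive gives $V(x(t))>\tilde{\theta}^*_e\implies\tfrac{{\rm d}}{{\rm d}t}V(x(t))<0$, hence $\limsup_{t\to\infty}V(x(t))\le\tilde{\theta}^*_e$ for every $x(0)\in[0,1]^n$; then, reusing \req{eq30}--\req{lyapunovconverge2}, the bound $p_m^*w_m^\mathsf{T}x(t)\le V(x(t))$ together with \req{thetastarlemma} yields $\limsup_{t\to\infty}w_m^\mathsf{T}x(t)\le\bar{d}_m$ for all $m\in\{1,\dots,M\}$, which is the control objective \req{objective}.

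I do not anticipate a genuine obstacle; the only thing needing care is the direction of each relaxation, and everything lines up because every matrix entry involved and every state component on the invariant set $[0,1]^n$ is nonnegative, so \req{setilde}, \req{retilde}, and the removal of the $\tfrac{1}{2}PL(I_n-G)(G+H)$ term all loosen the upper bound on $\dot V$ in the same direction. One point worth stating explicitly in the write-up is that \req{setilde} forces $\tilde{S}_e\succ 0$, so ${\cal W}_e$ is a bounded ellipsoid containing the origin and $\tilde{\theta}^*_e$ is finite and attained, keeping \req{optimizationeventtrig} well posed.
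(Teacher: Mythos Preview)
Your proof is correct and very close in spirit to the paper's, but the execution differs in one notable way. The paper argues at the level of the constraint sets: it shows the chain
\[
x^\mathsf{T}Qx-r^\mathsf{T}x \;\ge\; x^\mathsf{T}Sx-r^\mathsf{T}x \;\ge\; x^\mathsf{T}\tilde{S}_e x-\tilde{r}_e^\mathsf{T}x,
\]
deduces ${\cal W}\subseteq{\cal W}_e$ and hence $\theta^*\le\tilde{\theta}^*_e$, and then invokes \rthm{mainresult} as a black box. You instead replay the Lyapunov argument directly on the trajectory, inserting the relaxations \req{setilde}, \req{retilde}, and the removal of the $\tfrac{1}{2}PL(I_n-G)(G+H)$ term into the bound \req{eq27} itself. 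The advantage of your route is that every relaxation is justified pointwise by $x(t)\ge 0$ on the invariant set $[0,1]^n$, so the sign checks are transparent; the paper's route is shorter and more modular since it reuses \rthm{mainresult} wholesale. Your remark that $\tilde{S}_e\succ 0$ makes ${\cal W}_e$ a bounded ellipsoid and $\tilde{\theta}^*_e$ finite is a useful well-posedness observation that the paper leaves implicit.
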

As shown in the proof below, 
\rlem{emulationevlemma} is a 
more conservative result than \rthm{mainresult}, in the sense that it provides \textit{sufficient conditions} to the ones provided in \rthm{mainresult}. However, \rlem{emulationevlemma} is useful for translating the conditions required to achieve the control objective into the posynomial constraints (see the proof of \rprop{gppropositione} below). 

\textit{(Proof of \rlem{emulationevlemma}):} 
From \rthm{mainresult}, it is shown that the control objective \req{objective} is achieved by finding the the control and the event-triggering gains, such that \req{thetastar} is satisfied for all $m \in \{1, \ldots, M\}$. From \req{svec}, \req{rvec}, and \req{shat}, it follows that 
\begin{align}\label{constrelax}
&{x}^\mathsf{T}Qx - r^\mathsf{T} x \geq {x}^\mathsf{T} S x - r^\mathsf{T} x \nonumber\\
&= {x}^\mathsf{T}{\rm diag}(p^\mathsf{T}(K + L)(I_n -G))x - p^\mathsf{T}\{\bar{B} - \ubar{D} + (K + L) H\} x \nonumber\\ 
&\geq x^\mathsf{T} \tilde{S}_e x - \tilde{r}_e^\mathsf{T} x, 
\end{align}
where we used \req{setilde} and \req{retilde} to obtain \req{constrelax}.
Hence, it follows that $x^\mathsf{T} \tilde{S}_e x - (\tilde{r}_e + \epsilon \mathsf{1}_n)^\mathsf{T} x >0 \implies {x}^\mathsf{T}Qx - (r+ \epsilon \mathsf{1}_n)^\mathsf{T} x >0$ for all $x \in \mathbb{R}^n$. 
Therefore, letting ${\cal W}_e =\{x\in \mathbb{R}^n: x^\mathsf{T}\tilde{S}_e x-(\tilde{r}_e+ \epsilon \mathsf{1}_n)^\mathsf{T}x \le 0\} $, we have ${\cal W} \subseteq {\cal W}_e$ (recall that ${\cal W}$ is defined in \req{calw}). Hence, $\theta^*$ in \req{optimization} satisfies 
\begin{align}
\theta^* = \underset{x \in {\cal W}}{\rm max}\ p^\mathsf{T} x &\leq \underset{x \in {\cal W}_e}{\rm max}\ p^\mathsf{T} x = \tilde{\theta}^* _e. 
\end{align}
Thus, $\tilde{\theta}^* _e \leq p^* _m \bar{d}_m$ for all $m\in \{1, \ldots, M\}$ implies $\theta^* \leq p^* _m \bar{d}_m$ for all $m\in \{1, \ldots, M\}$. 
Therefore, if the control and the event-triggering gains are chosen such that \req{optimizationeventtrig} is satisfied for all $m \in \{1, \ldots, M\}$, the control objective \req{objective} is achieved by applying the event-triggered controller. \qedwhite 

Let us now prove \rprop{gppropositione}.

\smallskip
\textit{(Proof of \rprop{gppropositione}):} 
Fix the control gains  by $k_i = k^* _i$, $l_{ij} = l^* _{ij}$ for all $i \in \mathcal{V}$ and $j \in {\cal N}_i^{\rm out}$. 
From \rlem{emulationevlemma}, it is shown that the control objective \req{objective} is achieved by finding event-triggering gains $\sigma_i$, $\eta_i$, and vectors $\tilde{r}_e= [\tilde{r}_{e, 1}, \ldots, \tilde{r}_{e, n}]$, $\tilde{s}_e= [\tilde{s}_{e, 1}, \ldots, \tilde{s}_{e, n}]$, such that the following conditions are all satisfied: \req{conditioneventtrig} for all $i \in {\cal V}$, \req{thetastarlemma} for all $m \in \{1, \ldots, M\}$, \req{setilde} and \req{retilde}. 
Thus, the decision variables in the design problem here are:  $\sigma_i$, $\eta_i$ for all $i \in {\cal V}$, $\tilde{r}_e= [\tilde{r}_{e, 1}, \ldots, \tilde{r}_{e, n}]$ and $\tilde{s}_e= [\tilde{s}_{e, 1}, \ldots, \tilde{s}_{e, n}]$. Moreover, the constraints are: \req{conditioneventtrig} for all $i \in {\cal V}$, \req{thetastarlemma} for all $m \in \{1, \ldots, M\}$, \req{setilde} and \req{retilde}. In what follows, we translate the conditions \req{conditioneventtrig}, \req{thetastarlemma}, \req{setilde}, and \req{retilde} into the \textit{posynomial constraints}, so that the event-triggering gains can be designed by solving a geometric programming problem. First, the constraint for the $i$-th element of $\tilde{s}_e$ (i.e., $\tilde{s}_{e, i}$) in \req{setilde} is given by  \begin{align}\label{setildeel}
\tilde{s}_{e, i} \leq (p_i k_i^* + \sum_{j \in \mathcal{N}_i^{\rm out}} p_j l_{ij}^*)(1 - \sigma_ i) = c_{3, i} (1 - \sigma _i), 
\end{align}
where $c_{3, i} >0$ is the constant defined in \req{const3}. Thus, defining the new variables by $\tilde{\sigma}_i = 1- \sigma_i$ for all $i \in {\cal V}$, we obtain the following constraint: 
\begin{align}
{\tilde{s}_{e, i}}{\tilde{\sigma}^{-1}_i}\leq  c_{3, i}, 
\end{align}
which yields the posynomial constraint \req{propositionconstone}. 
Moreover, the constraint for the $i$-th element of $\tilde{r}_e$ (i.e., $\tilde{r}_{e,i}$) in \req{retilde} is given by 
\begin{align}\label{retildeel}
\tilde{r}_{e,i} \ge \sum_{j\in \mathcal{N}^{\rm out}_i} p_j\bar \beta_{ij}-p_i\ubar{\delta}_i + (p_i k_i^* + \sum_{j\in \mathcal{N}^{\rm out}_i} p_j l_{ij}^*)\eta_i = \tilde{r}_{c, i} + c_{3, i}\eta_i.
\end{align}
where $\tilde{r}_{c, i}$ is the constant defined in \req{rtildeci}. If $i \in {\cal C}$, we have $\tilde{r}_{c, i} \geq 0$ and so $\tilde{r}_{e,i} \ge \tilde{r}_{c, i} + c_{3, i}\eta_i >0$. This implies that \req{retildeel} is the posynomial constraint. 
Thus, we obtain $\tilde{r}_{c, i} + c_{3, i}\eta_i \leq \tilde{r}_{e,i}$ for all $i \in {\cal C}$, which yields the posynomial constraint \req{propositionconstone}. 
If $i \notin {\cal C}$, we have $\tilde{r}_{c, i} + c_{3, i}\eta_i = 0$, since $\eta_i = \frac{{-\tilde{r}_{c,i}}}{c_{3, i}}$ (see \req{etanotinc}). Note that we have $\eta_i = \frac{{-\tilde{r}_{c,i}}}{c_{3, i}} \in (0, 1)$ from \ras{assumption}. Thus, the constraint \req{retildeel} for all $i \notin {\cal C}$ is trivially given by $\tilde{r}_{e,i} \geq 0$, $i \notin {\cal C}$. 

Using the new decision variable $\tilde{\sigma}_i$, the constraint of $\sigma_i$ (i.e., $\sigma_i \in (0, 1)$) is rewritten as $\tilde{\sigma}_i \in (0, 1)$. This leads to the posynomial constraint \req{propositionconstthree}. 
The constraint for $\eta_i$ is given by \req{propositionconstfour}, which follows trivially.

Let us now translate \req{thetastarlemma} into a posynomial constraint. As with \req{thetastaranalytic}, the optimization \req{optimizationeventtrig} is analytically solved as 

\begin{align}
\tilde{\theta}^* _e & = \underset{x \in {\cal W}_e}{\rm max}\ p^\mathsf{T} x = \max_{\|v\|\leq 1} p^\mathsf{T} ({F}_e v + {g}_e ) =\|p^\mathsf{T} {F}_e \| + p^\mathsf{T}{{g}_e}  \notag\\ 
& = \frac{1}{2}\sqrt{\sum_{i \in \mathcal{V}}\frac{p_i^2}{\tilde{s}_{e,i}}}\sqrt{\sum_{i\in \mathcal{V}}\frac{(\tilde{r}_{e,i}+\epsilon)^2}{\tilde{s}_{e,i}}} + \frac{1}{2}\sum_{i\in \mathcal{V}} \frac{p_i(\tilde{r}_{e,i}+\epsilon)}{\tilde{s}_{e,i}}, 
\end{align}
where ${F}_e= {\rho}_e \tilde{S}^{-1/2}_e$ with ${\rho}_e = \sqrt{\sum_{i\in \mathcal{V}} \frac{(\tilde{r}+\epsilon)^2_{e,i}}{4{\tilde{s}_{e,i}}}}$ and ${g}_e  = \left [\frac{\tilde{r}_{e,1}+\epsilon}{2\tilde{s}_{e,1}},\ldots, \frac{\tilde{r}_{e,n}+\epsilon}{2\tilde{s}_{e,n}}\right]^\mathsf{T}$. Hence, the constraint \req{thetastarlemma} is given by 
\begin{align}\label{constraintpd}
\frac{1}{2}\sqrt{\sum_{i \in \mathcal{V}}\frac{p_i^2}{\tilde{s}_{e,i}}}\sqrt{\sum_{i\in \mathcal{V}}\frac{(\tilde{r}_{e,i}+\epsilon)^2}{\tilde{s}_{e,i}}} + \frac{1}{2}\sum_{i\in \mathcal{V}} \frac{p_i(\tilde{r}_{e,i}+\epsilon)}{\tilde{s}_{e,i}} \leq p_m^*\bar{d}_m
\end{align}
Define the new variable ${\xi}_{e} >0$ such that the following constraint is satisfied: 
\begin{align}
\left({\sum_{i\in {\cal V}}\frac{p_i^2}{\tilde{s}_{e,i}}}\right )\left(\sum_{i\in {\cal V}}\frac{(\tilde{r}_{e,i}+\epsilon)^2}{\tilde{s}_{e,i}}\right )\leq  {\xi}_{e}. \label{xiconst2}
\end{align}
Then, the constraint \req{constraintpd} becomes
 \begin{align}\label{constraintpd3}
{\xi}^{\frac{1}{2}}_{e} + \sum_{i\in {\cal V}} \frac{p_i(\tilde{r}_{e,i}+\epsilon)}{\tilde{s}_{e,i}} \leq 2 p_m^*\bar{d}_m, 
\end{align}
which leads to the posynomial constraints \req{propositionconstfour},  \req{propositionconstfive}. 

In summary, the control objective is achieved by finding $\tilde{\sigma}_i$, $\tilde{s}_{e,i}$, $\eta_i$,  $\tilde{r}_{e,i} >0$ for all $i \in \mathcal{V}$ and $\epsilon_1, \epsilon_2, \epsilon_3, {\xi}_{e} >0$, such that \req{propositionconstone}-\req{propositionconstfive} are satisfied. Since $\tilde{\sigma}_i = 1- \sigma_i$ for all $i \in {\cal V}$, the (optimal) event-triggering gains are given by \req{sigmai} and \req{etanotinc} after solving the geometric programming problem in \rprop{gppropositione}. \qedwhite

\section{Proof of \rprop{feasibilityresult}}
Let $p^{(1)} = [p^{(1)}_{1}, p^{(1)}_{2},\ldots, p^{(1)}_n]$ and $p^{(2)} = [p^{(2)}_{1}, p^{(2)}_{2},\ldots, p^{(2)}_n]$ denote the solution to \req{designlyapunov} with $c_p= \gamma^{(1)}$ and $c_p = \gamma^{(2)}$, respectively, where $\gamma^{(1)}, \gamma^{(2)} >0$ are any positive constants. 
From \req{designlyapunov}, it can be easily shown that $p^{(2)} = \gamma p^{(1)}$, where $\gamma = \frac{\gamma^{(2)}}{\gamma^{(1)}}$. 
Here, we will only prove for the geometric program in Proposition~2 (i.e., the feasibility of the geometric program in Proposition~2 with $p^{(1)}$ implies the one with $p^{(2)}$ and vice versa), since the proof for Proposition~3 is given in the same way with Proposition~2. 
Let $\tilde{k}^{(1)} _i, \tilde{l}^{(1)} _{ij}, \tilde{s}^{(1)} _{c,i} >0$ for all $i \in \mathcal{V}$, $j \in {\cal N}^{\mathrm{out}} _i$ and $\epsilon^{(1)}_1, \epsilon^{(1)}_2, \epsilon^{(1)}_3, {\xi}^{(1)} _{c}>0$ be an \textit{any} feasible solution that satisfies the posynomial constraints in Proposition~2 with $p^{(1)}$. In other words, we have
\begin{align}
  &\tilde{s}^{(1)}_{c,i}+ p^{(1)}_i\tilde{k}^{(1)}_i + \sum_{j\in \mathcal{N}^{out}_i}p^{(1)}_j\tilde{l}^{(1)}_{ij}\le c^{(1)}_{1,i},\ \forall i \in \mathcal{V}\label{one}\\
  &\tilde{k}^{(1)}_i + \epsilon^{(1)} _1 \leq \bar{k}^{(1)}_i,\ \forall i \in \mathcal{V} \label{two} \\ 
  &\tilde{l}^{(1)}_{ij} + \epsilon^{(1)} _2 \leq \bar{l}^{(1)}_{ij},\ \forall i \in \mathcal{V}, \forall j \in {\cal N}^{out} _i, \label{three}\\
  &({\xi}^{(1)}_c) ^{\frac{1}{2}}+\sum_{i\in \mathcal{C}}\frac{p^{(1)}_i(\tilde{r}^{(1)}_{c,i}+\epsilon^{(1)}_3)}{\tilde{s}_{c,i}}\le c^{(1)}_{2,m},\ \forall m\in \{1,\ldots, M\}\label{four}\\
  &\left(\sum_{i \in \mathcal{V}}\frac{(p^{(1)} _i)^2}{\tilde{s}^{(1)}_{c,i}}\right)\left(\sum_{i \in \mathcal{V}}\frac{(\tilde{r}^{(1)}_{c,i}+\epsilon^{(1)}_3)^{2}}{\tilde{s}^{(1)}_{c,i}}\right) \le {\xi}^{(1)}_c, \label{five}
\end{align}
where 
\begin{align}
&c^{(1)}_{1,i}= p^{(1)}_i\bar{k}_i+\sum_{j\in \mathcal{N}^{out}_i}p^{(1)}_j\bar{l}_{ij}, \ \ \tilde{r}^{(1)}_{c, i} = \sum_{j\in \mathcal{N}^{out}_i} p^{(1)} _j\overline{\beta}_{ij}-p^{(1)} _i\underline{\delta}_i, \notag \\ 
&c^{(1)}_{2,m}= 2p^{*(1)}_m \bar{d}_m - \sum_{i\notin \mathcal{C}}\cfrac{p^{(1)}_i \tilde{r}^{(1)}_{c, i}}{c^{(1)}_{1,i}}
\end{align}
 with $p_m^{*(1)}=\min_{i\in \mathrm{supp}( {w}_m)} p^{(1)}_i$.
Multiplying $\gamma$ in both sides of \req{one}, we obtain
\begin{align*}
    \gamma \tilde{s}^{(1)}_{c,i}+ \gamma p^{(1)}_i\tilde{k}^{(1)} _i + \sum_{j\in \mathcal{N}^{out}_i}\gamma p^{(1)}_j\tilde{l}^{(1)}_{ij} &=\gamma \tilde{s}^{(1)}_{c,i} + p^{(2)}_i\tilde{k}^{(1)}_i + \sum_{j\in \mathcal{N}^{out}_i} p^{(2)}_j\tilde{l}^{(1)}_{ij} \\ 
   & \le \gamma c^{(1)}_{1,i}. 
\end{align*}
Letting $c^{(2)}_{1,i} = p^{(2)}_i\bar{k}_i+\sum_{j\in \mathcal{N}^{out}_i}p^{(2)}_j\bar{l}_{ij}$, we obtain 
\begin{align}
    \gamma \tilde{s}^{(1)}_{c,i} + p^{(2)}_i\tilde{k}^{(1)}_i + \sum_{j\in \mathcal{N}^{out}_i} p^{(2)}_j\tilde{l}^{(1)}_{ij} \leq c^{(2)}_{1,i}, \label{six}
\end{align}
where we used $\gamma c^{(1)}_{1,i} = c^{(2)}_{1,i}$. 
 Moreover, by multiplying $\gamma$ in both sides of \req{four}, it follows that 
\begin{align*}
\gamma ({\xi}^{(1)}_c) ^{\frac{1}{2}}+\sum_{i\in \mathcal{C}}{\gamma p^{(1)}_i(\gamma \tilde{r}^{(1)}_{c,i}+\gamma\epsilon^{(1)}_3)}\tilde{s}^{-1}_{c,i}\gamma^{-1} &= \gamma ({\xi}^{(1)}_c) ^{\frac{1}{2}}+\sum_{i\in \mathcal{C}}{ p^{(2)}_i( \tilde{r}^{(2)}_{c,i}+\gamma\epsilon^{(1)}_3)}\tilde{s}^{-1}_{c,i}\gamma^{-1} \notag \\ 
&\le \gamma c^{(1)}_{2,m}. 
\end{align*}
Letting $c^{(2)}_{2,m}= 2p^{*(2)}_m \bar{d}_m - \sum_{i\notin \mathcal{C}}\frac{p^{(2)}_i \tilde{r}^{(2)}_{c, i}}{c^{(2)}_{1,i}}$ with $p_m^{*(2)}=\min_{i\in \mathrm{supp}( {w}_m)} p^{(2)}_i$, we obtain
\begin{align}
    \gamma ({\xi}^{(1)}_c) ^{\frac{1}{2}}+\sum_{i\in \mathcal{C}}{ p^{(2)}_i( \tilde{r}^{(2)}_{c,i}+\gamma\epsilon^{(1)}_3)} (\gamma \tilde{s}^{(1)}_{c,i})^{-1} \leq c^{(2)}_{2,m},  \label{seven}
\end{align}
where we used $\gamma c^{(1)}_{2,m} = c^{(2)}_{2,m}$. 
 In addition, by multiplying $\gamma^2$ in both sides of \req{five} in the above, we obtain 
\begin{align}
    \left(\sum_{i \in \mathcal{V}}\frac{(p^{(2)} _i)^2}{\gamma \tilde{s}^{(1)}_{c,i}}\right)\left(\sum_{i \in \mathcal{V}}\frac{(\tilde{r}^{(2)}_{c,i}+\gamma\epsilon^{(1)}_3)^{2}}{\gamma \tilde{s}^{(1)}_{c,i}}\right) \le  \gamma^2 {\xi}^{(1)}_c \label{eight}
\end{align}

Now, consider the feasibility problem of the posynomial constraints \textit{with $p^{(2)}$}: find $\tilde{k}_i, \tilde{l}_{ij}, \tilde{s}_{c,i} >0$ for all $i \in \mathcal{V}$, $j \in {\cal N}^{out} _i$ and $\epsilon_1, \epsilon_2, \epsilon_3, {\xi}_{c}>0$ such that
\begin{align*}
  &\tilde{s}_{c,i}+ p^{(2)}_i\tilde{k}_i + \sum_{j\in \mathcal{N}^{out}_i}p^{(2)}_j\tilde{l}_{ij}\le c^{(2)}_{1,i},\ \forall i \in \mathcal{V}\\
  &\tilde{k}_i + \epsilon_1 \leq \bar{k}_i,\ \forall i \in \mathcal{V} \\ 
  &\tilde{l}_{ij} + \epsilon_2 \leq \bar{l}_{ij},\ \forall i \in \mathcal{V}, \forall j \in {\cal N}^{out} _i,\\
  &({\xi}_c) ^{\frac{1}{2}}+\sum_{i\in \mathcal{C}}{p^{(2)}_i(\tilde{r}^{(2)}_{c,i}+\epsilon_3)}\tilde{s}^{-1}_{c,i}\le c^{(2)}_{2,m},\ \forall m\in \{1,\ldots, M\}\\
  &\left(\sum_{i \in \mathcal{V}}{(p^{(2)} _i)^2}{\tilde{s}^{-1}_{c,i}}\right)\left(\sum_{i \in \mathcal{V}}{(\tilde{r}^{(2)}_{c,i}+\epsilon_3)^{2}}{\tilde{s}^{-1}_{c,i}}\right) \le {\xi}_c,
\end{align*}
\req{six}, \req{seven}, \req{eight} imply that the above problem is \textit{feasible} with 
\begin{align}
    &\tilde{k}_i = \tilde{k}^{(1)}_i, \ \tilde{l}_{ij}= \tilde{l}^{(1)}_{ij}, \ \tilde{s}_{c,i} = \gamma \tilde{s}^{(1)}_{c,i}, \\
    &\epsilon_1 =\epsilon^{(1)}_1, \  \epsilon_2 =\epsilon^{(1)}_2,\ \epsilon_3 =\gamma \epsilon^{(1)}_3,\ \xi_c = \gamma^2 \xi^{(1)}_c. 
\end{align}
Thus, the feasibility of the geometric program in Proposition~2 with $p^{(1)}$ implies the feasibility of the geometric program in Proposition~2 with $p^{(2)}$. 
It can be shown in the same way that the feasibility of geometric program with $p^{(2)}$ implies the feasibility of the geometric program with $p^{(1)}$. \qedwhite 

\end{document}